\theoremstyle{plain}
\newtheorem{theorem}{Theorem}[]
\newtheorem*{theorem*}{Theorem}
\newtheorem*{"theorem"}{``Theorem''}
\newtheorem{corollary}[theorem]{Corollary}
\newtheorem{lemma}[theorem]{Lemma}
\theoremstyle{definition}
\theoremstyle{remark}
\newtheorem{remark}[theorem]{Remark}
\newtheorem{example}[theorem]{Example}
\numberwithin{equation}{section}
\newcommand{\N}{\mathbb N} 
\newcommand{\Z}{\mathbb Z} 
\newcommand{\R}{\mathbb R} 
\newcommand{\dist}{{\rm dist}}
\newcommand{\diam}{{\rm diam}}
\newcommand{\Per}{\mathrm{Per}}
\renewcommand{\H}{{\mathcal H}}
\renewcommand{\L}{{\mathcal L}}
\newcommand{\F}{{\mathcal F}}
\newcommand{\Ra} {\Rightarrow}
\renewcommand{\d}{\,\mathrm{d}}
\newcommand{\dx}{\,\mathrm{d}x}
\newcommand{\dz}{\,\mathrm{d}z}
\newcommand{\dt}{\,\mathrm{d}t}
\newcommand{\eps}{\varepsilon}
\newcommand{\average}{{\mathchoice {\kern1ex\vcenter{\hrule height.4pt
width 6pt depth0pt} \kern-9.7pt} {\kern1ex\vcenter{\hrule
height.4pt width 4.3pt depth0pt} \kern-7pt} {} {} }}
\newcommand{\avint}{\average\int}
 \DeclareMathOperator*{\argmin}{argmin}
\begin{document}

\title[]{A note on spatially inhomogeneous Cahn-Hilliard energies}

\author{Stephan Wojtowytsch}
\address{Stephan Wojtowytsch\\
Department of Mathematics\\
University of Pittsburgh\\
Pittsburgh, PA 15213
}
\email{s.woj@pitt.edu}

\date{\today}

\subjclass[2020]{49Q05, 35B27, 74Q05}
\keywords{Cahn-Hilliard energy, Ginzburg-Landau energy, Modica-Mortola functional, phase-field model, $\Gamma$-convergence, periodic homogenization, stochastic homogenization, inhomogeneous medium, high contrast medium}

\begin{abstract}
In 2023, Cristoferi, Fonseca and Ganedi proved that Cahn-Hilliard type energies with spatially inhomogeneous potentials converge to the usual (isotropic and homogeneous) perimeter functional if the length-scale $\delta$ of spatial inhomogeneity in the double-well potential is small compared to the length-scale $\varepsilon$ of phase transitions. We give a simple new proof under a slightly stronger assumption on the regularity of $W$ with respect to the phase parameter. The simplicity of the proof allows us to easily find multiple generalizations in other directions, including stochastic potentials and potentials which may become zero outside the wells (`voids'). The theoretical results are complemented by numerical experiments.

Our main message is that, across a wide variety of settings, nothing that looks sufficiently homogeneous at the transition length scale between phases, affects the limiting behavior in the slightest. The notable exception is the setting of potentials with spatially varying potential wells, where a stronger scale separation $\delta \ll \eps^{3/2}$ is needed. Based on the analysis, we further provide a modified Modica-Mortola type energy which does not require such a scale separation. 

The theoretical analysis is complemented by numerical experiments.
\end{abstract}

\maketitle


\section{Introduction}

The celebrated result of Modica and Mortola \cite{modica1977esempio, modica1987gradient} is one of the most prominent examples in the theory of $\Gamma$-convergence with applications ranging from its classical setting in solid/solid phase transitions and two-phase fluids \cite{miranville2019cahn} to image segmentation \cite{ambrosio1990approximation, dondl2018approximation} and inspiring techniques in data science \cite{luo2017convergence, bosch2018generalizing, budd2021classification}. Heuristically, it provides a notion of perimeter for diffusely defined domains in which the sharp boundary is replaced by an interfacial region. 
More precisely, let $\Omega\subseteq\R^d$ be a bounded open set for $d\geq 2$ and
\begin{equation}\label{eq modica-mortola classical}
W(u) = \frac{(u^2-1)^2}4, \qquad \F_\eps:L^1(\Omega)\to [0,\infty), \qquad  \F_\eps(u) = \int_{\Omega} \frac\eps2 |\nabla u|^2 + \frac{W(u)}\eps\dx
\end{equation}
if $u\in H^1(\Omega) \cap L^4(\Omega)$ and $\F_\eps(u) = +\infty$ otherwise. Depending on context, the functional $\F_\eps$ is referred to as the Cahn-Hilliard energy, the Ginzburg-Landau energy, or the Modica-Mortola functional.

\begin{theorem}[Modica '77, Modica-Mortola '87]\label{theorem modica mortola}
Set $c_0 = \int_{-1}^1 \sqrt{2 \,W(u)}\d u = \frac{2\sqrt 2}3$. Then:
\begin{enumerate}
\item Let $u_\eps$ be a sequence of functions such that $\liminf_{\eps\to 0}\F_\eps(u_\eps)<\infty$. Then there exists $u\in BV(\{-1,1\})$ and a subsequence of $u_\eps$ (not relabelled) such that $u_\eps\to u$ in $L^1(\Omega)$.
\item $u_\eps$ converges to the $c_0$-fold of the perimeter functional in the sense of $\Gamma(L^1)$-convergence:
\begin{itemize}
\item If $u_\eps \to u$ in $L^1(\Omega)$, then $\liminf_{\eps\to 0} \F_\eps(u_\eps) \geq c_0\cdot \Per(\{u=1\})$ and
\item for every $u\in BV(\Omega;\{-1,1\})$ exists a sequence $u_\eps\in H_0^1(\Omega)\cap L^4(\Omega)$ such that $u_\eps \to u$ in $L^1(\Omega)$ and $\limsup_{\eps\to 0}\F_\eps(u_\eps)\leq c_0\cdot \Per_\Omega(\{u=1\})$ where $\Per_\Omega$ denotes the perimeter relative to $\Omega$.
\end{itemize}
\end{enumerate}
\end{theorem}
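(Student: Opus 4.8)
The plan is to prove both assertions along the classical route: the compactness in (1) and the $\Gamma$-$\liminf$ inequality follow from Modica's change-of-variables trick, and the $\Gamma$-$\limsup$ inequality from an explicit recovery sequence obtained by transporting the optimal one-dimensional transition profile along the signed distance to the interface.

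\emph{Lower bound and compactness.} Set $\phi(t):=\int_0^t\sqrt{2\,W(s)}\,\ds$; since $W\ge0$ vanishes only at $\pm1$ and grows quadratically, $\phi$ is an odd, strictly increasing continuous bijection of $\R$ with $\phi(1)=\tfrac{c_0}{2}$ and $|\phi(t)|\lesssim 1+|t|^3$. By the chain rule for Sobolev functions and Young's inequality $\tfrac\eps2 a^2+\tfrac1\eps b^2\ge\sqrt2\,ab$,
\[
\F_\eps(u)\;\ge\;\int_\Omega\sqrt{2\,W(u)}\,|\n u|\,\dx\;=\;\int_\Omega|\n(\phi\circ u)|\,\dx\;=\;|D(\phi\circ u)|(\Omega).
\]
Now assume $\liminf_\eps\F_\eps(u_\eps)<\infty$ and pass to a subsequence with $\F_\eps(u_\eps)\le C$. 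Then $\int_\Omega W(u_\eps)\le C\eps\to0$, so $u_\eps^2\to1$ in $L^2(\Omega)$, $u_\eps$ is bounded in $L^4(\Omega)$, and $\phi\circ u_\eps$ is bounded in $W^{1,1}(\Omega)\hookrightarrow BV(\Omega)$; hence along a further subsequence $\phi\circ u_\eps\to v$ a.e. Applying the continuous inverse $\phi^{-1}$ gives $u_\eps\to u:=\phi^{-1}(v)$ a.e., Vitali's theorem (using the $L^4$-bound) upgrades this to convergence in $L^1(\Omega)$, and Fatou's lemma forces $\int_\Omega W(u)=0$; together with the uniform $BV$-bound on $\phi\circ u_\eps$ this yields $u\in BV(\Omega;\{-1,1\})$, proving (1). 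Consequently $v=\phi(u)$ takes only the values $\pm\tfrac{c_0}{2}$, so $|Dv|(\Omega)=c_0\,\Per_\Omega(\{u=1\})$, and lower semicontinuity of the total variation under $L^1$-convergence combined with the displayed inequality gives $\liminf_\eps\F_\eps(u_\eps)\ge c_0\,\Per_\Omega(\{u=1\})$ whenever $u_\eps\to u$ in $L^1(\Omega)$ (the inequality being trivial when the left-hand side is $\infty$).

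\emph{Recovery sequence.} I would first reduce to $E:=\{u=1\}$ having smooth boundary in $\Omega$: by the standard approximation of finite-perimeter sets one picks smooth $E_k$ with $\mathbf 1_{E_k}\to\mathbf 1_E$ in $L^1(\Omega)$ and $\Per_\Omega(E_k)\to\Per_\Omega(E)$, builds recovery sequences for each, and diagonalizes. For smooth $E$, let $d:=\sdist(\cdot,\del E)$ --- smooth with $|\n d|=1$ on a tubular neighborhood of $\del E$ --- and let $\gamma$ solve $\gamma'=\sqrt{2W(\gamma)}$, $\gamma(0)=0$; for $\eta>0$ truncate $\gamma$ to a Lipschitz profile $\gamma_\eta$ attaining $\pm1$ at $\pm T_\eta$ with $\int_{-T_\eta}^{T_\eta}\big(\tfrac12|\gamma_\eta'|^2+W(\gamma_\eta)\big)\,\dt\to c_0$ as $\eta\to0$. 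Put $u_\eps(x):=\gamma_\eta(d(x)/\eps)$ on $\{|d|<\eps T_\eta\}$ and $u_\eps(x):=\sign(d(x))$ elsewhere; then $u_\eps\in H^1(\Omega)\cap L^\infty(\Omega)$, $u_\eps\to u$ in $L^1(\Omega)$ as the transition layer collapses, and the coarea formula for $d$ together with $|\n d|=1$ gives
\[
\F_\eps(u_\eps)=\int_{-T_\eta}^{T_\eta}\Big(\tfrac12|\gamma_\eta'(t)|^2+W(\gamma_\eta(t))\Big)\,\H^{d-1}\big(\{d=\eps t\}\cap\Omega\big)\,\dt .
\]
As $\eps\to0$ the right-hand side converges to $\big(\int_{-T_\eta}^{T_\eta}\tfrac12|\gamma_\eta'|^2+W(\gamma_\eta)\big)\,\H^{d-1}(\del E\cap\Omega)$, since $\H^{d-1}(\{d=\eps t\}\cap\Omega)\to\H^{d-1}(\del E\cap\Omega)$ uniformly for $|t|\le T_\eta$; letting $\eta\to0$ and diagonalizing over $(\eps,\eta)$ and over $k$ produces a sequence with $\limsup_\eps\F_\eps(u_\eps)\le c_0\,\Per_\Omega(\{u=1\})$.

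I expect the main obstacle to be the recovery sequence rather than the lower bound: the $\liminf$ inequality and compactness reduce to one application of Young's inequality plus soft compactness and lower semicontinuity, whereas the $\limsup$ requires the reduction to smooth interfaces, the truncation of the optimal profile (which reaches $\pm1$ only asymptotically) in such a way that its one-dimensional energy still converges to $c_0$, the coarea computation on the shrinking tubular neighborhood, and the careful book-keeping of the nested limits $\eps\to0$, $\eta\to0$, $k\to\infty$ via a diagonal argument.
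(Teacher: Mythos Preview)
The paper does not actually prove Theorem~\ref{theorem modica mortola}; it is cited as the classical result of Modica and Modica--Mortola and serves only as background. There is therefore no ``paper's own proof'' to compare against. Your argument is the standard one and is correct: the Modica trick $\tfrac\eps2|\nabla u|^2+\tfrac1\eps W(u)\ge|\nabla(\phi\circ u)|$ combined with $BV$-compactness and lower semicontinuity for the $\liminf$ and compactness, and the optimal-profile recovery sequence along the signed distance to a smooth approximating interface for the $\limsup$.

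For what it is worth, the paper's proof of its main result (Theorem~\ref{theorem main}) uses exactly the same skeleton in the homogeneous pieces of the argument: an antiderivative $G_{hom}$ of $\sqrt{2W_{hom}}$ and $BV$-compactness for Step~1, lower semicontinuity of the total variation for Step~2, and the profile $\phi$ solving $\phi'=\sqrt{2W_{hom}(\phi)}$ transported along $sd_E$ after reducing to $C^2$ sets for Step~3. So your approach is fully aligned with the paper's methodology. One cosmetic point: the statement as written asks for $u_\eps\in H_0^1(\Omega)$, which your construction (with $u_\eps=\pm1$ away from the interface) does not satisfy; this appears to be a slip in the statement, since the conclusion involves $\Per_\Omega$ rather than $\Per$ and the paper treats Dirichlet conditions separately later.
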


Intuitively, the double-well contribution $W(u)/\eps$ to the integrand in $\F_\eps$ forces $u$ to take values mostly in the `potential wells' at $\pm 1$, while the gradient contribution penalizes fast transitions. The terms balance precisely when $u$ transitions between the wells on a length scale $\eps$, and the energy is proportional to the area of the transition region. The constant $c_0$ is found by considering an optimal transition shape on the correct length scale. While we stated it for concrete $W$ for simplicity, the result holds for more general double-well potentials $W$ \cite{modica1987gradient}.

For an introduction to the Modica-Mortola functional and the theory of $\Gamma$-convergence in general, see e.g.\ \cite{braides2002gamma, dal2012introduction}. For an overview of $BV$-functions in general and sets of finite perimeter in particular, see e.g.\ \cite{giusti1984functions, ambrosio2000functions, evans2018measure}. 

The Ginzburg-Landau functional has seen extensive use both as a mathematical model and a computational tool in varied settings. Many generalizations of the functional haven been considered for vector-valued phase-parameters $u_\eps$ with multiple potential wells \cite{boyer2010cahn, bosch2015preconditioning, laux2018convergence, fischer2022quantitative}, higher order geometric energies \cite{du2005phase, roger2006modified, bretin2015phase, dondl2017phase, bellettini2023conjecture}, non-local versions of the squared gradient \cite{alberti1998phase, garroni2006singular, garroni2005gamma, garroni2006variational, cozzi2017nonlocal, dondl2019effect} and the convergence of the $L^2$- and $H^{-1}$-gradient flows of $\F_\eps$ to their respective sharp interface limits \cite{ilmanen1993convergence, mugnai2011convergence, laux2018convergence, fischer2020convergence, fischer2022quantitative, pego1989front, stoth1996convergence, garcke2005mechanical, novick2008cahn, hensel2024weak}.
In the context of the continuum mechanics of non-homogeneous media, there is an interest in extensions of the Modica-Mortola energy which are not isotropic and homogeneous \cite{ansini2003gradient, hagerty2018note, cristoferi2023homogenization_delta_small, cristoferi2023homogenization_delta_large}. A simple such model is
\begin{equation}\label{eq Feps}
\F_{\eps,\delta} (u) = \int_{\Omega} \frac\eps2 |\nabla u|^2 + \frac{W\big(x/\delta,\: u\big)}\eps\dx
\end{equation}
where $W$ is a double-well potential in $u$ and periodic in the spatial variable, i.e.

\begin{enumerate}
\item $W(z+ n, u) = W(z,u)$ for all $n\in \N^d$.
\item $W(z,u)\geq 0$ and $W(z,u) = 0$ if and only if $u=\pm 1$.
\end{enumerate}

Of the many anisotropic and/or inhomogeneous generalizations of \eqref{eq modica-mortola classical}, this one is attractive from a computational point of view since it retains the isotropic and quadratic structure of the highest-order term, simplifying numerical calculations. It is in particular more computationally tractable than a generalization where the spatial inhomogeneity affects the gradient such as explored in \cite{ansini2003gradient}.

Intuitively, there are three regimes to this model: 

\begin{enumerate}
\item $\delta \ll \eps$. In this case, homogenization of the spatial oscillations in the heterogeneous medium happen much faster than the collapse of the diffuse interface. Functions of low energy $u$ are expected to appear constant at the length-scale $\delta$, suggesting that the model is equivalent to a Modica-Mortola model with a homogenized potential 
\begin{equation}\label{eq whom}
W_{hom}(u) = \int_{[0,1]^d} W(z,u)\dz.
\end{equation}

\item $\eps\ll \delta$. In this case, the diffuse interface collapses to a hyper-surface much faster than the inhomogeneities on the spatial scale. It is expected that $\F_{\eps,\delta}$ resembles an isotropic sharp-interface perimeter in an inhomogeneous medium at the small length-scale $\delta\ll 1$, and a sharp-interface perimeter in a homogeneous but anisotropic medium at macroscopic length scales -- compare, e.g.\ \cite{ansini2010approximation}.

\item $\delta\sim\eps$. This is the most complicated case, as both effects occur at the same length scale and have to be considered simultaneously. Also here, the limiting functional is an anisotropic perimeter functional on the macroscopic scale.

\end{enumerate}

The final regime was resolved by in \cite{cristoferi2019homogenization, cristoferi2020erratum}, along with partial results in the first regime under the stronger assumption that $\delta\ll \eps^{3/2}$ \cite{hagerty2018note}. In full generality, the convergence result was obtained by Cristoferi, Fonseca and Ganedi in \cite{cristoferi2023homogenization_delta_small} for the first regime and for a different model in the second regime \cite{cristoferi2023homogenization_delta_large}. In the special case of a separable potential $W(x,u) = w(x)\,\widetilde W(u)$, the second regime has been studied by \cite{irenechoksiraghav}.

\begin{theorem}[Hagerty '18, Cristoferi, Fonseca and Ganedi '22: $\delta \ll \eps$]\label{theorem cfg}
Assume that $W$ is a double-well potential in $u$ in the sense that $W(x,u)\geq 0$ and $W(x, u) = 0$ if and only if $u=\pm 1$. Assume further that $W$ is measurable and one-periodic in $x$ (for fixed $u$), continuous in $u$ (for fixed $x$), and satisfies $W(x,u) \geq \gamma|u|$ if $|u|\geq M$ for some $M, \gamma>0$. Finally, assume that $\sup_{x\in \R^d, |u|\leq M} |W(x,u)| <\infty$. 

Let $\delta_\eps \ll \eps$ and $\F_\eps:= \F_{\eps,\delta_\eps}$ as in \eqref{eq Feps}.
Define $W_{hom}$ by \eqref{eq whom} and 
\begin{equation}\label{eq fhom}
\F_{\eps, hom} :L^1(\Omega) \to [0,\infty), \qquad  \F_{\eps,hom}(u) = \begin{cases} \int_\Omega \frac\eps2\,|\nabla u|^2 + \frac{W_{hom}(u)}\eps \dx & u \in H^1(\Omega)\\ +\infty &\text{else}\end{cases}.
\end{equation}
Then the following are true.
\begin{enumerate}
\item Let $u_\eps$ be such that $\liminf_{\eps\to 0} \F_\eps(u_\eps) < \infty$. Then there exists a subsequence of $u_\eps$ (not relabeled) and $u\in BV(\R^d, \{-1,1\})$ such that $u_\eps \to u$ in $L^1(\Omega)$.

\item $\Gamma(L^1)-\lim_{\eps\to 0} \F_\eps = \Gamma(L^1)-\lim_{\eps\to 0} \F_{\eps,hom} = c_{hom} \cdot \Per_\Omega$ for  $c_{hom} = \int_{-1}^1 \sqrt{2\,W_{hom}(u)}\d u$ where $\Per_\Omega$ denotes the perimeter relative to $\Omega$.
\end{enumerate}
\end{theorem}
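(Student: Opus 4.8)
The plan is to reduce everything to the classical Modica--Mortola theorem (Theorem~\ref{theorem modica mortola}) applied to the homogenized functional $\F_{\eps,hom}$, whose $\Gamma$-limit is $c_{hom}\cdot\Per_\Omega$ by a routine adaptation of \cite{modica1987gradient} (the potential $W_{hom}$ is a genuine double well under the stated hypotheses). Thus it suffices to show $\Gamma(L^1)\text{-}\lim \F_\eps = \Gamma(L^1)\text{-}\lim\F_{\eps,hom}$, which I would split into the compactness statement (1), a $\liminf$ inequality $\Gamma\text{-}\liminf\F_\eps \geq c_{hom}\cdot\Per_\Omega$, and a $\limsup$ inequality producing a recovery sequence with $\limsup\F_\eps(u_\eps)\leq c_{hom}\cdot\Per_\Omega(\{u=1\})$.

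For the $\limsup$ inequality, the key observation is that $W(x/\delta,u)$ is pointwise \emph{larger} on average but not pointwise comparable to $W_{hom}(u)$, so one cannot simply use an $\F_{\eps,hom}$-recovery sequence directly. Instead I would first take the standard one-dimensional optimal-profile recovery sequence $\bar u_\eps$ for $\F_{\eps,hom}$ built from the ODE $\sqrt{2W_{hom}}$-ansatz near $\partial\{u=1\}$, for which $\F_{\eps,hom}(\bar u_\eps)\to c_{hom}\Per_\Omega(\{u=1\})$. The difference $\F_\eps(\bar u_\eps)-\F_{\eps,hom}(\bar u_\eps) = \frac1\eps\int_\Omega \big(W(x/\delta_\eps,\bar u_\eps(x)) - W_{hom}(\bar u_\eps(x))\big)\dx$ must be shown to vanish; here one writes the integrand as $g(x/\delta_\eps, \bar u_\eps(x))$ with $g(z,\cdot)$ having zero average in $z$, and exploits that $\bar u_\eps$ is essentially a function of $\dist(x,\partial\{u=1\})/\eps$ that varies slowly (Lipschitz constant $O(1/\eps)$) compared to the $\delta_\eps$-periodic oscillation. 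Slicing along level sets of $\dist(\cdot,\partial\{u=1\})$ (coarea formula) and applying the Riemann--Lebesgue-type cancellation on each slice — using $\delta_\eps/\eps\to 0$ so that over a scale on which $\bar u_\eps$ is nearly constant there are $\gg 1$ periods of oscillation — gives a bound of the form $C\,\delta_\eps/\eps \cdot \Per_\Omega(\{u=1\})\cdot \sup|W|\to 0$. The requirement $\delta_\eps \ll \eps$ enters exactly here.

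For the $\liminf$ inequality I would \emph{not} try to prove $\F_\eps \geq \F_{\eps,hom} - o(1)$ on arbitrary sequences; rather, the cleanest route is the classical slicing/localization argument of Modica, adapted so that the pointwise potential $W_{hom}(u) \leq W(x/\delta,u)$ can be replaced — in the lower bound — by its \emph{average} via a two-scale argument. Concretely, localize on a small cube $Q_r(x_0)$, subdivide it into $\sim (r/\delta_\eps)^d$ periodicity cells, and on a cell observe $\int_{\text{cell}} W(x/\delta_\eps,u)\dx \geq \int_{\text{cell}} W_{hom}(\text{average of }u\text{ over cell})\dx$ up to an error controlled by the oscillation of $u$ over the cell, which in turn is controlled by the gradient term; since $\delta_\eps\ll\eps$ and the Modica bound already forces $\int\eps|\nabla u_\eps|^2$ bounded, the cell-oscillation error is lower order. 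This effectively shows $\liminf\F_\eps(u_\eps)\geq \liminf\F_{\eps,hom}(v_\eps)$ for a suitable modification $v_\eps$ (e.g.\ a cell-averaged version of $u_\eps$ with the same $L^1$ limit), and then the classical Modica--Mortola $\liminf$ gives $\geq c_{hom}\Per_\Omega(\{u=1\})$. Compactness (statement (1)) follows from the same reasoning: $\F_\eps(u_\eps)$ bounded implies, after the cell-averaging comparison, $\F_{\eps,hom}(v_\eps)$ bounded, and the classical compactness result yields the convergent subsequence.

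The main obstacle I anticipate is making the two-scale cancellation rigorous in both directions simultaneously with the correct bookkeeping of error terms in $\delta_\eps/\eps$ — in particular controlling the oscillation of a general low-energy $u_\eps$ over a single $\delta_\eps$-cell in the $\liminf$ step (where one only has an $H^1$ bound weighted by $\eps$, not by $\eps/\delta_\eps$), and ensuring the cell-averaging operation does not create spurious gradient energy. I expect the paper's "simple new proof" under a slightly stronger regularity assumption on $W$ in $u$ (e.g.\ Lipschitz or Hölder in $u$ uniformly in $x$) precisely sidesteps this by making the comparison $|W(x/\delta,u)-W(x/\delta,v)|\leq C|u-v|^\alpha$ available, turning the oscillation errors into elementary estimates rather than requiring a delicate compactness argument.
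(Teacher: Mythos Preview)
Your $\limsup$ argument is essentially the paper's: take the optimal-profile recovery sequence for $\F_{\eps,hom}$, write the difference as an integral of a zero-mean periodic function against a $C/\eps$-Lipschitz phase variable, and use $\delta_\eps/\eps\to 0$ to kill it. The paper does exactly this (Step~3 of Theorem~\ref{theorem main}), splitting into interfacial and bulk cells and using the uniform Lipschitz bound on $W_\delta(x,\cdot)$ to control $|W_\delta(x,u^\eps(x))-W_\delta(x,u^\eps(x_i))|$ on each cell.

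Your $\liminf$ proposal, however, only recovers Hagerty's partial result $\delta\ll\eps^{3/2}$, not the full statement $\delta\ll\eps$. The cell-averaging step you describe is precisely Lemma~\ref{lemma eps32} of the paper: on a cell $Q_i^\delta$, replace $W_\delta(x,u)$ by $W_\delta(x,\langle u\rangle)$ at cost $L|u-\langle u\rangle|$, then Young's inequality plus Poincar\'e on the cell gives an error $\sim \delta^2/(\alpha\eps^3)\,|Q_i^\delta|$ while eating $\alpha\eps\|\nabla u\|^2$ from the gradient term. Summed over \emph{all} cells this error is $\delta^2/(\alpha\eps^3)\,|\Omega|$, and sending $\alpha\to 0$ forces $\delta^2/\eps^3\to 0$. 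The obstacle you anticipate (``controlling the oscillation of a general low-energy $u_\eps$ over a single $\delta_\eps$-cell \dots\ where one only has an $H^1$ bound weighted by $\eps$'') is exactly this scaling deficit, and the Lipschitz hypothesis on $W$ in $u$ alone does \emph{not} cure it.

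The paper's missing ingredient is a \emph{minimizing movements} step (Lemma~\ref{lemma liminf}): replace $u_\eps$ by $\hat u_\eps\in\argmin_v\,\F_{\eps,\delta}(v)+\frac1{2\tau}\|v-T_Mu_\eps\|_{L^2}^2$ with $\tau\sim\eps$. This lowers energy, stays $L^1$-close to $u_\eps$, and---via the Euler--Lagrange equation and $W^{2,p}$ elliptic regularity on balls of an intermediate scale $\rho$ with $\delta\ll\rho\ll\eps$---makes $\hat u_\eps$ uniformly H\"older continuous on scale $\rho$. Now the interfacial set $\{|\hat u_\eps|<\gamma\}$ is a \emph{union of whole cells} of total measure $\lesssim\eps$ (since the energy density there is $\gtrsim 1/\eps$), and one applies the cell-averaging estimate \emph{only on those cells}, gaining a factor $\eps$: the error becomes $(\delta/\eps)^2\cdot\eps^{-1}\cdot\eps=(\delta/\eps)^2\to 0$. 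Outside the interface $W_\delta\geq 0$ (or $\geq -C\delta$) handles the bulk for free. Your cell-averaged $v_\eps$ idea does not supply this localization because an arbitrary $H^1$ function can dip into $(-\gamma,\gamma)$ on every cell without paying visible energy; the minimizing movements regularization is what rules that out.
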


In this article, we give a simple alternative proof of Theorem \ref{theorem cfg} under a different set of technical assumptions on $W$. Intuitively, we require higher regularity of $W$ in the phase parameter $u$, but relax other assumptions. We believe the proof technique to be of independent interest. As usual, the key problem is the $\Gamma-\liminf$ inequality. Essentially, rather than bounding $\liminf_{\eps}\F_\eps(u_\eps)$ from below for a general sequence $u_\eps$, we show that we only need to study the inequality for functions which satisfy good quantitative regularity bounds on short length scales. This is achieved by taking a short minimizing movements step from $u_\eps$ to $\hat u_\eps$, which reduces the value of $\F_\eps$ and increases the regularity while barely changing $u_\eps$ in $L^1(\Omega)$. With higher regularity, we find that it suffices to focus on the interfacial region, where we can establish the $\Gamma-\liminf$ inequality.

The simplicity of the proof allows for easy generalizations beyond the setting of Theorem \ref{theorem cfg}, for instance to the setting of stochastic rather than periodic homogenization and homogenization with multiple length-scales $\delta_1,\dots,\delta_n$. Partial results hold in the setting of inhomogeneous media whose potential wells may not be compatible in different phases, i.e.\ for potentials $W(x,u)$ whose minimizers in the $u$-variable may depend on $x$. 

The technical assumptions on $W$ are gathered in Section \ref{section assumptions}, along with examples of admissible potentials $W$ and a comparison to \cite{cristoferi2023homogenization_delta_small} in Section \ref{section examples}. Our version of Theorem \ref{theorem cfg} is stated precisely and proved in Section \ref{section main}, along with several variations and extensions. We provide some numerical illustrations in Section \ref{section numerical}.

\subsection{Notations}

By $B_r(x)$, we denote the Euclidean ball of radius $r$ around a center $x$. If $x=0$, we omit it: $B_r := B_r(0)$. The Lebesgue measure of a set $B$ is denoted by $|B|$. The Euclidean norm is denoted by $|\cdot|$ and the distance to a set $C$ is denoted as $\dist(x, C) = \inf_{y\in C}|x-y|$. For a set $E$, we consider the signed distance function $sd_E(x) = \dist(x, E^c) - \dist(x, E)$. If $u$ is in a space $X$ of functions (e.g.\ $W^{1,p}$) from $\Omega$ to $\R$ and $u$ takes values in a set $S\subset \R$, then we write $u\in X(\Omega; S)$. We adopt the convention that $C$ is a generic constant which does not depend on any parameters under consideration and whose value may change from line to line.

\section{Setting}

\subsection{Assumptions}\label{section assumptions}

In this section, we list our assumptions on the domain $\Omega$ and the potentials $W_\delta$ and $W_{hom}$. 
Let $\Omega\subseteq \R^d$ be open and bounded. 
We assume that for every $\delta>0$, there exist $N_{\delta} \in \mathbb N$ and a collection of open sets $Q_{i}^{\delta} \subseteq \Omega$ such that

\begin{enumerate}
\item $Q_i^\delta \cap Q_j^\delta = \emptyset$ for all $i\neq j$.
\item The sets $Q_i^\delta$ are regular on length scale $\delta$:
\begin{enumerate}
\item There exists $R>0$ such that $\diam(Q_i^\delta)\leq R\delta$ for all $\delta, i$ and
\item There exists $c_P>0$ such that the Poincar\'e inequality
\[
\int_{Q_i^\delta} \big|u - \langle u\rangle_{Q_i^\delta}\big|^2\dx \leq c_P\delta^2\int_{Q_i^\delta}\|\nabla u\|^2\dx
\]
holds for all $i, \delta$ where $\langle u\rangle_{Q_i^\delta}:= \frac1{|Q_i^\delta|}\int_{Q_i^\delta}u\dx$.

\end{enumerate}
\item $\L^d\big(\Omega\setminus \bigcup_{i=1}^{N_\delta} Q_i^\delta\big) =0$.
\end{enumerate}

If all $Q_i^\delta$ are e.g.\ hypercubes of side-length $\sim \delta$ (or e.g.\ hexagons or another regular shape which tesselates the plane), the uniform Poincar\'e inequality holds by scaling. In principle, the condition could be dropped for $Q_i^\delta$ within distance $\sim\delta$ to the boundary if we assume the lower bound on $W$ rather than the scale separation. Otherwise, the condition can be read as shape (and thus also boundary) regularity for the sets $Q_i^\delta$.

For all $\delta>0$, we consider a spatially varying double-well potential $W_\delta:\Omega\times\R\to\R$ and a `homogenized' double-well potential $W_{hom}:\R\to\R$. We make the following assumptions on $W_\delta$.

\begin{enumerate}\setcounter{enumi}{3}

\item $W_\delta$ is jointly measurable as a function of $x$ and $u$.

\item\label{condition scale separation}
 One of the following holds:
\begin{enumerate}
\item There exist $C >0$ and $\gamma \in (0,1)$ such that $W_\delta(x,u) \geq -C\delta$ if $|u|\geq \gamma$ or
\item We assume that the scale separation $\delta \ll \eps^{3/2}$ holds below. 
\end{enumerate}

\item There exists $M\geq 1$ such that $W_\delta(x,u) \geq W_\delta (x, \:M\,u/|u|)$ if $|u|\geq M$.

\item \label{condition lipschitz}
There exist $p\geq 2$ (for $d\leq 3$) and $p>d/2$ (for $d\geq 4$) and $L_\delta \in L^p(\Omega; (0,\infty))$ such that $W_\delta$ satisfies the Lipschitz condition
\[
|W_\delta(x,u_1) - W_\delta (x, u_2)|\leq L_\delta(x)\,|u_1-u_2|\qquad \forall\ x\in\Omega, \quad u_1, u_2\in [-M, M].
\] 
Furthermore, the bound
\[
\frac1{|Q_i^\delta|} \int_{Q_i^\delta} L_\delta(x)^p \dx \leq C
\]
holds for all $\delta>0$ and $i= 1, \dots, N_\delta$.
\end{enumerate}

For $W_{hom}$, we assume the following.

\begin{enumerate}\setcounter{enumi}{7}
\item $W_{hom}$ is Lipschitz-continuous in $[-1,1]$.
\item $W_{hom}(-1) = W_{hom}(1) = \min_{u\in\R} W_{hom}(u) = 0$.
\item $W_{hom}(u)>0$ for $z\in (-1,1)$ and there exist $\gamma\in (0,1)$ such that $W_{hom}$ is monotone decreasing on $(\gamma,1)$ and monotone increasing on $(-1,-\gamma)$.
\end{enumerate}

Finally, we require a compatibility condition for $W_\delta$, $W_{hom}$ and $Q_i^\delta$:

\begin{enumerate}\setcounter{enumi}{10}    
\item Let $\mathcal U_\delta$ denote the set of vectors $u\in \R^{N_\delta}$ such that $u_i\in [-M, M]$ for all $i=1,\dots, N_\delta$.  $W_\delta$ satisfies
\begin{equation}\label{eq homogenization bound}
\sup_{u\in \mathcal U_\delta}  \sum_{i=1}^{N_\delta} \left|\int_{Q^\delta_i} W_\delta(x, u_i) - W_{hom}(u_i)\dx \right| \leq C\delta.
\end{equation}
\end{enumerate}

Essentially, $\mathcal U_\delta$ is a representation of functions which are constant on every $Q_i^\delta$ in the partition of $\Omega$. The compatibility condition therefore deals with functions which are constant on small scales. 

The scale separation in Condition \eqref{condition scale separation} was first considered in \cite{hagerty2018note} as a technical tool. In the present setting, we manage to relax it for a large class of potentials, and we show its necessity in other cases. The precise scaling can be understood from Example \ref{example voids} and Lemma \ref{lemma eps32} below. Heuristically, it stems from a term capturing small scale oscillations which has magnitude $\delta^2/\eps^2 \cdot 1/\eps$. In most models, such a term would be small and only relevant in the vicinity of the transition region where $u$ is not close to pure phase $\pm 1$, which has measure $\sim \eps$. However, if $W_\delta$ can be negative close to pure phase, there may be a bulk energy contribution and the condition $\delta^2/\eps^3 \ll 1$ is required.

We define the functionals $\F_{\eps, \delta}$ and $\F_{\eps, hom}$ as
\begin{equation}\label{eq our modica mortola functionals}
\F_{\eps,\delta}(u) = \int_\Omega \frac\eps2 \|\nabla u\|^2 + \frac{W_\delta(x,u)}\eps\dx, \qquad \F_{\eps, hom}(u) = \int_\Omega \frac\eps2 \|\nabla u\|^2 + \frac{W_{hom}(u)}\eps\dx.
\end{equation}

\subsection{Discussion and Examples}\label{section examples}

The assumptions in \cite{cristoferi2023homogenization_delta_small} are somewhat different. In both settings, $W_\delta$ is a measurable function of $x$ and $u$, but here we assume $W_\delta$ to be Lipschitz-continuous in $u$ while \cite{cristoferi2023homogenization_delta_small} only requires continuity in $u$. On the other hand, \cite{cristoferi2023homogenization_delta_small} makes two strong assumptions which we relax significantly:

\begin{enumerate}
\item $W_\delta(x, u) = \widetilde W(x/\delta, u)$ where $\widetilde W$ is $1$-periodic in all coordinate directions in $x$.

\item $W_\delta\geq 0$ and $W_\delta(x,u) = 0$ if and only if $u= \pm 1$, independently of $x$.
\end{enumerate}

By comparison, we are able to deal with structures which are not necessarily periodic in space, but may have a sufficiently regular stochastic structure. We also allow potentials $W_\delta$ whose minimizers are attained at different points as in \cite{cristoferi2023homogenization_delta_small}, and we allow a setting with `voids' where $W_\delta(x, \cdot) \equiv 0$ for $x$ in an open set.

\begin{example}[Multiple materials and voids]\label{example voids}
We can consider potentials of the form $W_\delta(x,u) = W(x/\delta, u)$ where
\[
W(x,u) = a(x)\,\big(u^2 - b(x)\big)^2 - c(x)
\]
and $a,b,c$ are $1$-periodic functions (or more generally, periodic with respect to a lattice). The homogenized potential is
\begin{align*}
W_{hom}(u) &= \left(\int_{(0,1)^d} a(x)\dx\right) \,u^4 -2\left(\int_{(0,1)^d}a(x)b(x) \dx\right)\,u^2 + \int_{(0,1)^d} a(x)b^2(x) - c(x)\dx\\
	&= \left(\int_{(0,1)^d} a(x)\dx\right) \big(u^2-1\big)^2
\end{align*}
if the coefficient functions $a,b,c$ satisfy
\[
\int_{(0,1)^d}ab\dx = \int_{(0,1)^d} a b^2-c\dx =  \int_{(0,1)^d}a\dx.
\]
The case where $a, b, c$ are piecewise constant functions is of particular interest and can be thought of as a model for several hetereogeneous phases if $b$ is non-constant (see e.g.\ \cite{cristoferi2023homogenization_delta_large}), or as a model for a material with voids if $b\equiv 1$, $c\equiv 0$ and $a$ takes values in $\{0,1\}$.

If $b\equiv 1$, we can select $c\equiv 0$ and Condition \eqref{condition scale separation} is met as $W_\delta\geq 0$. Consider on the other hand the one-dimensional model where $a\equiv 1$,
\[
b(x) = \frac12\,\chi(x) + \frac32\,\big(1-\chi(x)\big), \qquad c = \frac12\bigg(\left(\frac32\right)^2 + \left(\frac12\right)^2\bigg)-1 = \frac{9}8 + \frac18 -1= \frac 14.
\] 
and $\chi$ is the 1-periodic extension of $1_{(0, 1/2)}$. In this case $W_\delta(x, \pm 1) = -1/4 <0$ and Condition \eqref{condition scale separation} is not met unless $\delta \ll \eps^{3/2}$. Let us illustrate that this is indeed necessary. Consider 
\[
\phi_{\eps, \delta}:(0,1)\to\R, \qquad \phi_{\eps,\delta}(x) = 1 - \left(\frac\delta\eps\right)^2\,\psi\left(\frac x\delta\right)
\]
where $\psi$ is a smooth and $1$-periodic function which is positive on $(1/2, 1)$ and negative on $(0,1/2)$. By Taylor expansion, we have
\begin{align*}
\int_0^1 &\frac\eps2\,\big|\phi_{\eps,\delta}(x)\big|^2 + \frac{W(x/\delta, \,\phi_{\eps,\delta}(x))}\eps\dx\\
	&= \int_0^1 \frac\eps2\left(\frac\delta\eps\right)^4\,\frac1{\delta^2}\big|\psi'\big|^2\left(\frac x\delta\right) +\frac1\eps\left( \frac14- \frac14 + 2\left(1 -b(x/\delta)\right) \left(\frac\delta\eps\right)^2\,\psi\left(\frac x\delta\right) + O\left((\delta/\eps)^4\right)\right) \dx \\
	&= \frac{\delta^2}{\eps^3} \int_0^1\left( \frac12\,|\psi'|^2-|\psi|\right)(x/\delta)\dx +O\left(\frac{\delta^4}{\eps^5}\right).
\end{align*}
Scaling $\psi$ by a small positive factor $\alpha$, we guarantee that $\int \frac12 (\alpha\psi')^2 - |\alpha\psi|\dx < 0$, meaning that the energy is not bounded from below uniformly if $\eps^{3/2}\ll \delta \ll \eps$. Thus in this case, the scale separation $\delta\ll \eps^{3/2}$ must be assumed in Condition \eqref{condition scale separation}. See also Lemma \ref{lemma eps32} and Corollary \ref{corollary modified modica mortola} for further insight.
\end{example}

All results apply if the average of $W$ over a `periodic cell' differs from $W_{hom}$ by $O(|Q_i^\delta|\cdot \delta)$, for instance if there is no $u\in\R$ such that the average of $W_1(x,u)$ over a period cell vanishes exactly, e.g.\ because $a_\delta(x), c_\delta(x)$ mildly depend on $\delta$ or because the potential wells vary as $b_\delta(x) = 1+ O(\delta)$. See also Remark \ref{remark multiple length scales} about potentials with multiple incompatible length scales $\delta_1,\dots, \delta_n$.

\begin{example}[Stochastic Materials]\label{example stochastic materials}
Assume for simplicity that $\Omega = (0,1)^d$ and $\delta = \delta_n = 1/n$. For the collection of small sets covering $\Omega$, we choose cubes of side-length $\delta$: $Q_i^\delta = \prod_{l=1}^d (i_l\delta, (i_l+1)\delta)$ where $i\in \{0,1,\dots, n-1\}^d$. We further partition every cube $Q_i^\delta$ into $m^d$ smaller cubes $Q_{ij}^\delta$ of side length $\delta/m$. For every one of these smaller cubes, we choose a random variable $q_{ij}^\delta$ which takes values in $[0,1]$ with expected value $p\in(0,1)$ and we set $W_\delta(x,z) = q_{ij}^\delta\,\widetilde W(u) $ for $x\in Q_{ij}^\delta$. We may choose any locally Lipschitz-continuous double-well potential $\widetilde W$, e.g. $\widetilde W(u) = |1-u^2|$ or similar. The `homogenized potential' is $W_{hom}(u) = p\,\widetilde W(u)$. We have 
\[
\frac1{|Q_i^\delta|} \int_{Q_i^\delta} W_\delta(x,u) \dx - W_{hom}(u) = \left(\frac1{m^d}\sum_jq_{ij}^\delta - p\right)\,|1-u^2|.
\]
We conclude that
\begin{align*}
\sup_{u\in \mathcal U_\delta}\left|\sum_i\int_{Q_{i}^\delta} W_\delta(u_i) - W_{hom}(u_i)\dx \right| &\leq \sum_i |Q_i^\delta|\,\left|\frac1{m^d}\sum_jq_{ij}^\delta - p\right|
	= \frac1{n^d} \sum_i \left|\frac1{m^d}\sum_jq_{ij}^\delta - p\right|.
\end{align*}
We will show that this term is small, both in expectation and with high probability. The techniques are standard, but since we do not expect all of our readers to be familiar with high-dimensional probability, we provide all details. By Hoeffding's Lemma \cite[Lemma 3.6]{van2014probability}, the random variables $q_{ij}^\delta$ are sub-Gaussian with variance proxy $(1-0)^2/4 = 1/4$, i.e.\
\[
\psi(\lambda):= \mathbb E\big[\exp\big(\lambda (q_{ij}^\delta -p))\big)\big] \leq \exp(\lambda^2/8). 
\]
If the values $q_{ij}^\delta$ are independent, the moment-generating function $\psi$ factorizes and we find that $\frac1{m^d}\sum_j q_{ij}^\delta- p$ is $1/(4m^d)$-sub-Gaussian, i.e.\
\[
\mathbb E\left[\exp\left(\frac\lambda{m^d}\sum_j\big(q_{ij}^\delta -p)\big)\right)\right] \leq \exp\left(\frac{\lambda^2}{8\,m^d}\right).
\]
In expectation over the realization of $q_{ij}^\delta$, by Jensen's inequality we have 
\begin{align*}
\mathbb E\bigg[\sup_{u\in \mathcal U_\delta}&\left|\sum_i\int_{Q_{i}^\delta} W_\delta(u_i) - W_{hom}(u_i)\dx \right|\bigg]
	\leq \mathbb E\left[\sum_i |Q_i^\delta|\,\left|\frac1{m^d}\sum_jq_{ij}^\delta - p\right| \right]\\
	&= \mathbb E \left[ \left|\frac1{m^d}\sum_jq_{0j}^\delta - p\right| \right]\\
	&\leq \frac1\lambda\,\log\left( \mathbb E\left[ \exp\left(\frac{\lambda}{m^d}\left|\sum_j(q_{0j}^\delta - p)\right|\right) \right]\right)\\
	&\leq \frac1\lambda\,\log\left( \mathbb E\left[\exp\left( \frac{\lambda}{m^d}\sum_j(q_{0j}^\delta - p)\right) + \exp\left(- \frac{\lambda}{m^d} \sum_j(q_{0j}^\delta - p)\right)  \right]\right)\\
	&\leq \frac1\lambda \log\left( 2\,\exp\left(\frac{\lambda^2}{8m^d}\right)\right)\\
	&= \frac{\lambda}{8m^d} + \frac{\log 2}\lambda
\end{align*}
for any $\lambda>0$. The bound becomes minimal when 
\[
\frac1{8m^d} - \frac{\log 2}{\lambda^2} =0 \qquad\Ra \quad \mathbb E\bigg[\sup_{u\in \mathcal U_\delta}\left|\sum_i\int_{Q_{i}^\delta} W_\delta(u_i) - W_{hom}(u_i)\dx \right|\bigg] \leq 2\sqrt{\frac{\log 2}{8m^d}} \leq m^{-d/2}.
\]
In the same way, we can compute a high probability Chernoff-style bound
\begin{align*}
\mathbb P\bigg(\frac1{n^d} &\sum_i \left|\frac1{m^d}\sum_jq_{ij}^\delta - p\right| > t\bigg) \leq e^{-\lambda t} \mathbb E\left[\exp\left(\frac\lambda {n^d} \sum_i \left|\frac1{m^d}\sum_jq_{ij}^\delta - p\right|\right)\right]\\
	&= e^{-\lambda t} \mathbb E\left[\prod_i\exp\left(\left|\frac\lambda{(nm)^d}\sum_j(q_{ij}^\delta - p)\right|\right)\right]\\
	&= e^{-\lambda t}\prod_i \mathbb E\left[\exp\left(\left|\frac\lambda{(nm)^d}\sum_j(q_{ij}^\delta - p)\right|\right)\right]\\
	&\leq e^{-\lambda t}\prod_i \mathbb E\left[\exp\left(\frac\lambda{(nm)^d}\sum_j(q_{ij}^\delta - p)\right) + \exp\left(-\frac\lambda{(nm)^d}\sum_j(q_{ij}^\delta - p)\right)\right]\\
	&\leq e^{-\lambda t}\prod_i 2\,\exp\left(\frac{\left(\lambda/ {n^d}\right)^2}{8m^d}\right)\\
	&= e^{-\lambda t} \exp\left(\sum_i \left(\log 2 + \frac{\lambda^2}{8n^{2d}m^d}\right)\right)\\
	&= \exp\left(\frac{\lambda^2}{8n^dm^d}- \lambda t + n^d\log 2\right).
\end{align*}
Given $t>0$, the minimum of the polynomial is attained when $\lambda = 4n^dm^d t$ with the value
\[
\frac{\lambda^2}{8n^dm^d}- \lambda t + n^d\log 2 = - 2n^dm^d\,t^2 + n^d\log 2 = n^d\left(\log 2-m^dt^2\right).
\]
For $t = m^{-d/2}$, we see that 
\begin{align*}
\mathbb P\left(\sup_{u\in \mathcal U_\delta}\left|\sum_i\int_{Q_{i}^\delta} W_\delta(u_i) - W_{hom}(u_i)\dx \right| \geq m^{-d/2}\right) \leq \exp\left(\big(\log 2 - 1\big) n^d\right) \leq \exp(-0.3\cdot n^d).
\end{align*}
Thus, if $n, m$ are moderately large, then the discrepancy between $W_\delta$ and $W_{hom}$ is sufficiently small with very high probability. The bounds become stronger in higher dimension since a partition of a cube on a given length-scale contains more small cubes.
\end{example}

The example applies, for instance, to materials where $q_{ij}^\delta$ is distributed uniformly in $[0,1]$ (with $p=1/2$) or to choosing $q_{ij}^\delta = 1$ with probability $p$ and $q_{ij}^\delta =0$ with probability $1-p$. Note that for potentials of the separable structure $W(x,u) = w(x)\,\widetilde W(u)$ with a deterministic or random weight function $w$ as in Examples \ref{example voids} and \ref{example stochastic materials}, it is entirely possible that the the set $w=0$ is connected and that $w$ is zero on the vast majority of the domain, as long as it averages out on a sufficiently short length scale.

\begin{example}[Refinement at the boundary]\label{example multiple lengthscales}
The length scale $\delta$ is the `longest' microscale on which $W_\delta$ oscillates, but not necessarily the only one. 
Our framework also applies to potentials $W_\delta$ whose microlengthscale becomes smaller as we approach the domain $\Omega$. For instance, assume that $\widetilde W:(\R^d/\Z^d)\times \R\to\R$ is a potential which satisfies all necessary continuity and measurability assumptions. Then for $x\in(0,1)^d$ we can define 
\[
\delta(x) = \max \big\{2^k\delta : k\in \mathbb Z\text{ and }2^k\delta \leq \min\{x_1,\dots, x_d, 1-x_1, \dots, 1-x_d\} 
\big\}
\]
and the potential
\[
W_{\delta}: [0, 1]^d\to\R, \qquad W_{\delta}(x,u) =\widetilde W\left(\frac{x}{\min\{\delta, \delta(x)\}}, \:u\right).
\]
The refinement at the boundary could also be terminated after a finite number of scales.
\end{example}

With the same techniques, we can also allow for potentials with multiple small length scales: 
\[
\delta = (\delta_1, \dots, \delta_n), \qquad W_\delta(x,u) = \sum_{i=1}^n  W_{i}\left(\frac x{\delta_i}, u\right), \qquad \max_i \delta_i \ll \eps
\]
If for instance $\delta_1/\delta_2\notin \mathbb Q$, then there is no shared periodic cell for the material, but the same proofs go through -- see Remark \ref{remark multiple length scales}.

\begin{figure}
\begin{center}
\begin{tikzpicture}[scale = 5]
\draw[very thick](.001, .001) --++ (0, .998) -- ++ (.998, 0) -- ++ (0, -.998) -- ++ (-.998, 0) ;

\foreach \x in {1,...,4}{
	\draw[](\x/5, 0) -- ++ (0,1);
	\draw[](0,\x/5) -- ++ (1,0);	
}

\foreach \x in {1, ..., 4}{
	\draw[]({2^(-\x)/5}, 0) -- ++ (0,1);
	\draw[]({1-2^(-\x)/5}, 0) -- ++ (0,1);
	\draw[](0, {2^(-\x)/5}) -- ++ (1,0);
	\draw[](0, {1-2^(-\x)/5}) -- ++ (1,0);

	\foreach\y in {0,..., 4}{
		\draw[]({\y/5 +2^(-\x)/5}, 0) -- ++ (0, {2^(1-\x)/5});
		\draw[]({(\y+1)/5 -2^(-\x)/5}, 0) -- ++ (0, {2^(1-\x)/5});
		\draw[]({\y/5 +1/10+2^(-\x)/5}, 0) -- ++ (0, {2^(1-\x)/5});
		\draw[]({(\y+1)/5 -1/10-2^(-\x)/5}, 0) -- ++ (0, {2^(1-\x)/5});
	}
	\foreach\y in {0, ..., 9}{
		\draw[](\y/10+1/20-1/80,0)--++(0,1/40);
		\draw[](\y/10+1/20+1/80,0)--++(0,1/40);
	}
	\begin{scope}[yshift = 1cm, yscale = -1]
	\foreach\y in {0,..., 4}{
		\draw[]({\y/5 +2^(-\x)/5}, 0) -- ++ (0, {2^(1-\x)/5});
		\draw[]({(\y+1)/5 -2^(-\x)/5}, 0) -- ++ (0, {2^(1-\x)/5});
		\draw[]({\y/5 +1/10+2^(-\x)/5}, 0) -- ++ (0, {2^(1-\x)/5});
		\draw[]({(\y+1)/5 -1/10-2^(-\x)/5}, 0) -- ++ (0, {2^(1-\x)/5});
	}
	\foreach\y in {0, ..., 9}{
		\draw[](\y/10+1/20-1/80,0)--++(0,1/40);
		\draw[](\y/10+1/20+1/80,0)--++(0,1/40);
	}
	\end{scope}

	\begin{scope}[xscale=-1, rotate=90]
	\foreach\y in {0,..., 4}{
		\draw[]({\y/5 +2^(-\x)/5}, 0) -- ++ (0, {2^(1-\x)/5});
		\draw[]({(\y+1)/5 -2^(-\x)/5}, 0) -- ++ (0, {2^(1-\x)/5});
		\draw[]({\y/5 +1/10+2^(-\x)/5}, 0) -- ++ (0, {2^(1-\x)/5});
		\draw[]({(\y+1)/5 -1/10-2^(-\x)/5}, 0) -- ++ (0, {2^(1-\x)/5});
	}
	\foreach\y in {0, ..., 9}{
		\draw[](\y/10+1/20-1/80,0)--++(0,1/40);
		\draw[](\y/10+1/20+1/80,0)--++(0,1/40);
	}
	\end{scope}
	
		\begin{scope}[xshift = 1cm, rotate=90, ]
	\foreach\y in {0,..., 4}{
		\draw[]({\y/5 +2^(-\x)/5}, 0) -- ++ (0, {2^(1-\x)/5});
		\draw[]({(\y+1)/5 -2^(-\x)/5}, 0) -- ++ (0, {2^(1-\x)/5});
		\draw[]({\y/5 +1/10+2^(-\x)/5}, 0) -- ++ (0, {2^(1-\x)/5});
		\draw[]({(\y+1)/5 -1/10-2^(-\x)/5}, 0) -- ++ (0, {2^(1-\x)/5});
	}
	\foreach\y in {0, ..., 9}{
		\draw[](\y/10+1/20-1/80,0)--++(0,1/40);
		\draw[](\y/10+1/20+1/80,0)--++(0,1/40);
	}
	\end{scope}

\draw[<->](-.05, .2) -- ++ (0, .2);
\node[left] at(-.05,  .3){$\delta$};
}
\end{tikzpicture}
\hspace{5mm}
%
%
%
\begin{tikzpicture}[scale = 5]
\draw[very thick](0,0) -- ++(1,0) -- ++ (0,1) -- ++ (-1, 0) -- ++ (0, -1);
\foreach\x in {0, ..., 40}{
	\draw[](0,\x/40) -- ++ (1,0);
	\draw[](\x/40,0) -- ++ (0,1);
}
\foreach\x in {0,..., 39}{
	\foreach \y in {0, ..., 39}
	\fill[opacity = (rand+1)/2 ](\x/40, \y/40) -- ++(1/40, 0) --++(0, 1/40) -- ++ (-1/40,0);
}
\draw[<->] (1.05, .2) --++(0, .2);
\node[right] at (1.05, .3) {$\delta$};
\end{tikzpicture}
\end{center}

\caption{\label{figure not periodic}
An illustration of non-periodic geometries to which our results apply. Left: The geometry of Example \ref{example multiple lengthscales} with boundary refinement. Every square is a rescaled copy of the periodic cell of the potential. Right: The weight grid for a stochastic material as in Example \ref{example stochastic materials} with $q_{ij}^\delta$ distributed uniformly in $[0,1]$.
}
\end{figure}
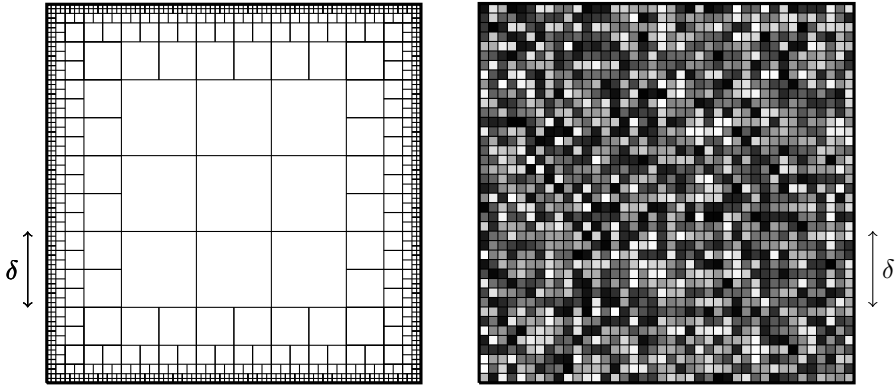

\begin{example}[Rare inclusions]
In previous examples, we considered the situation in which there may be a small discrepancy between $W_{hom}$ and the average of $W_\delta$ over many periodic cells. In the opposite regime, we can consider a potential that coincides with $W_{hom}$ except on randomly or deterministically placed `inclusion sites' which we can picture as a harder or softer material. In dimension $d$, the same techniques apply for instance if the modification to $W$ is of magnitude $1$ on $N_{d,\delta} \sim \delta^{1-d}$ balls or cubes of diameter $\sim \delta$. The details are left to the reader.
\end{example}

\begin{example}[Less regular potentials in $u$]
Consider the potential $W_\delta(x, u) = W_{hom}(x,u) = |1-u^2|^\alpha= |1-u|^\alpha\,|1+u|^\alpha$. Inside of $[-1,1]$, the potential is H\"older continuous with exponent $\alpha$, but not with any exponent $\beta>\alpha$. As we require Lipschitz continuity, we are constrained to the range $\alpha\geq 1$, while the results of \cite{cristoferi2023homogenization_delta_small} apply to this potential for the full range $\alpha>0$ and spatially inhomogenous generalizations of the form
\[
W_1(x) = c(x)\,\big|1-u^2\big|^{\alpha(x)}
\]
for strictly positive and measurable periodic functions $c(x), \alpha(x)$.
\end{example}

\section{Main results}\label{section main}

We now come to the statement and proof of our main results for the homogenization of materials with phase boundaries in the regime $\delta\ll \eps$. First, we consider two auxiliary statements which easily implies Theorem \ref{theorem cfg} under the alternative assumptions of Section \ref{section assumptions}. 

Denote the truncation of a function at $\pm M$ by $T_Mu = Mu/ \max\{|u|, M\}$ and we write that $u_\eps \xrightarrow {d_M} u$ if $T_Mu_\eps\to T_M u$ in $L^1(\Omega)$.

\subsection{A preliminary analysis}
The following Lemma is a more careful statement of results obtained in Lemma 3.5 of \cite{hagerty2018note}.

\begin{lemma}\label{lemma eps32}
For all $\alpha>0$, the following bound holds: 
\begin{align*}
\int_{Q_i^\delta} \frac\eps 2 &\,\|\nabla u\|^2 + \frac{W_\delta(x,u)}\eps \dx 
	\geq \int_{Q_i^\delta} \frac{(1-C_P\alpha)\eps} 2 \,\|\nabla u\|^2 + \frac{W_{hom}(u) }\eps\dx\\
	&\hspace{4cm}- C\,\left(\frac{\delta}{\eps}\right)^2\frac 1{\alpha\eps}\,|Q_i^\delta| - \left|\int_{Q_i^\delta} \frac{W_\delta(x,\langle u\rangle) - W_{hom}(\langle u\rangle)}\eps\dx\right|.
\end{align*}
\end{lemma}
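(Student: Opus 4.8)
The plan is to replace the spatially varying potential $W_\delta(x,u)$ by the homogenized $W_{hom}(u)$ inside the cell $Q:=Q_i^\delta$ (with $i$ fixed), the only obstruction being that a function of controlled energy need not be constant on $Q$. Throughout we take $u\in H^1(Q)$ with values in $[-M,M]$; in the applications the general case reduces to this after truncating $u$ at level $M$, which only lowers the left-hand side by Condition~(6). Write $c:=\langle u\rangle_Q$, so $c\in[-M,M]$ as well by convexity. The argument has three moves -- freeze $u$ to $c$ inside $W_\delta$, homogenize the frozen potential, then unfreeze to recover $W_{hom}(u)$ -- and the cost of the two freezing moves is paid using the Poincar\'e inequality on $Q$, which operates at the microscale $\delta$.

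\emph{Freezing and homogenizing.} By the Lipschitz condition~\eqref{condition lipschitz}, $W_\delta(x,u(x))\ge W_\delta(x,c)-L_\delta(x)\,|u(x)-c|$ for a.e.\ $x\in Q$. Since $c$ is constant on $Q$,
\[
\int_Q W_\delta(x,c)\dx \;=\; |Q|\,W_{hom}(c)+\int_Q\big(W_\delta(x,c)-W_{hom}(c)\big)\dx\;\ge\;|Q|\,W_{hom}(c)-\Big|\int_Q\big(W_\delta(x,c)-W_{hom}(c)\big)\dx\Big|,
\]
and keeping this discrepancy as a single signed cell integral (modulus outside the integral, not inside) is precisely what makes it the last term of the statement and lets it be controlled later by the compatibility bound~(11) rather than a weaker quantity. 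Finally, by Lipschitz continuity of $W_{hom}$ (constant $L_{hom}$), $W_{hom}(c)\ge W_{hom}(u(x))-L_{hom}|u(x)-c|$, and integrating gives $|Q|\,W_{hom}(c)\ge\int_Q W_{hom}(u)\dx-L_{hom}\int_Q|u-c|\dx$.

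\emph{Paying the freezing cost.} Combining the three inequalities, the left-hand side of the lemma is at least $\int_Q\bigl(\tfrac{\eps}{2}\|\nabla u\|^2+\tfrac{W_{hom}(u)}{\eps}\bigr)\dx$, minus the compatibility term above, minus the surplus $\tfrac1{\eps}\int_Q\big(L_\delta(x)+L_{hom}\big)|u-c|\dx$. For the surplus, H\"older with the exponent $p$ from~\eqref{condition lipschitz} and its conjugate $p'\le 2$ gives $\int_Q L_\delta|u-c|\dx\le\|L_\delta\|_{L^p(Q)}\,\|u-c\|_{L^{p'}(Q)}$; then the embedding $L^2(Q)\hookrightarrow L^{p'}(Q)$ (finite measure), the Poincar\'e inequality $\|u-c\|_{L^2(Q)}^2\le c_P\delta^2\int_Q\|\nabla u\|^2\dx$, and the cell bound $\tfrac1{|Q|}\int_Q L_\delta^p\dx\le C$ collapse the whole surplus (its $L_{hom}$-part being even easier) to $\tfrac1{\eps}\int_Q(L_\delta+L_{hom})|u-c|\dx\le\tfrac{C\delta}{\eps}\,|Q|^{1/2}\bigl(\int_Q\|\nabla u\|^2\dx\bigr)^{1/2}$. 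A final application of Young's inequality $ab\le\tfrac{c_P\alpha\eps}{2}a^2+\tfrac1{2c_P\alpha\eps}b^2$ with $a=\bigl(\int_Q\|\nabla u\|^2\dx\bigr)^{1/2}$ and $b=\tfrac{C\delta}{\eps}|Q|^{1/2}$ bounds the surplus by $\tfrac{c_P\alpha\eps}{2}\int_Q\|\nabla u\|^2\dx+C\bigl(\tfrac{\delta}{\eps}\bigr)^2\tfrac1{\alpha\eps}|Q|$. Subtracting the first of these from the still untouched $\tfrac{\eps}{2}\int_Q\|\nabla u\|^2\dx$ leaves $\tfrac{(1-c_P\alpha)\eps}{2}\int_Q\|\nabla u\|^2\dx$ (so $C_P=c_P$), and collecting the remaining terms gives the claimed inequality.

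\emph{Main obstacle.} There is nothing deep here; the only delicate point is the scaling. Freezing $u$ to $c$ genuinely costs $\sim\tfrac1{\eps}\int_Q L_\delta|u-c|$, which is \emph{not} a priori dominated by the gradient term $\eps\int_Q\|\nabla u\|^2$, since the weights $1/\eps$ and $\eps$ pull in opposite directions. It is precisely the Poincar\'e inequality \emph{at scale $\delta$}, gaining the factor $\delta$, that makes the trade possible; after Young's inequality the leftover mismatch resurfaces squared, as the error $C(\delta/\eps)^2(\alpha\eps)^{-1}|Q|$, and everything else is bookkeeping. One should also bear in mind that the Lipschitz and compatibility hypotheses are available only for arguments in $[-M,M]$, which is why $u$ is assumed (or first truncated) to take values there.
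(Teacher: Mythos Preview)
Your proof is correct and follows essentially the same three-move strategy as the paper: freeze $u$ to its cell average, swap $W_\delta$ for $W_{hom}$ at the frozen value, then unfreeze. The only cosmetic difference is the order in which you handle the surplus term: you apply H\"older (with exponent $p$) and Poincar\'e first and then Young's inequality at the level of integrals, whereas the paper applies Young's inequality pointwise (effectively using only the $L^2$ bound on $L_\delta$, which follows from the $L^p$ bound since $p\ge 2$) and then Poincar\'e; both routes yield the same error terms.
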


\begin{proof}
Let $\langle u\rangle = \frac1{|Q_i^\delta|}\int_{Q_i^\delta} u\dx$. We compute
{\small 
\begin{align*}
\int_{Q_i^\delta} \frac\eps 2 &\,\|\nabla u\|^2 + \frac{W_\delta(x,u)}\eps \dx\\
	 &= \int_{Q_i^\delta} \frac\eps 2 \,\|\nabla u\|^2 + \frac{W_\delta(x,u) - W_\delta(x,\langle u\rangle) + W_\delta(x,\langle u\rangle)}\eps \dx\\
	 &\geq \int_{Q_i^\delta} \frac\eps 2 \,\|\nabla u\|^2 + \frac{W_\delta(x,u) - W_\delta(x,\langle u\rangle) + W_{hom}(\langle u\rangle)}\eps \dx - \left|\int_{Q_i^\delta} \frac{W_\delta(x,\langle u\rangle) - W_{hom}(\langle u\rangle)}\eps\dx\right|\\
	 &\geq \int_{Q_i^\delta} \frac\eps 2 \,\|\nabla u\|^2 + \frac{W_{hom}(\langle u\rangle) - L_\delta(x)\,|u - \langle u\rangle|}\eps \dx - \left|\int_{Q_i^\delta} \frac{W_\delta(x,\langle u\rangle) - W_{hom}(\langle u\rangle)}\eps\dx\right|\\
	 &= \int_{Q_i^\delta} \frac\eps 2 \,\|\nabla u\|^2 + \frac{W_{hom}(u) }\eps\dx - \int_{Q_i^\delta} \frac{L_\delta(x) + L}{\eps}|u-\langle u\rangle| \dx - \left|\int_{Q_i^\delta} \frac{W_\delta(x,\langle u\rangle) - W_{hom}(\langle u\rangle)}\eps\dx\right|.
\end{align*}
}
We analyze the middle term:
{\small
\begin{align*}
\int_{Q_i^\delta} \frac{L_\delta(x) + L}{\eps}|u-\langle u\rangle| \dx 
	\leq \int_{Q_i^\delta} \frac{\big(L_\delta(x) + L\big)^2\delta^2} {2\alpha\eps^3}+ \frac{\alpha\eps}{2\delta^2} \,|u-\langle u\rangle|^2 \dx
	\leq C\,\frac{\delta^2}{\eps^3\alpha}\,|Q_i^\delta| + \int_{Q_i^\delta} \frac{C_P\alpha\eps}{2} \,\|\nabla u\|^2 \dx
\end{align*}
}
for any $\alpha>0$. In total:
{\small
\begin{align*}
\int_{Q_i^\delta}& \frac\eps 2 \,\|\nabla u\|^2 + \frac{W_\delta(x,u)}\eps \dx \\&
	\geq \int_{Q_i^\delta} \frac{(1-C_P\alpha)\eps} 2 \,\|\nabla u\|^2 + \frac{W_{hom}(u) }\eps\dx - C\,\left(\frac{\delta}{\eps}\right)^2\frac 1{\alpha\eps}\,|Q_i^\delta| - \left|\int_{Q_i^\delta} \frac{W_\delta(x,\langle u\rangle) - W_{hom}(\langle u\rangle)}\eps\dx\right|.
\end{align*}
}
\end{proof}

As observed in \cite{hagerty2018note}, this result immediately implies convergence if $\delta \ll \eps^{3/2}$.

\begin{corollary}
Assume that $W$ satisfies the conditions in Section \ref{section assumptions} and $\delta_\eps \ll \eps^{3/2}$. Then
\[
\Gamma(L^1)-\lim_{\eps\to 0^+} \F_{\eps,\delta_\eps} = \Gamma(L^1)-\lim_{\eps\to 0^+} \F_{\eps,hom}.
\]
\end{corollary}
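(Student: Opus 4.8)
The plan is to derive the Corollary directly from Lemma~\ref{lemma eps32} by summing the pointwise estimate over the partition $\{Q_i^\delta\}_{i=1}^{N_\delta}$, choosing the free parameter $\alpha$ carefully in terms of $\eps$ and $\delta$, and then feeding the resulting comparison of functionals into the already-known $\Gamma$-convergence of $\F_{\eps,hom}$ (which is the classical Modica--Mortola result, since $W_{hom}$ satisfies Assumptions (8)--(10)). Since $\F_{\eps,hom}$ is the homogeneous Modica--Mortola functional associated to the admissible double-well potential $W_{hom}$, its $\Gamma(L^1)$-limit is $c_{hom}\cdot\Per_\Omega$; hence it suffices to show $\Gamma(L^1)\text{-}\lim \F_{\eps,\delta_\eps}$ equals the same object.

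First I would sum the estimate of Lemma~\ref{lemma eps32} over $i=1,\dots,N_\delta$. Using Assumption (3) that the $Q_i^\delta$ exhaust $\Omega$ up to a null set, the left side becomes $\F_{\eps,\delta}(u)$; on the right, the gradient and $W_{hom}$ terms assemble into $\int_\Omega \frac{(1-c_P\alpha)\eps}{2}\|\nabla u\|^2 + \frac{W_{hom}(u)}{\eps}\dx$. The error term $\sum_i C(\delta/\eps)^2\frac1{\alpha\eps}|Q_i^\delta|$ telescopes to $C(\delta/\eps)^2\frac1{\alpha\eps}|\Omega|$, and the last term, after using Assumption (11) with the vector $\langle u\rangle_{Q_i^\delta}$ (legitimate once we first truncate $u$ at $\pm M$, which only decreases $\F_{\eps,\delta}$ by Assumption (6) and the monotonicity of $W_{hom}$ outside $(-\gamma,\gamma)$), is bounded by $C\delta/\eps$. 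Thus, for every $\eps$ and every admissible $u$,
\[
\F_{\eps,\delta_\eps}(u) \geq (1-c_P\alpha)\,\F_{\eps,hom}\big((1-c_P\alpha)^{?}\cdots\big) - C\Big(\frac{\delta_\eps}{\eps}\Big)^2\frac{1}{\alpha\eps}|\Omega| - C\frac{\delta_\eps}{\eps},
\]
or more cleanly, $\F_{\eps,\delta_\eps}(u) \geq (1-c_P\alpha)\F_{\eps,hom}(u) - C(\delta_\eps^2/(\alpha\eps^3)) - C(\delta_\eps/\eps)$, where I absorbed the prefactor onto the whole homogeneous functional (legitimate because $W_{hom}\geq 0$, so dropping a $(1-c_P\alpha)$ in front of the potential term only weakens the inequality). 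Now choose $\alpha = \alpha_\eps \to 0$ slowly, e.g. $\alpha_\eps = (\delta_\eps/\eps^{3/2})^{2/3} \to 0$ under the hypothesis $\delta_\eps \ll \eps^{3/2}$; then $c_P\alpha_\eps \to 0$ and $\delta_\eps^2/(\alpha_\eps \eps^3) = (\delta_\eps/\eps^{3/2})^{4/3} \to 0$, while $\delta_\eps/\eps \to 0$ as well. This yields the liminf inequality: if $u_\eps \to u$ in $L^1(\Omega)$, then $\liminf_\eps \F_{\eps,\delta_\eps}(u_\eps) \geq \liminf_\eps (1-c_P\alpha_\eps)\F_{\eps,hom}(u_\eps) \geq c_{hom}\Per_\Omega(\{u=1\})$ by the classical Modica--Mortola $\Gamma$-liminf.

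For the $\Gamma$-limsup, I would run the same comparison in the other direction. A symmetric application of the Lipschitz bound in Assumption (7) and Assumption (11) gives an upper bound $\F_{\eps,\delta_\eps}(u) \leq (1+c_P\alpha)\F_{\eps,hom}(u) + c_P\alpha\cdot(\text{gradient term}) + C(\delta_\eps^2/(\alpha\eps^3)) + C(\delta_\eps/\eps)$; applied to the classical Modica--Mortola recovery sequence $u_\eps$ for a given $u\in BV(\Omega;\{-1,1\})$ — which satisfies $\F_{\eps,hom}(u_\eps)\to c_{hom}\Per_\Omega(\{u=1\})$ and for which the gradient term is also bounded by $c_{hom}\Per_\Omega$ plus $o(1)$ — and sending $\eps\to 0$ with the same choice $\alpha_\eps = (\delta_\eps/\eps^{3/2})^{2/3}$, one gets $\limsup_\eps \F_{\eps,\delta_\eps}(u_\eps) \leq c_{hom}\Per_\Omega(\{u=1\})$. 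Together with the liminf inequality and the fact that $\Gamma(L^1)\text{-}\lim\F_{\eps,hom} = c_{hom}\Per_\Omega$, this establishes $\Gamma(L^1)\text{-}\lim\F_{\eps,\delta_\eps} = \Gamma(L^1)\text{-}\lim\F_{\eps,hom}$.

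The main obstacle is bookkeeping rather than conceptual: one must make sure the truncation step is genuinely energy-nonincreasing for $\F_{\eps,\delta}$ (this is exactly what Assumption (6) is for, plus that $T_M u_\eps \to T_M u$ whenever $u_\eps\to u$), and one must verify that applying Assumption (11) to the piecewise-constant vector $(\langle u\rangle_{Q_i^\delta})_i$ is valid — i.e. that these averages lie in $[-M,M]$ after truncation, which holds because averaging is order-preserving. A secondary point is that the recovery sequence in the limsup must have its Dirichlet energy controlled along with $\F_{\eps,hom}$; this is automatic from the standard one-dimensional profile construction, where the two halves of the energy are asymptotically equal, but it should be stated. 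Finally, the choice of $\alpha_\eps$ must be recorded explicitly so that all three error contributions — $c_P\alpha_\eps$, $\delta_\eps^2/(\alpha_\eps\eps^3)$, and $\delta_\eps/\eps$ — are simultaneously $o(1)$; any $\alpha_\eps$ with $\delta_\eps^2/\eps^3 \ll \alpha_\eps \ll 1$ works, and such $\alpha_\eps$ exists precisely because $\delta_\eps \ll \eps^{3/2}$.
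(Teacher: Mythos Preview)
Your proposal is correct and follows essentially the same route as the paper: truncate at $\pm M$, sum Lemma~\ref{lemma eps32} over the partition, choose $\alpha_\eps$ with $\delta_\eps^2/\eps^3 \ll \alpha_\eps \ll 1$ (your explicit choice $\alpha_\eps=(\delta_\eps/\eps^{3/2})^{2/3}$ is one valid instance), and invoke the classical Modica--Mortola $\Gamma$-convergence of $\F_{\eps,hom}$. The only minor difference is in the limsup: the paper simply observes that $\F_{\eps,\delta_\eps}$ and $\F_{\eps,hom}$ agree in the limit along the standard $O(1/\eps)$-Lipschitz recovery sequence (with details deferred to Theorem~\ref{theorem main}), whereas you propose a symmetric upper-bound version of Lemma~\ref{lemma eps32}; both work and amount to the same computation.
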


\begin{proof}
Observe that $\F_{\eps,\delta}(T_Mv) \leq \F_{\eps,\delta}(v)$ for any $v$ by the assumptions on $W_\delta$ and \cite[Satz 5.20]{dobrowolski2010angewandte}.
Let $\alpha_\eps>0$ be such that $\alpha_\eps \to 0^+$ and $\frac{\delta_\eps^2}{\eps^3\,\alpha_\eps}\to 0^+$. Then
\begin{align*}
\liminf_{\eps\to 0^+}\F_{\eps,\delta_\eps}(u_\eps) &\geq \liminf_{\eps\to 0^+} \F_{\eps,\delta_\eps}(T_Mu_\eps)\\
	&= \liminf_{\eps\to 0^+} \int_{\Omega } \frac{\eps} 2 \,\|\nabla T_Mu_\eps\|^2 + \frac{W_{hom}(x,T_Mu_\eps) }\eps\dx\\
	&\geq \liminf_{\eps\to 0^+}\bigg[\sum_{i=1}^{N_{\delta_\eps}} \int_{Q_i^{\delta_\eps}} \frac{(1-C_P\alpha_\eps)\eps} 2 \,\|\nabla u\|^2 + \frac{W_{hom}(x,u) }\eps\dx\\
	&\qquad - C\,\frac {\delta_\eps^2}{\eps^3\,\alpha\eps}\,\sum_{i=1}^{N_{\delta_\eps}}|Q_i^{\delta_\eps}| - \sum_{i=1}^{N_{\delta_\eps}} \frac1\eps \bigg|\int_{Q_i^\delta} W_\delta(x,\langle u\rangle) - W_{hom}(\langle u\rangle)\dx\bigg|\bigg]\\
	&\geq \liminf_{\eps\to 0^+}\bigg[ (1-C_P\alpha_\eps)\,\F_{\eps,hom}(T_Mu_\eps) - \frac{\delta_\eps^3}{\eps^2\alpha_\eps} \,|\Omega| - C\frac{\delta_\eps}\eps\bigg]\\
	&\geq \liminf_{\eps \to 0^+}\F_{\eps,hom}(T_Mu_\eps).
\end{align*}
It is easy to see that $\F_{\eps,\delta_\eps}$ and $\F_{\eps,hom}$ have the same limit along a recovery sequence, which is $O(1/\eps)$-Lipschitz at the interface and (essentially) constant elsewhere. This holds even without the scale separation assumption. For details, see the proof of Theorem \ref{theorem main}.
\end{proof}

In the general case, we need an additional argument which allows us to focus only on the interface between the phases $\{u\approx -1\}$ and $\{u\approx 1\}$ when establishing the liminf-inequality. The functional $\F_{\eps,hom}$ concentrates all of its energy here, so this suffices to establish the liminf-inequality. Such a restriction however is only possible if $W_\delta \geq - C\delta$ -- otherwise, counterexamples like in Example \ref{example voids} show that it may be possible to create {\em negative} energy on the domain where $u$ is close to pure phase $\pm 1$ unless $\delta$ is much smaller not just than $\eps$, but also $\eps^{3/2}$.

\begin{lemma}\label{lemma liminf}
Assume that $\Omega, W_\delta, W_{hom}$ satisfy the assumptions in Section \ref{section assumptions}. Assume that $\delta_\eps$ is a parametrized family such that $\lim_{\eps\to 0^+} \delta_\eps/\eps = 0$ and consider $\F_\eps := \F_{\eps, \delta_\eps}$ and $\F_{\eps,hom}$ as in \eqref{eq our modica mortola functionals}. Let $\Omega'\subset\Omega$ be open, compactly contained in $\Omega$ and $\gamma \in (0,1)$. If $u_\eps:\Omega\to\R$ satisfies $\liminf_{\eps\to 0^+} \F_\eps(u_\eps)<\infty$, there exists a family of function $\hat u_\eps:\Omega\to [-M, M]$ such that, for a subsequence $\eps\to 0$ which realizes the lower limit, we have
\[
\liminf_{\eps\to 0^+}\F_\eps(u_\eps) \geq \liminf_{\eps\to 0^+}\int_{\{-\gamma<\hat u_\eps < \gamma\}\cap \Omega'} \frac\eps2\,\|\nabla \hat u_\eps\|^2 + \frac{W_{hom}(\hat u_\eps)}\eps\dx, \qquad \lim_{\eps\to 0^+} \big\|\hat u_\eps - T_Mu_\eps \big\|_{L^p} = 0.
\]
\end{lemma}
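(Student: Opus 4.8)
The plan is to regularize $u_\eps$ by a single short step of the minimizing movements scheme (i.e.\ one implicit Euler step for the $L^2$-gradient flow of $\F_\eps$), then exploit the improved regularity to cut away the bulk phases. Concretely, first I would replace $u_\eps$ by $T_M u_\eps$, which only decreases $\F_\eps$ by the assumptions on $W_\delta$ (Conditions 6 and the truncation lemma cited in the proof of the Corollary), so without loss of generality $u_\eps$ takes values in $[-M,M]$. Then, for a well-chosen time-step $\tau = \tau_\eps$ (to be sent to $0$ slowly relative to $\eps$), let $\hat u_\eps$ be the minimizer of
\[
v \mapsto \F_\eps(v) + \frac1{2\tau_\eps}\int_\Omega |v - u_\eps|^2\dx.
\]
Minimality gives three things at once: $\F_\eps(\hat u_\eps)\le \F_\eps(u_\eps)$; the $L^2$-distance bound $\|\hat u_\eps - u_\eps\|_{L^2}^2 \le 2\tau_\eps\,\F_\eps(u_\eps)\le C\tau_\eps$, which forces $\hat u_\eps \to T_M u_\eps$ in $L^2$ (hence in $L^p$ after retruncating, since we may also truncate the minimizer); and the Euler--Lagrange equation $-\eps\,\Delta \hat u_\eps + \frac1\eps W_\delta'(x,\hat u_\eps)$-type relation $\frac{\hat u_\eps - u_\eps}{\tau_\eps} = \eps\,\Delta\hat u_\eps - \frac1\eps \partial_u W_\delta(x,\hat u_\eps)$ in the weak sense, which upgrades $\hat u_\eps$ to $H^2_{loc}$ with quantitative bounds. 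Here the Lipschitz hypothesis on $W_\delta$ in $u$ (Condition \ref{condition lipschitz}, with the $L^p$ control on $L_\delta$ and $p>d/2$) is exactly what makes the right-hand side lie in the right space to run elliptic regularity and obtain, after Sobolev embedding, uniform-in-$\eps$ (or mildly $\eps$-degenerate) local bounds on $\hat u_\eps$ and $\nabla \hat u_\eps$ on scales $\gg \delta_\eps$.

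With $\hat u_\eps$ in hand, the second half is the bulk-cutting argument. Using Condition \ref{condition scale separation}(a), namely $W_\delta \geq -C\delta_\eps$, together with the homogenization bound \eqref{eq homogenization bound} and Lemma \ref{lemma eps32} applied cell-by-cell with $\alpha = \alpha_\eps\to 0$ chosen so that $\delta_\eps^2/(\eps^3\alpha_\eps)\to 0$ (possible since $\delta_\eps/\eps\to 0$ actually only gives $\delta_\eps^2/\eps^2\to0$; one needs $\delta_\eps \ll \eps$ and then picks $\alpha_\eps$ between $\delta_\eps^2/\eps^3$ and $1$ — wait, this requires $\delta_\eps^2/\eps^3$ bounded, which is \emph{not} implied, so instead one uses $W_\delta\ge -C\delta_\eps$ directly on the bulk), I would argue as follows. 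On the region $\{|\hat u_\eps|\ge\gamma\}$, the integrand $\frac\eps2|\nabla\hat u_\eps|^2 + \frac{W_\delta(x,\hat u_\eps)}\eps$ is bounded below by $-C\delta_\eps/\eps \cdot \mathbf 1$, and since this region has measure at most $|\Omega|$ this contributes an error $\ge -C\delta_\eps/\eps \to 0$; similarly the part of $\Omega\setminus\Omega'$ is discarded using the same lower bound and that $W_{hom}\ge 0$. On the remaining region $\{|\hat u_\eps|<\gamma\}\cap\Omega'$ one passes from $W_\delta$ to $W_{hom}$ via Lemma \ref{lemma eps32}, where the oscillation error $\delta_\eps^2/(\eps^2\alpha_\eps\eps)|Q_i^\delta|$ summed over cells meeting $\{|\hat u_\eps|<\gamma\}$ is now small: the key point is that the improved regularity of $\hat u_\eps$ controls the measure of $\{|\hat u_\eps|<\gamma\}$, so the summed oscillation term is $O(\delta_\eps^2/(\eps^3\alpha_\eps)\cdot|\{|\hat u_\eps|<\gamma\}|)$ and the transition region's measure (of order $\eps$ for finite-energy configurations, or at worst controlled by the energy) absorbs one power of $\eps$; choosing $\alpha_\eps$ appropriately closes the estimate.

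The main obstacle I expect is exactly the bookkeeping of $\eps$-dependent scales in the regularization step: one must choose $\tau_\eps$ small enough that $\|\hat u_\eps - u_\eps\|_{L^2}\to 0$, yet large enough (relative to $\eps$ and $\delta_\eps$) that the elliptic estimate on $\hat u_\eps$ is useful and survives the $L^p$ bound on $L_\delta$ in Condition \ref{condition lipschitz}; verifying that a compatible choice exists for every sequence with bounded $\liminf \F_\eps$ is the delicate part. A secondary subtlety is the interplay between the cell decomposition $\{Q_i^\delta\}$ and the regions $\{|\hat u_\eps|<\gamma\}$ versus $\{|\hat u_\eps|\ge\gamma\}$: the level set is not adapted to the cells, so one should fatten the bad region by one layer of cells (thickness $\sim R\delta_\eps$), which is harmless since $\delta_\eps\ll\eps$ and the energy density on the fattening layer is controlled by the lower bound $W_\delta\ge -C\delta_\eps$ plus the gradient term. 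Once these scale choices are pinned down, the two displayed conclusions of the Lemma follow by combining $\F_\eps(\hat u_\eps)\le\F_\eps(u_\eps)$ with the cell-wise estimate and the $L^2\to L^p$ convergence. Everything else — the truncation monotonicity, the Poincar\'e inequalities on the cells, the elliptic regularity — is quoted or standard.
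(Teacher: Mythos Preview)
Your proposal is correct and follows essentially the same approach as the paper: truncate, take one minimizing-movements step with $\tau_\eps\to0$ (the paper takes $\tau_\eps=\eps$), use the Euler--Lagrange equation plus elliptic regularity---the paper makes your phrase ``bounds on scales $\gg\delta_\eps$'' precise by rescaling to an intermediate scale $\rho_\eps=\sqrt{\eps\delta_\eps}$ and obtaining $C^{0,\alpha}$ (not $C^1$) control of $\hat u_\eps$ at that scale via $W^{2,p}\hookrightarrow C^{0,\alpha}$---then split the cells into interface/bulk/boundary and apply Lemma~\ref{lemma eps32} with $\alpha=\delta_\eps/\eps$ only on the interface cells, whose total measure is shown to be $O(\eps)$ exactly as you anticipate. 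The two obstacles you flag (the compatible choice of $\tau_\eps$ and the fattening of $\{|\hat u_\eps|<\gamma\}$ by one cell layer) are precisely where the paper fills in the scale bookkeeping.
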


\begin{proof}
For the moment, fix $\eps>0$ and denote $u = u_\eps$. Recall that $\F_{\eps,\delta}(T_Mv) \leq \F_{\eps,\delta}(v)$ as above. For $\tau>0$, we define
\[
\hat u \in \argmin_v \F_{\eps, \delta}(v) + \frac1{2\tau}\,\| v - T_Mu\|_{L^2(Q)}^2.
\]
A minimizer exist $\hat u \in H^1(\Omega)$ exists and satisfies $-M\leq \hat u \leq M$ since the functional is coercive, but without further assumptions, we do not know that $\hat u$ is unique. We note however that
\begin{enumerate}
\item $\F_{\eps, \delta_\eps}(\hat u) \leq \F_{\eps, \delta_\eps}(T_Mu) \leq \F_{\eps, \delta_\eps}(u)$.
\item $\| \hat u - T_Mu\|_{L^2} \leq \sqrt{2\tau\big(\F_{\eps}(u) - \F_{\eps}(\hat u)\big)}$. 
\item $T_M\hat u = \hat u$ since both terms in the energy $\F_\eps$ are non-increasing under truncation, and the first term is strictly decreasing unless $|\hat u|\leq M$ almost everywhere. 
\end{enumerate}

Taking $\tau = \tau_\eps\to 0$, we note that 
\[
\|\hat u-T_Mu\|_{L^p(\Omega)} = \left(\int_\Omega |\hat u-T_M u|^p\cdot 1\dx\right)^\frac1p\leq |\Omega|^{1- \frac p2} \|\hat u-T_m\|_{L^2(\Omega)}\lesssim \tau_\eps^{1/2}
\]
for $1\leq p<2$ by H\"older's inequality and 
\[
\|\hat u-T_Mu\|_{L^p(\Omega)} \leq \|\hat u - T_Mu\|_{L^2(\Omega)}^\frac2p \|\hat u-T_Mu\|_{L^\infty(\Omega)}^{1-\frac 2p} \lesssim C\tau_\eps^{1/p}\to 0
\]
for $2<p<\infty$ by H\"older's inequality/interpolation as in \cite[Exercise 4.4]{brezis2011functional}.
Due to the smoothness assumptions on $W_\delta$, we find that $\hat u$ satisfies the Euler-Lagrange equation
\[
-\eps \Delta \hat u + \frac1\eps\,(\partial_uW_\delta)(x, \hat u) + \frac{\hat u - T_Mu}\tau = 0.
\]

Let $\rho_\eps$ be any length scale such that $\delta_\eps \ll \rho_\eps \ll \eps$, for example the geometric mean $\rho_\eps = \sqrt{\eps\delta_\eps}$. For the ease of presentation, we abbreviate $\rho= \rho_\eps$ and $\delta = \delta_\eps$ while keeping the dependence on $\eps$ in mind.
For any $x_0 \in \Omega$, the rescaled function $\hat u_\rho (y) = \hat u(x_0+\rho y)$ satisfies
\begin{align*}
\Delta \hat u_\rho (y) &= \rho^2\,(\Delta \hat u) (x_0 +\rho y)\\
	&=  \frac{\rho^2}{\eps^2}\partial_u W_\delta\big(x_0+\rho y, \:\hat u_\rho (y)\big) + \frac{\rho^2}{\tau\eps} \,\big(\hat u - T_Mu\big)(x_0 +\rho y)
\end{align*}
on the set $\{ y : x_0 +\rho y\in \Omega\}$. In particular, if $B_{(R+1)\rho}(x_0)\subseteq\Omega$, then $\|\hat u_\rho\|_{L^\infty(B_{R+1})} \leq M$ and, for $p$ as used in Section \ref{section assumptions} to control the Lipschitz-constant of $W_\delta(x, \cdot)$, we have
\begin{align*}
\|\Delta \hat u_\eps\|_{L^p(B_{R+1})} &\leq \left( \sum_{\{i : Q_i^\delta \cap B_{(R+1)\rho}(x_0)\neq \emptyset\}} \int_{Q_i^\delta} \left(\frac{\rho^2}{\eps^2}\right)^p \,L_\delta(x)^p \rho^{-d} \dx  \right)^\frac1p + \frac{2M\rho^2}{\eps\tau}\,|B_{R+1}|^{1/p}\\
	&= \left(\frac\rho \eps\right)^2\left( \rho^{-d} \sum_{\{i : Q_i^\delta \cap B_{(R+1)\rho}(x_0)\neq \emptyset\}} |Q_i^\delta|\cdot \frac1{|Q_i^\delta|} \int_{Q_i^\delta} L_\delta(x)^p \dx  \right)^\frac1p + \frac{2M\rho^2}{\eps\tau}\,|B_{R+1}|^{1/p}\\
	&\leq \left(\frac\rho \eps\right)^2\left(C \rho^{-d} \sum_{\{i : Q_i^\delta \cap B_{(R+1)\rho}(x_0)\neq \emptyset\}} |Q_i^\delta|  \right)^\frac1p + \frac{2M\rho^2}{\eps\tau}\,|B_{R+1}|^{1/p}\\
	&\leq C \left(\frac\rho \eps\right)^2\,\rho^{-d/p}\left| \bigcup_{\{i : Q_i^\delta \cap B_{(R+1)\rho}(x_0)\neq \emptyset\}} Q_i^\delta\right|^\frac1p + \frac{2M\rho^2}{\eps\tau}\,|B_{R+1}|^{1/p}\\
	&\leq C (\rho/\eps)^2\, \rho^{d/p} \big| B_{(R+1)\rho + R\delta}(x_0)\big|+ (2M\rho^2)/{\eps\tau}\cdot |B_{R+1}|^{1/p}
\end{align*}
since the cells $Q_i^\delta$ are disjoint and their diameter is bounded above by $R\delta$. Since $R\delta\leq R\rho$, we find that
\[
\|\Delta \hat u_\eps\|_{L^p(B_2)}  \leq \big(C\,(\rho/\eps)^2 (2R+1)^{d/p} + 2^{d/p}\cdot 2M\rho^2/\tau\big)\,\big|B_1\big|^{1/p}.
\]
In particular, the right hand side is uniformly bounded (and in fact, decays to zero) if we select $\tau = \eps$ or similar.

By localizing \cite[Corollary 9.10]{gilbarg1977elliptic} after multiplying with a smooth cut-off function which is one on $B_1(0)$ and vanishes outside of $B_2(0)$, we find that $\|\hat u_\rho\|_{W^{2,p}(B_1)}\leq C$ for all $x_0\in \Omega$ such that the larger ball $B_{2\rho}(x_0)$ is contained in $\Omega$. Without loss of generality, we may assume that $p<d$. By the Sobolev embedding Theorem \cite[Satz 6.11]{dobrowolski2010angewandte}, we note that $W^{2,p}$ embeds into $W^{1, \frac{dp}{d-p}}$ which in turn embeds into $C^{0,\alpha}$ with $\alpha = 1- d\frac{d-p}{dp} =   2-\frac dp >0$ by Morrey's Theorem (see e.g.\ \cite[Satz 6.25]{dobrowolski2010angewandte}).

Rescaling to the original coordinates, we find that $|\hat u (x) - \hat u(y)| \leq C\,\big(|x-y|/\rho\big)^\alpha$ if $|x-y|<\rho$ and $B_{2\rho}(x)\subseteq\Omega$. We assume that $R\delta<\rho$, which is admissible since $\delta\ll \rho \ll \eps$ by assumption. Let us consider three index sets for the interface, the boundary, and the remainder of the domain:
\begin{align*}
I_{int} &:= \{i : 1\leq i \leq N_\delta \text{ s.t. }\exists\ x \in Q_i^\delta \text{ s.t.\ }B_{(R+1)\rho}(x)\subseteq \Omega\text{ and } \exists\ z\in Q_i^\delta \text{ s.t. }|u_\eps(z)|\leq \gamma\}\\
 I_{bd} &:=  \{i : 1\leq i \leq N_\delta \text{ s.t. there is no } x \in Q_i^\delta \text{ s.t. } B_{(R+1)\rho}(x)\subseteq \Omega\}\\
 I_{dom} &:=  \{1,\dots, N_\delta\}\setminus \big(I_{int}\cup I_{bd}\big).
\end{align*}
Then, by assumption we have
\[
\sum_{i\in I_{bd}\cup I_{dom}} \int_{Q_i^\delta}\frac\eps2\,\|\nabla \hat u\|^2 + \frac{W_\delta(x, \hat u)}\eps\dx \geq \frac{|\Omega|\,\inf_{x\in\Omega, \gamma \leq |u|\leq M} W_\delta(x, u)}\eps \gtrsim  -\frac\delta\eps.
\]
Furthermore, at the interface we note that if $i\in I_{int}$ and $\eps$ (and hence $\rho, \delta$) are small enough, then $|\hat u_\eps| \leq (1+\gamma)/2$ on $Q_i^\delta$ due to the uniform H\"older regularity bound on small scales. We conclude that 
\begin{align*}
\sum_{i\in I_{int}}& \int_{Q_i^\delta} \frac\eps 2 \,\|\nabla \hat u\|^2 + \frac{W_\delta(x, \hat u)}\eps \dx\\
	&\geq \int_{\bigcup_{i\in I_{int}} Q_i^\delta} \frac{(1-C_P\alpha)\eps} 2 \,\|\nabla \hat u\|^2 + \frac{W_{hom}(\hat u) }\eps\dx \\&\hspace{2cm}
	- C\,\left(\frac{\delta}{\eps}\right)^2\frac 1{\alpha\eps}\,\left|\bigcup_{i \in I_{int}} Q_i^\delta\right|
	 - \sum_{i=1}^{N_\delta}\left|\int_{Q_i^\delta} \frac{W_\delta(x,\langle \hat u\rangle) - W_{hom}(\langle \hat u\rangle)}\eps\dx\right|.
\end{align*}
Selecting $\alpha = \delta/\eps$, we note that 
\[
\int_{\bigcup_{i\in I_{int}}Q_i^\delta} \frac{W_{hom}(\hat u) }\eps\dx- C\,\left(\frac{\delta}{\eps}\right)\frac 1{\eps}\,\left|\bigcup_{i \in I_{int}} Q_i^\delta\right|
\geq \frac1\eps \left(\min_{|u| \leq (1+\gamma)/2} W(u) - C\,\frac\delta\eps\right) \left|\bigcup_{i\in I_{int}} Q_i^\delta\right|.
\]
If $\delta/\eps$ is small enough, we conclude that $\big|\cup_{i\in I_{int}} Q_i^\delta\big| \lesssim \eps$. Furthermore, for fixed $\Omega', \gamma$, we find that $\{-\gamma < \hat u_\eps < \gamma\}\cap \Omega' \subseteq \bigcup_{i\in I_{int}} Q_i^\delta$ and hence
\begin{align*}
\F_{\eps, \delta_\eps}(\hat u_\eps) &= \sum_{i=1}^{N_\delta} \int_{Q_i^\delta} \frac\eps2 \,\|\nabla \hat u_\eps\|^2 + \frac{W_\delta(x,\hat u)} \eps\dx\\
	&\geq \left(1- C_P\frac\delta\eps\right)\int_{\{-\gamma < \hat u_\eps < \gamma\}\cap \Omega'} \frac\eps2 \,\|\nabla \hat u_\eps\|^2 + \frac{W_{hom}(u)} \eps \dx - C\,\frac\delta\eps.\qedhere
\end{align*}
\end{proof}

\begin{remark}[On the boundary regularity of $W_\delta$]\label{remark boundary stuffs}
We note that 
\[
\bigcup_{i\in I_{bd}}Q_i^\delta \subseteq \big\{x : \dist(x, \partial\Omega) < (R+1)\rho\big\} \qquad \Ra\quad \left| \bigcup_{i\in I_{bd}}Q_i^\delta\right| \leq C\rho.
\]
If the upper Minkowski content of $\partial\Omega$ is bounded, i.e.\ if
\[
\limsup_{\delta\to 0^+} \big|B_\delta(\partial\Omega)\big| := \limsup_{\delta\to 0^+} \left| \{x \in\Omega : \dist(x, \partial\Omega)<\delta\}\right| <\infty,
\]
then it would suffice to assume that $W_\delta$ is bounded from below in a $\delta$-neighborhood of $\partial\Omega$. 
This is the case for all sets with sufficiently smooth boundary (for instance, Lipschitz sets). We could allow for greater flexibility with irregular potentials at regular boundaries. Additionally, we do not have to require shape regularity for $Q_i^\delta$ within distance $\sim\rho\geq \delta$ of the boundary.
\end{remark}

\begin{remark}[On the minimizing movements step]
We recall that $H^1(\Omega)$ does not embed into $L^\infty(\Omega)$ in dimension $d\geq 2$. For instance, the sequence of functions
\[
v_r(x) = \min\left\{1,  \frac{\log\|x\|}{\log r}\right\} \cdot 1_{\{\|x\|\leq 1\}}
\]
converges to $0$ in $H^1(\Omega)$ as $r\to 0$, but $u_r(0) = 1$ for all $r>0$. Using functions of this type, we could construct finite energy sequences for which ever cell $Q_i^\delta$ contains a point $x \in u_\eps^{-1}(0)$. The minimizing movements step is therefore required to improve $u_\eps$ to $\hat u_\eps$ where a statement like Lemma \ref{lemma liminf} can be proved.
\end{remark}

\subsection{The main result}
Lemma \ref{lemma liminf} suffices to prove the critical $\Gamma-\liminf$ inequality and thus the main result.

\begin{theorem}\label{theorem gamma-convergence}\label{theorem main}
Assume that the conditions of Section \ref{section assumptions} are met.

\begin{enumerate}
\item If $\liminf_{\eps \to 0^+} \F_\eps(u_\eps) < +\infty$, there exists a subsequence of $u_\eps$ and a function $u\in BV(\Omega; \{-1,1\})$ such that $u_\eps \xrightarrow{d_M} u$ and 

\item With respect to the notion of $d_M$ convergence, we have
\[
\Gamma-\lim_{\eps\to 0^+} \F_\eps = \Gamma-\lim_{\eps\to 0^+} \F_{\eps, hom} = c_{hom}\cdot \Per_\Omega
\]
where $c_{hom} = \int_{-1}^1\sqrt{2\,W_{hom}(u)}\d u$.
\end{enumerate}
\end{theorem}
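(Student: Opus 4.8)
The strategy is to reduce everything to the Modica–Mortola theorem (Theorem \ref{theorem modica mortola}), but applied to the homogenized functional $\F_{\eps,hom}$, which has a classical (spatially homogeneous) structure. Since $W_{hom}$ is Lipschitz on $[-1,1]$, vanishes exactly at $\pm 1$, is positive in between, and is monotone near the wells, the classical theory applies to $\F_{\eps,hom}$ and gives $\Gamma\text{-}\lim_{\eps\to 0^+}\F_{\eps,hom}=c_{hom}\cdot\Per_\Omega$ with $c_{hom}=\int_{-1}^1\sqrt{2W_{hom}(u)}\,\d u$, together with the compactness statement (1) for sequences of bounded $\F_{\eps,hom}$-energy. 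So the real work is comparing $\F_\eps$ with $\F_{\eps,hom}$ in the $\Gamma$-limit, in both directions, with respect to $d_M$-convergence.

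\emph{Compactness and the $\liminf$ inequality.} Suppose $\liminf_\eps\F_\eps(u_\eps)<\infty$. Passing to a subsequence realizing the lower limit and replacing $u_\eps$ by $\hat u_\eps$ via Lemma \ref{lemma liminf} (noting $\|\hat u_\eps-T_Mu_\eps\|_{L^p}\to 0$, so any $L^1$-limit of $T_Mu_\eps$ equals that of $\hat u_\eps$), we get
\[
\liminf_{\eps\to 0^+}\F_\eps(u_\eps)\ \geq\ \liminf_{\eps\to 0^+}\int_{\{-\gamma<\hat u_\eps<\gamma\}\cap\Omega'}\frac\eps2\|\nabla\hat u_\eps\|^2+\frac{W_{hom}(\hat u_\eps)}\eps\,\dx
\]
for every $\Omega'\Subset\Omega$ and $\gamma\in(0,1)$. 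The right-hand integrand is pointwise $\geq$ the classical Modica–Mortola integrand (restricted to the interfacial region), so by the Young-inequality trick $\tfrac\eps2|\nabla v|^2+\tfrac{W_{hom}(v)}\eps\geq\sqrt{2W_{hom}(v)}\,|\nabla v|=|\nabla(\Phi\circ v)|$ with $\Phi(t)=\int_0^t\sqrt{2W_{hom}(s)}\,\d s$, the $\hat u_\eps$ are bounded in $L^\infty$ and the functions $\Phi\circ\hat u_\eps$ are bounded in $BV$; this yields an $L^1$-convergent subsequence, hence a limit $u$, which the monotonicity of $W_{hom}$ near $\pm1$ forces to take values in $\{-1,1\}$ a.e. (the usual argument: bounded energy forces the measure of $\{\gamma_0<|\hat u_\eps|<1-\gamma_0\}$ to vanish). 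This gives (1). For the $\liminf$ inequality, apply the classical lower bound on the interfacial region, exhaust $\Omega$ by $\Omega'\Subset\Omega$ and let $\gamma\to1$, using that $W_{hom}$ concentrates all its transition cost where $|u|$ is bounded away from $1$; one recovers $\liminf_\eps\F_\eps(u_\eps)\geq c_{hom}\cdot\Per_\Omega(\{u=1\})$. The same chain with $W_\delta$ replaced by $W_{hom}$ gives the $\liminf$ inequality for $\F_{\eps,hom}$ directly from Theorem \ref{theorem modica mortola}.

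\emph{The $\limsup$ inequality (recovery sequence).} Here one shows the two functionals share a recovery sequence. Given $u\in BV(\Omega;\{-1,1\})$, take the standard Modica–Mortola recovery sequence $u_\eps$ for $\F_{\eps,hom}$: it equals the optimal one-dimensional profile $\eta(\sdist_E(x)/\eps)$ (with $E=\{u=1\}$) in an $O(\eps|\log\eps|)$-tube around $\partial^*E$ suitably cut off, and is constant $\pm1$ away from the interface, with $\F_{\eps,hom}(u_\eps)\to c_{hom}\cdot\Per_\Omega(\{u=1\})$ and $u_\eps\to u$ in $L^1$; such $u_\eps$ is $O(1/\eps)$-Lipschitz and satisfies $|u_\eps|\leq1$, hence $T_Mu_\eps=u_\eps$ and $u_\eps\xrightarrow{d_M}u$. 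It remains to check $\limsup_\eps\F_\eps(u_\eps)\leq\limsup_\eps\F_{\eps,hom}(u_\eps)$ up to $o(1)$. Split $\Omega$ into the cells $Q_i^\delta$. Away from the interface $u_\eps$ is (essentially) constant $\pm1$ on each cell, so the gradient term vanishes and the potential term is controlled by the compatibility bound \eqref{eq homogenization bound} applied to the constant value $\pm1$, contributing at most $C\delta/\eps\to0$. On the $O(\eps|\log\eps|)$-many cells meeting the interface, $u_\eps$ is $O(1/\eps)$-Lipschitz so it varies by $O(\delta/\eps)=o(1)$ across a single cell; replacing $u_\eps$ by its cell average $\langle u_\eps\rangle_{Q_i^\delta}$ inside the potential term costs at most $\tfrac1\eps\int_{Q_i^\delta}L_\delta(x)|u_\eps-\langle u_\eps\rangle|\,\dx$, which, after Hölder in the exponent $p$ and the hypothesis $\fint_{Q_i^\delta}L_\delta^p\leq C$ together with $\|u_\eps-\langle u_\eps\rangle\|_{L^{p'}(Q_i^\delta)}\lesssim\delta\|\nabla u_\eps\|_{L^{p'}(Q_i^\delta)}\lesssim\delta\eps^{-1}|Q_i^\delta|^{1/p'}$, is summably small over the $O(\eps|\log\eps|)$ interfacial cells (total bound $O(\delta|\log\eps|/\eps)\to0$); then \eqref{eq homogenization bound} replaces $\sum_i\int_{Q_i^\delta}W_\delta(x,\langle u_\eps\rangle)$ by $\sum_i\int_{Q_i^\delta}W_{hom}(\langle u_\eps\rangle)$ up to $C\delta$, and reversing the averaging step (now with $W_{hom}$ and its Lipschitz constant) returns $\int W_{hom}(u_\eps)$. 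Hence $\F_\eps(u_\eps)\leq\F_{\eps,hom}(u_\eps)+o(1)$, giving $\limsup_\eps\F_\eps(u_\eps)\leq c_{hom}\cdot\Per_\Omega(\{u=1\})$, and the same sequence is a recovery sequence for $\F_{\eps,hom}$ by construction.

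\emph{Main obstacle.} The delicate point is the $\liminf$ direction: Lemma \ref{lemma liminf} only controls the interfacial region $\{-\gamma<\hat u_\eps<\gamma\}\cap\Omega'$, with two independent parameters $\Omega'\Subset\Omega$ (losing the boundary layer) and $\gamma<1$ (losing the near-pure-phase region), plus an additive error $C\delta/\eps\to0$. One must argue that neither loss destroys the sharp constant — i.e. that as $\gamma\to1$ the Modica–Mortola lower bound on the truncated profile still converges to $c_{hom}$ (this uses monotonicity of $W_{hom}$ near $\pm1$ so that no transition cost hides in $\{|u|>\gamma\}$), and that exhausting $\Omega$ by $\Omega'$ recovers the full relative perimeter $\Per_\Omega$. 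A standard diagonal argument in $(\Omega',\gamma,\eps)$ closes this. The recovery-sequence side is comparatively routine, being essentially a perturbation-of-the-classical-construction estimate driven by $\delta/\eps\to0$ and the integrability of $L_\delta$.
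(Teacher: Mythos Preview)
Your outline matches the paper's proof closely: compactness and the $\liminf$ inequality come from Lemma~\ref{lemma liminf} via the Modica--Mortola trick $\tfrac\eps2|\nabla v|^2+\tfrac1\eps W_{hom}(v)\geq|\nabla(G_{hom}\circ v)|$ (your $\Phi$ is the paper's $G_{hom}$), BV-compactness applied to $G_{hom}(T_\gamma\hat u_\eps)$, and then letting $\gamma\nearrow1$ and $\Omega'\nearrow\Omega$; the $\limsup$ inequality uses the optimal-profile recovery sequence and a cell-by-cell comparison of $W_\delta$ with $W_{hom}$ through the compatibility bound~\eqref{eq homogenization bound}. All of this is exactly the paper's route.

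There is, however, a genuine gap in your $\limsup$ estimate. You cut off the profile at distance $O(\eps|\log\eps|)$ from the interface and then bound the replacement error on the interfacial cells by $O(\delta|\log\eps|/\eps)$. But the only standing hypothesis is $\delta_\eps/\eps\to0$, and this does \emph{not} force $\delta_\eps|\log\eps|/\eps\to0$: take $\delta_\eps=\eps/|\log\eps|$. So your error term need not vanish. The paper circumvents this by working with the \emph{uncut} profile $u^\eps(x)=\phi\big(sd_E(x)/\eps\big)$ and splitting the cells according to a fixed threshold $W_{hom}(u^\eps)>\eta$, so that the genuinely interfacial cells have total measure $O(\eps)$ (no logarithm). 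On the remaining cells one does not use the crude $O(1/\eps)$-Lipschitz bound but the pointwise decay of $\phi'$: since $\sqrt{2W_{hom}}$ is monotone near the wells, on a non-interfacial cell $|u^\eps(x)-u^\eps(x_i)|\lesssim \phi'\big(sd_E(x)/(2\eps)\big)\cdot\delta/\eps$, and the change of variables $x\mapsto(\pi_{\partial E}(x),\,sd_E(x))$ together with $\int_\R\phi'(t)\,dt=2<\infty$ gives $\int_\Omega\phi'\big(sd_E(x)/(2\eps)\big)\,dx=O(\eps)$. The upshot is a total error $O(\delta/\eps)$ with no logarithmic loss. You should either adopt this refinement, or at minimum define the interfacial tube by a fixed threshold (width $O(\eps)$) rather than $O(\eps|\log\eps|)$ and handle the tail via the integrability of $\phi'$.
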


\begin{proof}
{\bf Step 1. Compactness.} Assume that $\liminf_{\eps\to \infty} \F_\eps(u_\eps)$ is finite. Select $\hat u_\eps \in H^1(\Omega; [-M, M])$ as in Lemma \ref{lemma liminf}. Fix $\gamma\in (0,1)$ for now. We note that
\begin{align*}
\liminf_{\eps\to 0^+} \F_\eps (u_\eps) &\geq \int_{\{-\gamma < \hat u_\eps < \gamma\}} \frac\eps2\,\|\nabla \hat u_\eps\|^2 + \frac{W_{hom}(\hat u_\eps)}\eps\dx\\
	&\geq \int_{\{-\gamma < \hat u_\eps < \gamma\}} \sqrt{2\,W_{hom}(\hat u_\eps)}\,\|\nabla u_\eps\|\dx\\
	&= \int_\Omega \sqrt{2\,W_{hom}(T_\gamma \hat u_\eps)}\,\|\nabla T_\gamma u_\eps\|\dx
\end{align*}
since $\nabla T_\gamma u_\eps=0$ whenever $W_{hom}(T_\gamma \hat u_\eps) \neq W_{hom}(\hat u_\eps)$. We denote by $G_{hom}$ an anti-derivative of $\sqrt{2W_{hom}}$ and conclude that
\[
\int_\Omega \|\nabla G_{hom}(T_\gamma \hat u_\eps)\|\dx \leq \liminf_{\eps\to 0^+} \F_\eps (u_\eps)  < +\infty.
\]
Since $T_\gamma \hat u_\eps$ is bounded in $L^\infty(\Omega)$, so is $G_{hom}(T_\gamma \hat u_\eps)$ and thus it is bounded in $BV(\Omega)$. We use the compact embedding of $BV$ into the Lebesgue space $L^1$ \cite[Theorem 10.1.4]{attouch2014variational} to extract an $L^1$-convergent subsequence of $G_{hom}(T_\gamma\hat u_\eps)$. The limit point is a function of bounded variation by \cite[Theorem 10.1.1]{attouch2014variational}. The convergence of $\hat u_\eps$ follows immediately from the invertibility of $G_{hom}$ (since $G_{hom}' = \sqrt{2\,W_{hom}}>0$) and  boundedness. Since the subsequence is bounded in $L^\infty$ and converges in $L^1$, it converges in $L^p$ for all $p<\infty$ as in Lemma \ref{lemma liminf}.

Note that, if $\gamma' < \gamma$ and $T_\gamma u_\eps \to u^\gamma$ as $\eps\to 0$, then $T_{\gamma'}u_\eps = T_{\gamma'}T_\gamma u_\eps \to T_{\gamma'}u^\gamma$. We further note that by BV-compactness, also the limits of the truncated sequences have a limiting object $u= \lim_{\gamma\nearrow 1} u^\gamma \in BV(\Omega; [-1,1])$. This allows us to identify a single limiting object. Taking a sequence $\gamma_n\nearrow 1$ and nested subsequences in $\eps$, we find that there exists a function $u:\Omega\to [-1,1]$ and a subsequence in $\eps$ such that $T_\gamma u_\eps \to T_\gamma u$ for all $\gamma>0$.

In the same way as we treated the interfacial region where $W_{hom}$ is bounded away from zero in Lemma \ref{lemma liminf}, we can see that $\int_{\{|\hat u_\eps|>1/\gamma\}} W(\hat u_\eps)\dx\to 0$. We can therefore argue that $\hat u_\eps = T_Mu_\eps\to u$ in $L^p(\Omega)$ for all $p<\infty$. Similarly, we see that $\liminf_{\eps\to 0^+} \frac1\eps \int_{\{W_{hom}(\hat u_\eps)>\alpha\}} W_{hom}(\hat u_\eps)\dx < +\infty$. We selected a subsequence realizing the lower limit, and since $L^p$-convergence implies convergence almost everywhere (for a subsequence), this guarantees that $u$ only takes values in the potential wells, i.e.\ $u\in BV(\Omega;\{-1,1\})$. 

We have seen that $\hat u_\eps$ has a convergent subsequence. By Lemma \ref{lemma liminf}, this means that also $T_Mu_\eps$ has an $L^p$-convergent subsequence for all $p<\infty$. Naturally, more cannot be said without stronger assumptions: We did not rule out the scenario that $W(x,u) = 0$ for all $x$ and $|u|\geq 1$, in which case there is essentially no control over $u_\eps$ outside $(-M, M) = (-1,1)$.

{\bf Step 2. Liminf-inequality.} Assume that $u_\eps\xrightarrow{d_M} u$. Then clearly also $\hat u_\eps \xrightarrow {d_M}u$. We would like to argue that
\[
\liminf_{\eps\to 0^+}\F_{\eps,\delta_\eps}(u_\eps) \geq \liminf_{\eps\to 0^+} \F_{\eps, hom}(\hat u_\eps) \geq \big(\Gamma-\lim_{\eps\to 0^+}\F_{\eps, hom}\big)(u),
\]
but we lack the control over the full functional $\F_{\eps,hom}$ by $\F_{\eps,\delta_\eps}$ in Lemma \ref{lemma liminf}. However, we know that the energy of a low energy sequence for the Modica-Mortola functional localizes at the interface. As in the proof of compactness, we can argue that for fixed $\gamma\in(0,1)$ we have
\begin{align*}
\liminf_{\eps\to 0^+}\F_{\eps,\delta_\eps}(u_\eps) &\geq \liminf_{\eps\to 0^+} \int_\Omega \|\nabla G_{hom}(T_\gamma u_\eps)\| \dx \geq \int_\Omega \|\nabla G_{hom}(T_\gamma u)\|\dx
\end{align*}
by \cite[Theorem 10.1.1]{attouch2014variational}, where the integral on the right is interpreted in the generalized sense. Since $u$ only takes two values $\pm 1$, we see that $G_{hom}(T_\gamma u)$ only takes values $G_{hom}(\pm\gamma)$ and the functions are merely shifts in the $y$ direction:
\[
\int_\Omega \|\nabla G(T_\gamma u)\|\dx = \frac{G_{hom}(\gamma) - G_{hom}(-\gamma)}2\int_\Omega \|\nabla u\|\dx = c_{hom,\gamma}\,\Per_\Omega(\{u=1\})
\]
where $c_{hom,\gamma} = G_{hom}(\gamma) - G_{hom}(-\gamma)$ such that $c_{hom} = c_{hom,1}$. We use the fact that the jump of $u$ has height $2$, i.e.\ $\Per(\{u=1\}) = \frac12\int_\Omega\|\nabla u\|\dx$. Since the result holds for all $\gamma$, we can now take $\gamma\nearrow 1$ to see that $\liminf_{\eps\to 0^+}\F_{\eps,\delta_\eps}(u_\eps) \geq c_{hom}\,\Per_\Omega(\{u=1\})$.

{\bf Step 3. Limsup-inequality.} Let $\phi:\R\to\R$ be a solution of the ODE $\phi' = \sqrt{2\,W_{hom}(\phi)}$ such that $\phi(0) = 0$. The solution is unique on the set $\phi^{-1}(\R\setminus\{0\})$ since the square root function is locally  Lipschitz-continuous except at zero, but multiple solutions may exist if $\phi$ reaches $\pm 1$ in finite space. By design, also the truncation $T_1\phi$ is a solution, so we may assume that $\phi (x) \in [-1,1]$ for all $x$. We immediately conclude that $\phi$ is monotone increasing and Lipschitz-continuous since $0 \leq \phi' \leq \max_{u\in [-1,1]} W(u)$. Due to monotonicity, the limits $\lim_{x\to \pm\infty}\phi(x)$ exist and by contradiction, we see that $\lim_{x\to\pm \infty} \phi(x) = \pm 1$. 

By \cite[Theorem 1.24]{giusti1984functions} and the lower semi-continuity of the perimeter functional under $L^1$-convergence (i.e., lower semi-continuity of the norm under weak convergence), it suffices to prove the $\limsup$-inequality for functions $u = 1_E - 1_{\Omega\setminus E}$ where $E$ is a set with $C^2$-boundary $\partial E$. For such a set, we note that the signed distance function $sd_E(x) = \dist(x, \Omega\setminus E) - \dist(x, E)$ is Lipschitz-continuous on $\R^d$ and $C^2$-smooth in a neighborhood of $\partial E\cap \Omega$, where we have $\|\nabla sd_E\|\equiv 1$ -- see e.g. \cite[Chapter 14.6]{gilbarg1977elliptic}.

We fix a small $\eta>0$ and define $I= \{1,\dots, N_\delta\}$,
\[
u^\eps(x) = \phi\big(sd_E(x)/\eps\big)\qquad\text{and } I_{int} = \{i \in I : \exists\ x\in Q_i^\delta \text{ s.t. } W(u^\eps(x)) > \eta\}.
\]
For any $i\in I$ and any fixed point $x_i \in Q_i^\delta$, we note that
\begin{align*}
\int_{Q_i^\delta} W_\delta(x,u^\eps)\dx &= \int_{Q_i^\delta} W_\delta(x, u^\eps(x)) - W_\delta(x, u^\eps(x_i))  + W_\delta(x, u^\eps(x_i)) \\
	&\qquad \qquad - W_{hom}(u^\eps(x_i)) + W_{hom}(u^\eps(x_i)) - W_{hom}(u^\eps(x)) + W_{hom}(u^\eps(x))  \dx\\
	&\leq \int_{Q_i^\delta} \big(L_\delta(x) + L\big) \big|u^\eps(x) - u^\eps(x_i)\big| \dx + \int_{Q_i^\delta} W_{hom}(u^\eps(x))\dx\\
	&\qquad\qquad + \left| \int_{Q_i^\delta} W_\delta(x, u^\eps(x_i)) - W_{hom}(u^\eps(x_i))\dx\right|.
\end{align*}
 If $i\in I_{int}$, we use the fact that $u^\eps$ is $O(1/\eps)$-Lipschitz and that the diameter of $Q_i^\delta$ is proportional to $\delta$ to find that
\[
\int_{Q_i^\delta} \big(L_\delta(x) + L\big) \big|u^\eps(x) - u^\eps(x_i)\big| \dx \leq C\,|Q_i^\delta|\,\frac\delta\eps.
\]
Clearly $W(u^\eps(x))>0$ if and only if $sd_E(x) < c\eps$ where the constant $c$ depends on $\eta$, but not $\eps$. As previously, we note that $\sum_{i\in I_{int}} |Q_i^\delta|\leq C\eps$ for some $C>0$, so
\[
\sum_{i\in I_{int}} \int_{Q_i^\delta} \big(L_\delta(x) + L\big) \big|u^\eps(x) - u^\eps(x_i)\big| \dx = O\left(\frac\delta\eps\right).
\]
Away from the interface, the function $u^\eps$ has much smaller variation than the Lipschitz estimate suggests. If $x, x'\in Q_i^\delta$ for the same $\delta>0$ and $i\notin I_{int}$, then $sd_E(x)$ and $sd_E(x')$ have the same sign. Without loss of generality, we assume that both are positive. We find that
\[
|u^\eps(x) - u^\eps(x')| = \left|\phi\big(sd_E(x)/\eps\big) - \phi\big(sd_E(x')/\eps\big)\right| \leq \|\phi'\|_{L^\infty(\min\{sd_E(x), sd_E(x')\})} \,\|x-x'\|/\eps.
\]
Since $\sqrt{2W}$ is monotone in a neighborhood of the potential wells, so is $\phi'$ and the estimate becomes
\[
|u^\eps(x) - u^\eps(x')| \leq \phi'\left(\frac{\min\{sd_E(x), sd_E(x')\}}\eps\right)\,\frac{R\delta}\eps \leq \phi'\left(\frac{sd_E(x)}{2\eps}\right)
\]
if $\delta$ is small enough with respect to $\eps$. We can therefore estimate
\begin{align*}
\sum_{i\in I\setminus I_{int}} \int_{Q_i^\delta} \big(L_\delta(x) + L\big) \big|u^\eps(x) - u^\eps(x_i)\big| \dx 
	&\leq \frac{R\delta} \eps\int_{\Omega}\phi'\left(\frac{sd_E(x)}{2\eps}\right)\dx.
\end{align*}
To compute the integral, we change coordinates. Let $\nu_x$ be the interior unit normal to $\partial E$ and note that the map $\Psi :\partial E\times \R\to \R^d$, $\Psi(x, t) = x+ t\,\nu_x$ is surjective since $\partial E$ is a compact $C^2$-manifold (namely, if $z\in \partial E$ is such that $\|x-z\| = \dist(x, \partial E)$, then $x = \Psi(z, sd_E(x))$). Since $\Psi$ is $C^1$-smooth, its functional determinant is bounded on the compact set $\overline\Omega$. In particular, we have
\begin{align*}
\sum_{i\in I\setminus I_{int}} \bigg|\int_{Q_i^\delta} \frac{W_{hom}(u^\eps(x)) - W_{hom}(x, u^\eps(x))}\eps \dx \bigg|
	&\leq C\,\frac\delta\eps\,\int_{\partial E} \int_\R \phi'\left(\frac t\eps\right)\,\frac1\eps \dt \d H^{d-1}_x + O\left(\frac\delta\eps\right)\\
	&\qquad + \sup_{u\in \mathcal U_\delta}  \sum_{i=1}^{N_\delta} \left|\int_{Q^\delta_i} W_\delta(x, u_i) - W_{hom}(u_i)\dx \right|\\
	&= 2C\,\H^{d-1}(\partial E)\,\frac\delta\eps + O\left(\frac\delta\eps\right) = O\left(\frac\delta\eps\right)
\end{align*}
where $\H^{d-1}$ denotes the $d-1$-dimensional Hausdorff measure (the natural area measure on $\partial E$).
\end{proof}

\subsection{Extensions}
We did not enforce boundary conditions in the functionals $\F_\eps, \F_{\eps, hom}$. If we defined the functionals 
\begin{equation}\label{eq dirichlet boundary}
\F_{\eps,\delta}:L^1(\Omega)\to\R, \qquad \F_{\eps,\delta}(u) = \begin{cases} \int_\Omega \frac\eps2 \|\nabla u\|^2 + \frac{W_\delta(x,u)}\eps\dx &\text{if } u+1 \in H_0^1(\Omega)\\ +\infty &\text{else}\end{cases}
\end{equation}
instead, the result of Corollary \ref{theorem gamma-convergence} would still hold, but with the perimeter functional rather than the perimeter relative to $\Omega$:

\begin{corollary}[Dirichlet boundary conditions]
Assume that $\Omega, W_\delta, W_{hom}$ satisfy the conditions of Section \ref{section assumptions}. Let $\delta_\eps$ be a family of values such that $\lim_{\eps\to 0^+} (\delta_\eps/\eps) = 0$, and let $\F_\eps := \F_{\eps, \delta_\eps}$ as in \eqref{eq dirichlet boundary} (and $\F_{\eps, hom}$ analogously with homogeneous Dirichlet boundary condition). Then, with respect to $d_M$-convergence, we have
\[
\Gamma-\lim_{\eps\to 0^+} \F_\eps = \Gamma-\lim_{\eps\to 0^+} \F_{\eps, hom} = c_{hom}\cdot \Per.
\]
\end{corollary}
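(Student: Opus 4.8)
The plan is to reduce everything to Theorem \ref{theorem main} by enlarging the domain, so that the Dirichlet datum $-1$ becomes a constant extension across $\partial\Omega$. First I would fix a bounded open set $\wt\Omega$ with $\overline\Omega\subset\wt\Omega$, tile $\wt\Omega\setminus\overline\Omega$ by $\delta$-cubes (enlarging the family $\{Q_i^\delta\}$ accordingly), and set $\wt W_\delta(x,\cdot):=W_{hom}$ for $x\in\wt\Omega\setminus\Omega$ and $\wt W_\delta:=W_\delta$ on $\Omega$. Since $W_{hom}$ itself satisfies all requirements of Section \ref{section assumptions} (with $L_\delta\equiv\mathrm{Lip}(W_{hom})$ and vanishing discrepancy in \eqref{eq homogenization bound}), the triple $(\wt\Omega,\wt W_\delta,W_{hom})$ again satisfies those hypotheses — provided $\partial\Omega$ is regular enough that the cells abutting it can be chosen shape-regular, which holds e.g.\ for Lipschitz $\partial\Omega$, cf.\ Remark \ref{remark boundary stuffs}. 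Let $\wt\F_\eps$ and $\wt\F_{\eps,hom}$ denote the corresponding \emph{unconstrained} functionals on $L^1(\wt\Omega)$, to which Theorem \ref{theorem main} applies. The bookkeeping I would use throughout: if $v+1\in H_0^1(\Omega)$ and $\wt v$ is the extension of $v$ by the constant $-1$ to $\wt\Omega$, then $\wt v\in H^1(\wt\Omega)$ (approximate $v+1$ by $C_c^\infty(\Omega)$ and extend by zero) and $\wt\F_\eps(\wt v)=\F_\eps(v)$, because the integrand vanishes on $\wt\Omega\setminus\Omega$ where $\wt v\equiv-1$; moreover $\{\wt v=1\}=\{v=1\}$ is compactly contained in $\wt\Omega$, so $\Per_{\wt\Omega}(\{\wt v=1\})=\Per(\{v=1\})$, the full perimeter.

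For the $\liminf$-inequality (and compactness, which comes along for free) I would take $u_\eps$ with $\F_\eps(u_\eps)<\infty$ — whence $T_Mu_\eps+1\in H_0^1(\Omega)$, since $M\geq1$, and $\F_\eps(T_Mu_\eps)\leq\F_\eps(u_\eps)$ — pass to the extensions $\wt u_\eps$, and apply the compactness and $\liminf$ parts of Theorem \ref{theorem main} on $\wt\Omega$. This yields, along a subsequence, a limit $\wt u\in BV(\wt\Omega;\{-1,1\})$ which is $-1$ outside $\Omega$; with $u:=\wt u|_\Omega$ this gives $u\in BV(\Omega;\{-1,1\})$, $u_\eps\xrightarrow{d_M}u$, and
\[
\liminf_{\eps\to 0^+}\F_\eps(u_\eps)=\liminf_{\eps\to 0^+}\wt\F_\eps(\wt u_\eps)\ \geq\ c_{hom}\,\Per_{\wt\Omega}(\{\wt u=1\})\ =\ c_{hom}\,\Per(\{u=1\}).
\]
The identical argument with $\wt\F_{\eps,hom}$ in place of $\wt\F_\eps$ handles $\F_{\eps,hom}$.

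For the $\limsup$-inequality — the only place the boundary condition is genuinely felt — I would mimic Step 3 of the proof of Theorem \ref{theorem main}. Assuming $\Per(\{u=1\})<\infty$ (otherwise there is nothing to prove), I would reduce, by an inner approximation of $\{u=1\}$ by open sets $F\Subset\Omega$ with $C^2$-boundary for which $1_F\to1_{\{u=1\}}$ in $L^1(\Omega)$ and $\Per(F)\to\Per(\{u=1\})$, together with a diagonal argument, to $u=1_F-1_{\Omega\setminus F}$. With $\phi$ the optimal profile of Step 3 and $g_\eps$ obtained from $\phi(\cdot/\eps)$ by freezing it to the value $\pm1$ for arguments past $\pm\eps L_\eps$ (with $L_\eps\to\infty$ slowly, so that $g_\eps$ is Lipschitz, equals $\pm1$ outside $[-\eps L_\eps,\eps L_\eps]$, and induces a transition layer of energy $c_{hom}+o(1)$), the sequence $u_\eps(x):=g_\eps(sd_F(x))$ is $O(1/\eps)$-Lipschitz, identically $-1$ on a neighbourhood of $\partial\Omega$ in $\Omega$ (there $sd_F$ lies below a negative constant, hence below $-\eps L_\eps$ for small $\eps$), so that $u_\eps+1\in H_0^1(\Omega)$, and it converges to $u$ in $L^1$. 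Running the estimates of Step 3 verbatim — telescoping through $W_\delta(\cdot,u_\eps(x_i))$, using the Lipschitz bound on $W_\delta$, $|Q_i^\delta|\lesssim\delta^d$, and the compatibility condition \eqref{eq homogenization bound} (now also for the constant vectors $u_i=\pm1$, which is what controls the bulk since $W_{hom}(\pm1)=0$ while $W_\delta(\cdot,\pm1)$ need not vanish pointwise), followed by the tubular-neighbourhood change of variables $\Psi(x,t)=x+t\nu_x$ near $\partial F$ — gives $\limsup_{\eps\to0^+}\F_\eps(u_\eps)\leq c_{hom}\,\H^{d-1}(\partial F)+o(1)=c_{hom}\Per(F)$. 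The diagonal argument then yields $\limsup_{\eps\to0^+}\F_\eps(u_\eps)\leq c_{hom}\Per(\{u=1\})$, and $\F_{\eps,hom}$ is handled identically on setting $W_\delta=W_{hom}$.

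I expect the main obstacle to be the inner approximation in the $\limsup$-step: constructing smooth $F\Subset\Omega$ whose \emph{full} perimeter $\Per(F)=\H^{d-1}(\partial F)$ converges to $\Per(\{u=1\})$, rather than to the smaller relative perimeter $\Per_\Omega(\{u=1\})$ — this is precisely the mechanism by which the Dirichlet condition contributes the ``virtual interface'' that $u$ creates along $\partial\Omega$, and it is a classical but nontrivial point in the theory of the Dirichlet problem for the Modica--Mortola functional (see e.g.\ \cite{modica1987gradient}). It is also the one point where a mild regularity hypothesis on $\partial\Omega$ — Lipschitz, or at least finiteness / rectifiability of its $\H^{d-1}$-measure, cf.\ Remark \ref{remark boundary stuffs} — genuinely enters; the homogenization itself contributes nothing new here.
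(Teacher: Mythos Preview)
The paper states this corollary without proof, treating it as an immediate consequence of Theorem \ref{theorem main}; there is no argument in the text to compare against. Your proposal is a correct and standard way to supply the missing details: the extension-to-a-larger-domain trick cleanly reduces the constrained $\liminf$ and compactness to the unconstrained Theorem \ref{theorem main} (and, importantly, sidesteps the question of whether the minimizing-movements step in Lemma \ref{lemma liminf} preserves the Dirichlet condition), while the $\limsup$ via inner approximation by smooth $F\Subset\Omega$ with $\Per(F)\to\Per(\{u=1\})$ is the classical route for the Dirichlet Modica--Mortola problem. You have also correctly isolated the one genuine technical input the paper glosses over --- the inner approximation with convergence of the \emph{full} perimeter, and the attendant mild regularity of $\partial\Omega$ --- and flagged it appropriately. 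Nothing further is needed.
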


Many phenomena have multiple length scales. For instance, crystalline materials have `small' defects (inclusions, vacancies, individual dislocations) as well as larger defects such as grain boundaries in poly-crystalline materials. The same proof techniques used above apply if there are multiple small length scales. 

\begin{remark}[Multiple small length scales]\label{remark multiple length scales}
Assume that $\vec \delta = (\delta_1,\dots, \delta_n)$ is a collection of small length scales such that $\max_i \delta_i \ll \eps$. We assume that 

\begin{itemize}
\item the domain $\Omega$ can be partitioned into collections $\{Q_j^{\delta_i}\}_{j=1}^{N_{\delta_i}}$ for all $i$,
\item we are given potentials $W_{\delta_i}$ and $W_{hom,i}$ for all $i$, and
\item the necessary compatibility conditions of Section \ref{section assumptions} hold.
\end{itemize}

Then the functionals
\[
\F_{\eps, \vec \delta}(u) = \int_\Omega \frac\eps2\,\|\nabla u\|^2 + \frac1\eps \sum_{i=1}^n  W_{i}\left(\frac x{\delta_i}, u\right)\dx, \qquad 
\F_{\eps, hom}(u) = \int_\Omega \frac\eps2\,\|\nabla u\|^2 + \frac1\eps \sum_{i=1}^n  W_{hom,i}\left(u\right)\dx
\]
have the same $\Gamma$-limit. The same proof as above goes through, and we may require that $W_i\geq -C\delta_i$ for some $i$ and $\delta_i\ll \eps^{3/2}$ for others. This allows us to take care of various kinds of effects on several length scales.
 
As mentioned previously, this extension is relevant even in periodic homogenization with two length scales $\delta_1, \delta_2$ if $\delta_1/\delta_2\notin \mathbb Q$ since then there is no shared periodic reference configuration. However, even if $\delta_1/ \delta_2\in \mathbb Q$, the shared periodic domain may require a length-scale $\gg \max\{\delta_1, \delta_2\}$, possibly much larger than $\eps$. Thus, the flexibility of a `regularity'-driven approach is of great benefit in this setting.
\end{remark}

\begin{remark}[Multiple mixed length scales]
At no point did we make use of the fact that $W_{hom}$ does not depend on $x$. In Section \ref{section assumptions}, we could for instance allow for potentials $W_{\delta, \ell}$ with two length-scales and $W_{hom, \ell}$ where the shorter length-scale is no longer resolved, or potentials $W_{hom}(x,u)$ which vary in space on the macroscopic scale. For a simple statement of an extended result, we modify the assumptions as follows. 

\begin{enumerate}
\item [(1--3)] We retain the assumptions on $\Omega$ and the partition $Q_i^\delta$.
\setcounter{enumi}{3}

\item[(4--7)] We retain the assumptions on the double-well potential, but index it by two length-scales $W_{\delta, \ell}$ rather than just one.

\item [(8--10)] We consider a homogenized potential $W_{hom,\ell}$ which may still depend on $x$. We assume that $W_{hom, \ell}$ is measurable in $x$ and Lipschitz-continuous in $u$, uniformly in $x$, and that $W_{hom,\ell}(x, -1) = W_{hom,\ell}(x, 1) = 0 < W_{\hom,\ell}(x,u)$ for all $x\in\Omega$ and $u\neq \pm 1$. 

We add the following technical assumption: {\em Let $\ell_\eps$ be a parametrized family of length scales. The functionals
\[
\F_{\eps, \hom}(u) = \begin{cases} \int_\Omega \frac\eps2\,\|\nabla u\|^2 + \frac{W_{hom, \ell_\eps}(u)}\eps\dx & u\in H^1(\Omega)\\ +\infty &\text{else}\end{cases}
\]
converge to a limiting functional of the form
\[
\F(u) = \begin{cases}  \int_{\partial^*\{u=1\}} c(x, \nabla u/\|\nabla u\|)\,\d\H^{d-1} & \text{if }u\in BV(\Omega; \{-1,1\})\\ +\infty &\text{else}\end{cases}
\]
 in the topology of $d_M$-convergence where $c$ is a strictly positive function and $\partial^*E$ denotes the reduced boundary of an open set $E$. Additionally, for every $u \in BV(\Omega; \{-1,1\})$, there exists a sequence $u_\eps^*\in H^1(\Omega; [-M, M])$ such that 
\[
u_\eps^*\xrightarrow{L^1(\Omega)}u, \qquad \lim_{\eps\to 0^+} \F_{\eps,hom}(u_\eps^*) = \F(u)\qquad \text{and } |u_\eps^*(x) - u_\eps^*(y)|\leq \frac C\eps \,\|x-y\|
\]
for all $x,y\in \Omega$, i.e. there exists a sufficiently regular recovery sequence for $\F_{\eps,hom}$.
}

\item[(11)] We retain assumption (11), with the slight modification that $W_{hom}$ may also depend on $x$.
\end{enumerate} 

By the same proof as above, we conclude that the functionals
\[
\F_{\eps}(u) = \begin{cases} \int_\Omega \frac\eps2\,\|\nabla u\|^2 + \frac{W_{\delta_\eps, \ell_\eps}(u)}\eps\dx & u\in H^1(\Omega)\\ +\infty &\text{else}\end{cases}
\]
satisfy
\[
\Gamma(d_M)-\lim_{\eps\to 0^+}\F_{\eps} = 
\Gamma(d_M)-\lim_{\eps\to 0^+}\F_{\eps, hom} = \F(u).
\]
This allows us to consider, for instance, $W_{hom, \ell_\eps}(x,u) = \widetilde W(x/\ell_\eps, u)$ with length scales $\eps\ll \ell_\eps$, or a single fixed potential which varies spatially on a slow macroscopic scale, i.e.\ for which $\ell_\eps\equiv 1/\diam(\Omega)$. Generically, the $\Gamma$-limit is not the Euclidean perimeter functional in either situation.
\end{remark}

\subsection{On scale separation and potentials which take negative values}
Let us briefly discuss a further extension which arises when we modify the energy functional.
We have seen in Example \ref{example voids} and the proof of Lemma \ref{lemma eps32} that the required scale separation $\delta\ll \eps^{3/2}$ if $W_\delta$ is allowed to become negative comes from the fact that the gradient term -- the square of a norm -- must control small oscillations measured essentially by the $L^1$-norm. This scaling incompatibility leads us to introducing a modified functional which does not suffer from the same deficiency:
\[
\widetilde \F_{\eps,\delta} (u) = \int_\Omega \frac\eps2\,\|\nabla u\|^2 + \frac{W_\delta(x,u)}\eps + \sqrt{\frac\delta\eps} \,\|\nabla u\|\dx.
\]
Naturally, the gradient term vanishes in the $\Gamma$-limsup inequality for the optimal profile, but we will see that it equips us with sufficient regularity on small scales. 
If $W_\delta(x,u)$ is $L$-Lipschitz-continuous in $u$ for all fixed $x$ with the same constant $L$, then 
\begin{align*}
\int_{Q_i^\delta} \frac\eps2& \,\|\nabla u\|^2 + \frac{W_\delta(x,u)}\eps + \sqrt{\frac\delta\eps}\,\|\nabla u\|\dx\\
	&\geq \int_{Q_i^\delta} \frac{W_\delta(x, \langle u\rangle)- L|u- \langle u\rangle|}\eps + \sqrt{\frac\delta\eps}\,\|\nabla u\|\dx\\
	&\geq \int_{Q_i^\delta} \frac{W_{hom}(u)}\eps \dx - \left|\int_{Q_i^\delta}\frac{W_\delta(x, \langle u\rangle)-W_{hom}(u) }\eps \dx\right| +\frac1\eps \int_{Q_i^\delta} \sqrt{\delta\eps}\|\nabla u\| -  L\,\big|u-\langle u\rangle\big| \dx
\end{align*}
The first term is non-negative and the second is under control by the compatibility between $W_\delta$ and $W_{hom}$. The third can be controlled if we assume that a Poincar\'e inequality holds for all $Q_i^\delta$ with the natural scaling at the length-scale $\delta$:
\begin{equation}\label{eq l1 poincare}
\int_{Q_i^\delta} |u-\langle u\rangle|\dx \leq C_P^1 \, \delta\int_{Q_i^\delta} \|\nabla u\|\dx.
\end{equation}
If $\eps/\delta$ is large enough, the third term is therefore non-negative. By the same proof as above, we can show the following.

\begin{corollary}\label{corollary modified modica mortola}
Assume that $Q_i^\delta, W_\delta, W_{hom}$ satisfy the conditions specified in Section \ref{section assumptions}, except for Condition \eqref{condition scale separation}, which we replace by assuming that 
\begin{itemize}
\item $W_\delta$ is bounded from below and
\item There exists $C_P^1>0$ such that \eqref{eq l1 poincare} holds for all $\delta>0$ and all $i=1,\dots, N_\delta$.
\end{itemize}
Furthermore, strengthen Condition \eqref{condition lipschitz} so that $W_\delta$ is Lipschitz-continuous in $u$ with Lipschitz-constant $L$ for all $x$.
Let $\delta_\eps \ll \eps$. Then
\[
\Gamma(L^1)-\lim_{\eps\to 0^+}\widetilde \F_{\eps,\delta_\eps} = \Gamma(L^1) -\lim_{\eps\to 0^+}\F_{\eps, hom}.
\]
\end{corollary}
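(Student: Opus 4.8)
The plan is to reproduce the three-step scheme of the proof of Theorem~\ref{theorem main} — compactness, $\Gamma$-$\liminf$, $\Gamma$-$\limsup$ — with the cell-wise estimate displayed just above playing the role that Lemma~\ref{lemma liminf} played there. The purpose of the extra term $\sqrt{\delta/\eps}\,\|\nabla u\|$ is that, via the $L^1$-Poincar\'e inequality \eqref{eq l1 poincare}, it dominates the oscillation term $\tfrac1\eps\,|u-\langle u\rangle_{Q_i^\delta}|$ which otherwise forces the scale separation $\delta\ll\eps^{3/2}$ in Example~\ref{example voids} and Lemma~\ref{lemma eps32}: one has $\int_{Q_i^\delta}\!\big(\sqrt{\delta\eps}\,\|\nabla u\|-L'\,|u-\langle u\rangle|\big)\dx\geq 0$ as soon as $\sqrt{\delta\eps}\geq C_P^1 L'\delta$, i.e.\ $\eps/\delta\geq (C_P^1 L')^2$, which holds for all small $\eps$ since $\delta_\eps/\eps\to0$; here $L'$ is the sum of the (uniform) Lipschitz constant of $W_\delta(x,\cdot)$ and the Lipschitz constant of $W_{hom}$ on $[-M,M]$. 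A pleasant byproduct is that the minimising-movements regularisation from Lemma~\ref{lemma liminf} is unnecessary here, because the $L^1$-Poincar\'e inequality controls cell oscillations directly without first having to pass from $u_\eps$ to a more regular competitor.

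Concretely, I would first replace $u$ by $T_Mu$ — all three terms of $\widetilde\F_{\eps,\delta}$ are non-increasing under truncation at $\pm M$, as in the proof of Theorem~\ref{theorem main} — so that \eqref{condition lipschitz} and the compatibility bound \eqref{eq homogenization bound} apply with the cell averages $\langle u\rangle_{Q_i^\delta}\in[-M,M]$ as frozen arguments. Inserting $W_\delta(x,\langle u\rangle_{Q_i^\delta})$, $W_{hom}(\langle u\rangle_{Q_i^\delta})$ and $W_{hom}(u)$ into each cell integral, applying the two Lipschitz bounds and the $L^1$-Poincar\'e inequality as above, and — unlike in the heuristic display above — keeping the non-negative term $\tfrac\eps2\|\nabla u\|^2$, the cell-wise estimates sum to
\[
\widetilde\F_{\eps,\delta_\eps}(u)\ \geq\ \widetilde\F_{\eps,\delta_\eps}(T_Mu)\ \geq\ \F_{\eps,hom}(T_Mu)-C\,\frac{\delta_\eps}{\eps},
\]
where the error $C\delta_\eps/\eps$ is the compatibility bound \eqref{eq homogenization bound} divided by $\eps$, and $\delta_\eps/\eps\to0$. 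From this single inequality both compactness and the $\liminf$-inequality follow from the behaviour of the classical Modica--Mortola functional $\F_{\eps,hom}$: if $\liminf_\eps\widetilde\F_{\eps,\delta_\eps}(u_\eps)<\infty$ then $\F_{\eps,hom}(T_Mu_\eps)$ is bounded along a subsequence, and the $G_{hom}$--$BV$ argument of Step~1 of the proof of Theorem~\ref{theorem main} produces a further subsequence with $T_Mu_\eps\to u$ in $L^1(\Omega)$ and $u\in BV(\Omega;\{-1,1\})$, whereupon $\liminf_\eps\widetilde\F_{\eps,\delta_\eps}(u_\eps)\geq\liminf_\eps\F_{\eps,hom}(T_Mu_\eps)\geq c_{hom}\,\Per_\Omega(\{u=1\})$.

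For the $\limsup$-inequality I would reuse the recovery sequence from the proof of Theorem~\ref{theorem main}: by density of $C^2$ sets it suffices to take $E$ of class $C^2$ and $u^\eps(x)=\phi(sd_E(x)/\eps)$ with $\phi'=\sqrt{2W_{hom}(\phi)}$. That proof already establishes $\F_{\eps,hom}(u^\eps)\to c_{hom}\,\Per_\Omega(\{u=1\})$ and bounds $\tfrac1\eps\sum_i\big|\int_{Q_i^\delta}(W_{\delta_\eps}(x,u^\eps)-W_{hom}(u^\eps))\dx\big|$ by $O(\delta_\eps/\eps)$, so the only new contribution is $\sqrt{\delta_\eps/\eps}\int_\Omega\|\nabla u^\eps\|\dx$; since $\nabla u^\eps=\eps^{-1}\phi'(sd_E/\eps)\nabla sd_E$ with $\|\nabla sd_E\|\equiv1$ near $\partial E$, the coarea change of variables in that proof gives $\int_\Omega\|\nabla u^\eps\|\dx=2\,\H^{d-1}(\partial E)+o(1)=O(1)$, so this term is $O\big(\sqrt{\delta_\eps/\eps}\big)\to0$ and is harmless. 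I do not expect a serious obstacle; the only points needing a little care are making the absorption of the oscillation term quantitative and uniform in $\eps$ (done above) and the usual caveat that, without growth of $W_{hom}$ off $[-1,1]$, control of $u_\eps$ far from the wells is only available after truncation — so the cleanest statement is really in the $d_M$-topology of Theorem~\ref{theorem main}, with the $L^1$-statement holding whenever $W_{hom}$ supplies the customary coercivity, as it does in all examples of Section~\ref{section examples}.
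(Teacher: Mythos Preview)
Your proposal is correct and follows essentially the paper's intended route: sum the cell-wise inequality displayed just before the corollary (retaining, as you do, the term $\tfrac\eps2\|\nabla u\|^2$ that the paper's heuristic display drops) to get $\widetilde\F_{\eps,\delta_\eps}(T_Mu)\geq \F_{\eps,hom}(T_Mu)-C\delta_\eps/\eps$, then invoke the classical Modica--Mortola compactness/$\liminf$, and reuse the recovery sequence of Theorem~\ref{theorem main} for the $\limsup$, where the extra term $\sqrt{\delta_\eps/\eps}\int_\Omega\|\nabla u^\eps\|\dx=O(\sqrt{\delta_\eps/\eps})$ is harmless. Your observation that the minimizing-movements step of Lemma~\ref{lemma liminf} is unnecessary here is correct and makes the argument cleaner than a literal reading of ``by the same proof as above'' might suggest; the $L^1$-Poincar\'e inequality \eqref{eq l1 poincare} absorbs the oscillation term directly, so the proof is closer in spirit to the short Corollary after Lemma~\ref{lemma eps32} than to Theorem~\ref{theorem main} itself.
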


We note that the inclusion of a total variation term in $\widetilde \F_\eps$ negates one of the functional's main advantages over other models: The fact that the energy is quadratic in the highest order derivatives, leading to a semi-linear evolution equation in the gradient flow. In the numerical experiments of Section \ref{section numerical}, we do observe that the discrepancy in the perimeter functional for varying $\delta$ is much more pronounced for potentials $W_\delta$ with shifting wells which become negative than in other settings under consideration.

When $W$ displays more `quadratic' behavior where it becomes negative, no modification to the functional as there is no mismatch between a square norm and a norm.

\begin{remark}[An unphysical example]
We note that the constraint $W_\delta \geq 0$ is sufficient for convergence even if $\eps^{3/2}\ll \delta \ll \eps$, but not necessary.
Let us revisit the setting of Example \ref{example voids} for an example which does not have physical applications to the best of our knowledge, but which demonstrates the extraordinary resilience of the method to various kinds of perturbations for $\delta\ll \eps$. Namely, we are looking at a potential which, on parts of the domain, is pushing $u$ away from the potential wells.

Specifically, consider $W_\delta(x,u) = a(x/\delta)\,\max\{1-u^2, 0\}^2$ where $a$ is a $1$-periodic function on $\R^d$ which changes sign, but has a positive integral. In particular,
\[
W_{hom}(u) = \left(\int_{(0,1)^d} a(x)\dx\right) \,\max\{1-u^2, 0\}^2\geq 0.
\]
We claim that
\[
\int_{Q_i^\delta} \frac\eps2\,\|\nabla u\|^2 + \frac{W_\delta(x, u)}\eps\dx \geq 0
\]
for all cubes $Q_i^\delta$ of side-length $\delta$ with edges parallel to the coordinate axes (at least away from the boundary). By rescaling, the minimization problem is equivalent to the claim that 
\[
\int_{(0,1)^d} \frac{\eps/\delta}2\,\|\nabla u\|^2 + \frac{W(x, u)}{\eps/\delta}\dx \geq 0 \qquad \forall\ u \in H^1\big((0,1)^d\big).
\]
Following the proof of Lemma \ref{lemma liminf}, we only need to establish this if $u$ is $\eps$-Lipschitz continuous on a set of diameter $\delta$, so $u$ takes values close to a single value $\bar u$ (at least away from the boundary). As noted in Remark \ref{remark boundary stuffs}, the boundary can easily be treated separately. Naturally, the expression is positive for $u$ which are uniformly close to $\bar u \not\approx 1$, so we consider the simpler problem of approximating $W$ by its second order Taylor expansion around $1$, i.e.\ we consider
\[
\int_{(0,1)^d} \frac{(\eps/\delta)^2}2\,\|\nabla v\|^2 + 4a(x)\,v^2\dx
\]
where $v = u-1$. This is admissible since to minimize the energy, we can always consider the truncated function $T_1u$ and assume that $u$ only takes values in $[-1,1]$. Write $a(x) = 1 - b(x)$ and note that for $d>2$ we have
\[
\int b\,u^2\dx \leq \left(\int_{(0,1)^d} |b|^\frac d2\dx \right)^\frac2d\left(\int |u|^\frac{2d}{d-2}\dx\right)^\frac{d-2}d \leq C\,\|b\|_{L^{d/2}} \|u\|_{H^1((0,1)^d)}^2
\]
since $H^1$ embeds continuously into $L^{d/2}$. In particular
\begin{align*}
\int_{(0,1)^d} &\frac{(\eps/\delta)^2}2\,\|\nabla v\|^2 + 4a(x)\,v^2\dx\\
	& \geq \int_{(0,1)^d} \frac{(\eps/\delta)^2 - C\,\|b\|_{L^{d/2}((0,1)^d)}}2\,\|\nabla v\|^2 + \big(4 - C\,\|b\|_{L^{d/2}((0,1)^d)}\big)\,v^2\dx\geq 0
\end{align*}
if $\|b\|_{L^{d/2}}$ is small enough. The proof of $\Gamma$-convergence goes through for this class of potentials as before. In dimension $d=2$, the same argument holds using $\|b\|_{L^p}$ for any $p>1$.
\end{remark}

\section{Numerical Illustration}\label{section numerical}

\begin{figure}
\includegraphics[clip = true, trim = 1cm .5cm 1cm 1cm, height=3.2cm]{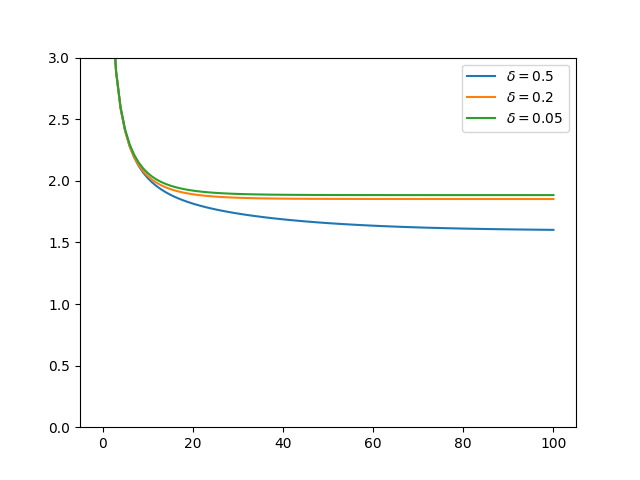}\hfill
\includegraphics[clip = true, trim = 2.4cm 1cm 3.7cm 1cm, height=3.2cm]{./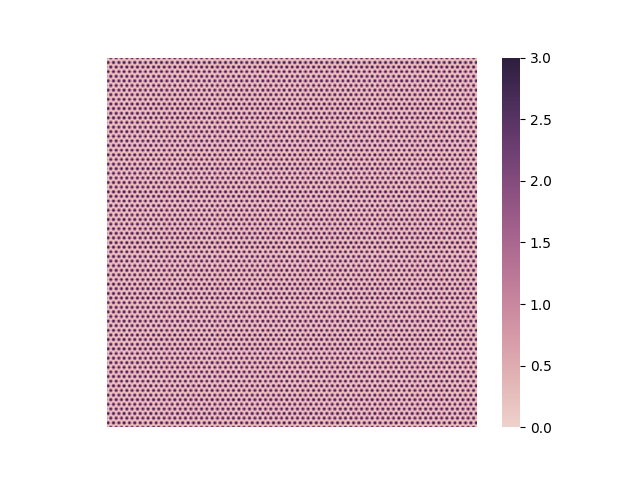}
\includegraphics[clip =  = true, trim = 2.4cm 1cm 3.7cm 1cm, height=3.2cm]{./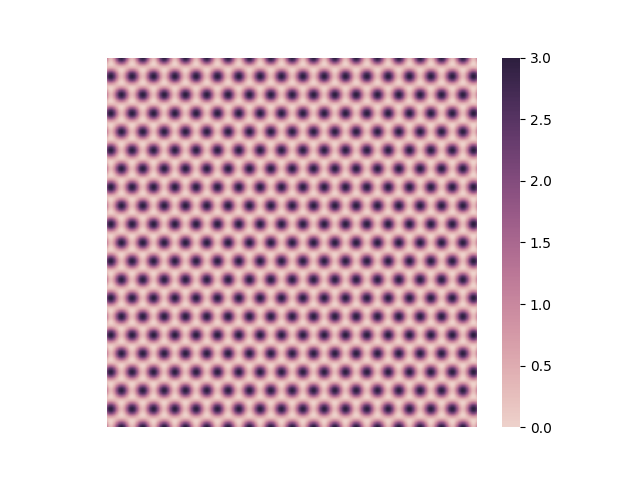}
\includegraphics[clip =  = true, trim = 2.4cm 1cm 1.5cm 1cm, height=3.2cm]{./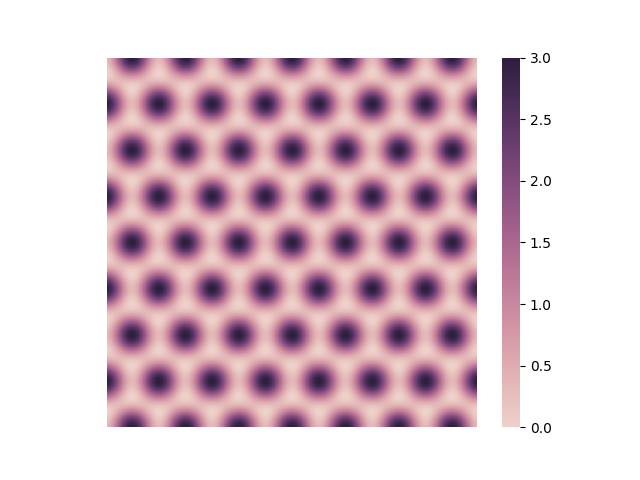}

\hfill\includegraphics[clip = true, trim = 2.4cm 1cm 3.7cm 1cm, height=3.2cm]{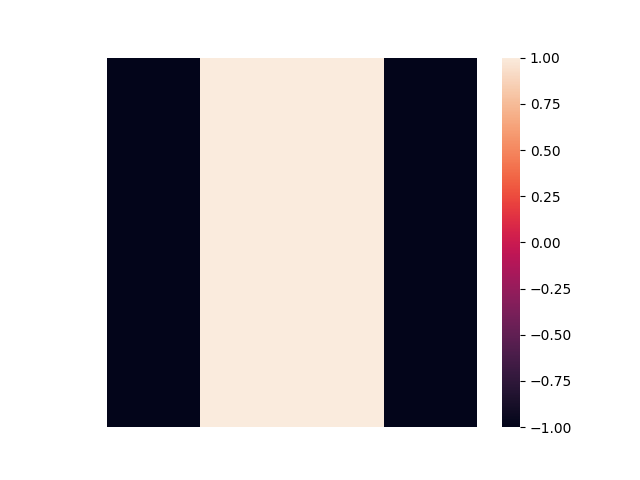}\hfill
\includegraphics[clip = true, trim = 2.4cm 1cm 3.7cm 1cm, height=3.2cm]{./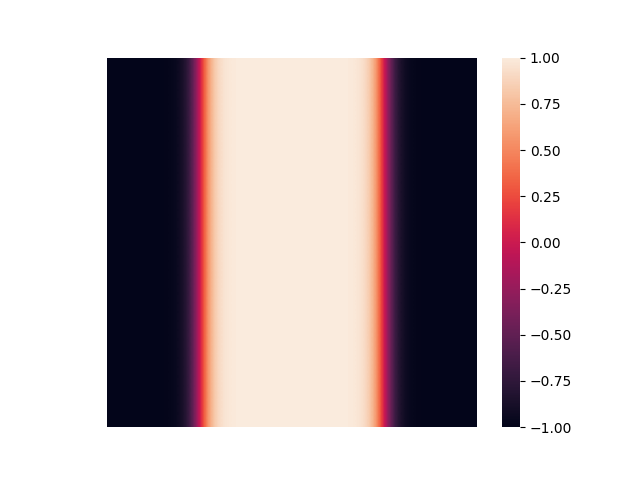}
\includegraphics[clip = true, trim = 2.4cm 1cm 3.7cm 1cm, height=3.2cm]{./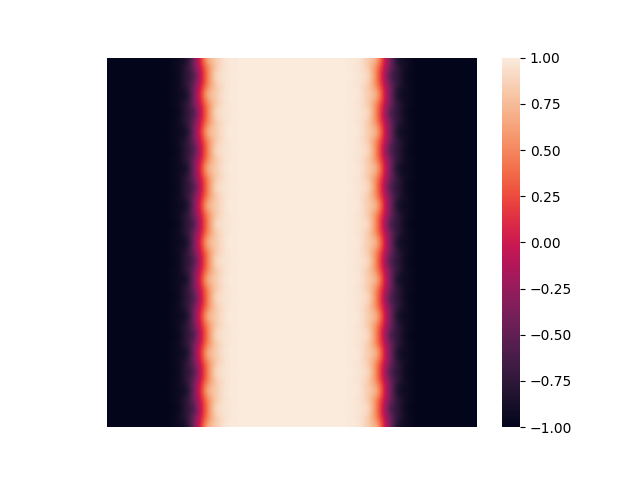}
\includegraphics[clip = true, trim = 2.4cm 1cm 1.5cm 1cm, height=3.2cm]{./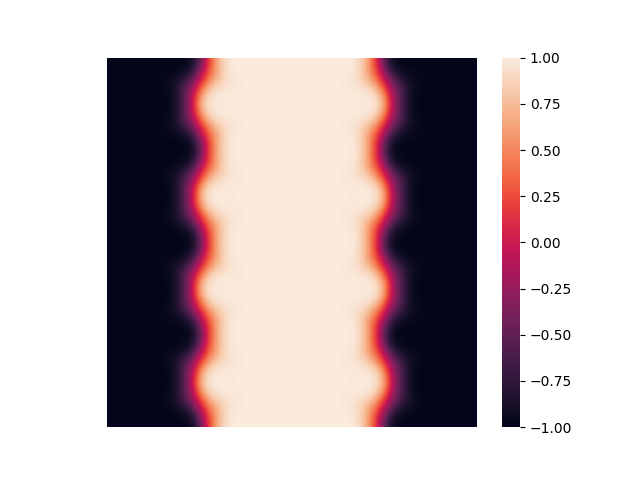}

\caption{\label{figure hexagon symmetry}
Top row: The decay of the spatially inhomogeneous `Modica-Mortola' energy (left) and the spatial factor $w(x/\delta)$ in the double-well potentials for different values of $\delta$ in a hexagonal symmetry setting. Bottom row: The initial condition (left) and the terminal state of our simulations for various values of $\delta$ (corresponds to to top row). We observe that if $\delta$ is large with respect to $\eps$, the interface adapts to the spatial microstructure. If $\eps$ is large with respect to $\delta$, on the other hand, the interfaces remain straight. 
}
\end{figure}

\begin{figure}
\includegraphics[clip = true, trim = 1cm .5cm 1cm 1cm, height=3.2cm]{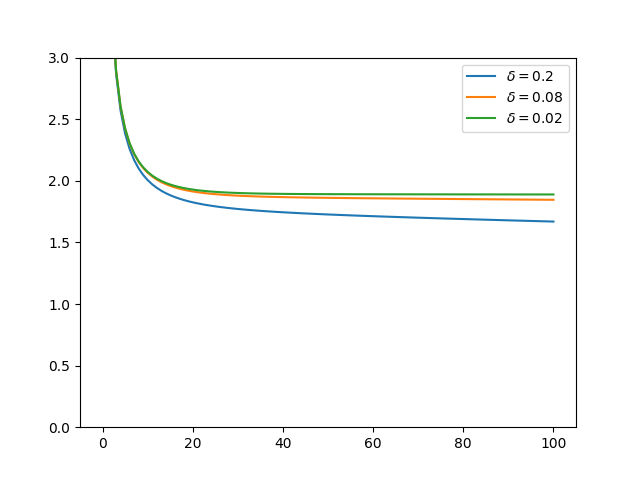}\hfill
\includegraphics[clip = true, trim = 2.4cm 1cm 3.7cm 1cm, height=3.2cm]{./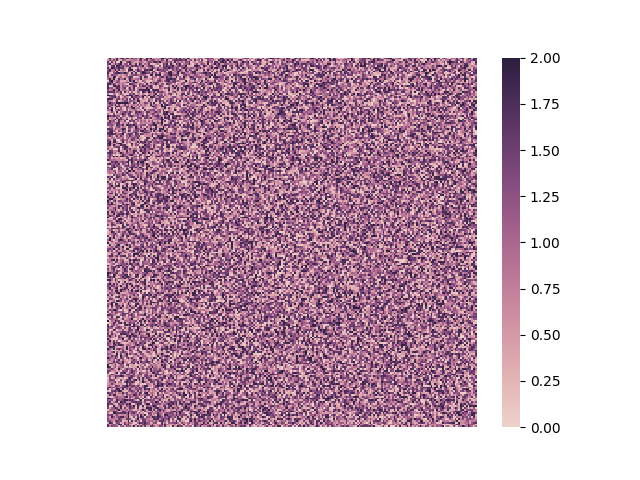}
\includegraphics[clip =  = true, trim = 2.4cm 1cm 3.7cm 1cm, height=3.2cm]{./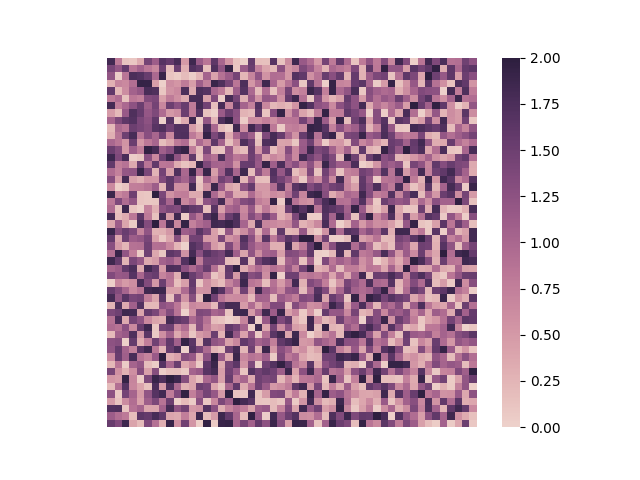}
\includegraphics[clip =  = true, trim = 2.4cm 1cm 1.5cm 1cm, height=3.2cm]{./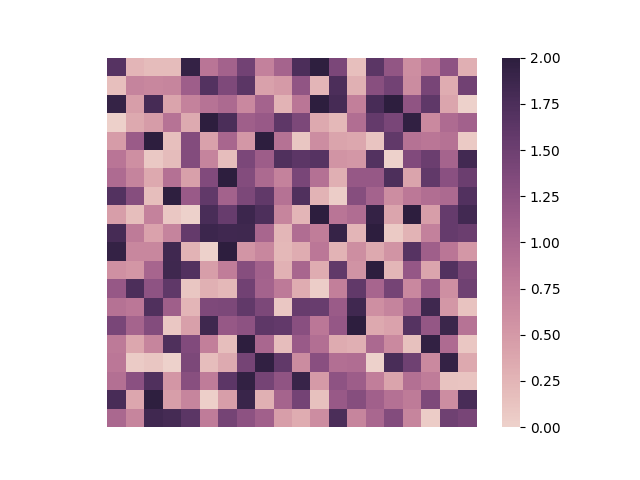}

\hfill\includegraphics[clip = true, trim = 2.4cm 1cm 3.7cm 1cm, height=3.2cm]{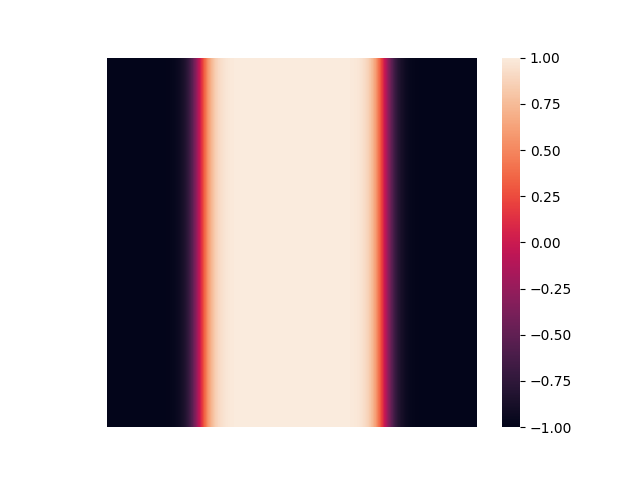}\hfill
\includegraphics[clip = true, trim = 2.4cm 1cm 3.7cm 1cm, height=3.2cm]{./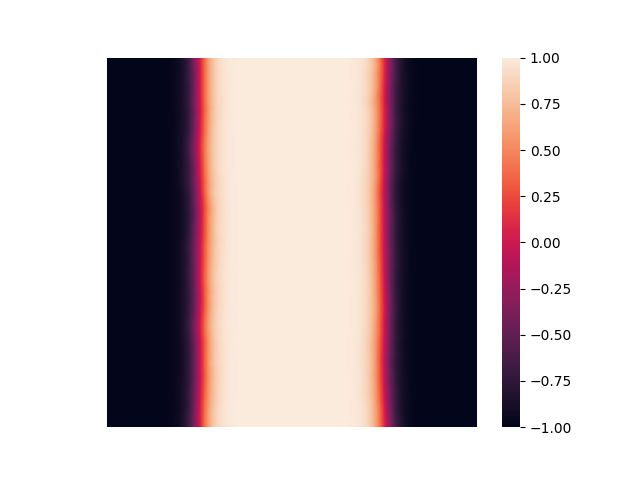}
\includegraphics[clip = true, trim = 2.4cm 1cm 3.7cm 1cm, height=3.2cm]{./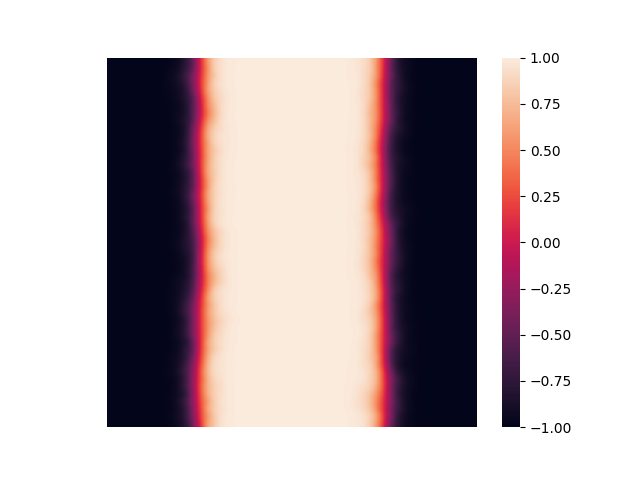}
\includegraphics[clip = true, trim = 2.4cm 1cm 1.5cm 1cm, height=3.2cm]{./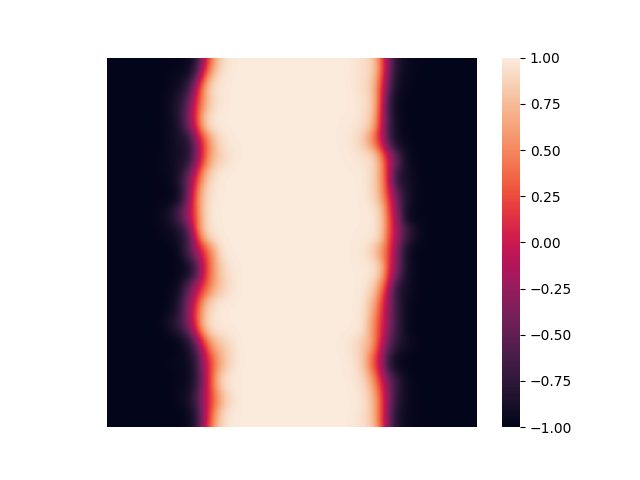}

\caption{\label{figure stochastic}
Top row: The decay of the spatially inhomogeneous `Modica-Mortola' energy (left) and the spatial factor $\sum_j q_{ij}^\delta 1_{Q_{ij}^\delta}$ in the double-well potentials for different values of $\delta$ in a random setting. Bottom row: The terminal state in the same simulation for the homogenized potential (left) and the terminal state of our simulations for various values of $\delta$ (corresponds to to top row).
}
\end{figure}

\begin{figure}
\includegraphics[clip = true, trim = 1cm .5cm 1cm 1cm, height=3.2cm]{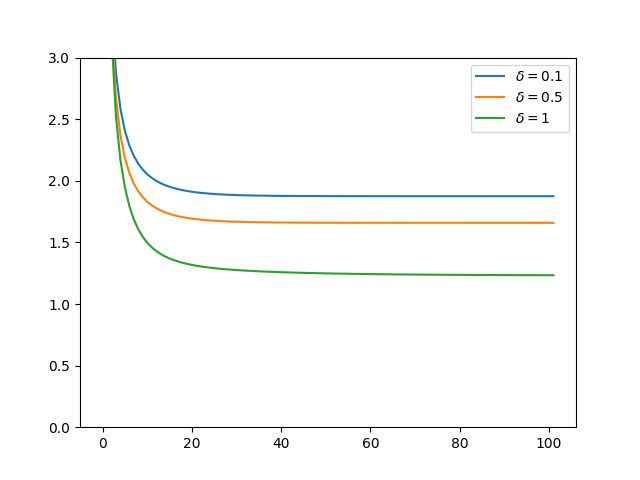}\hfill
\includegraphics[clip = true, trim = 2.4cm 1cm 3.7cm 1cm, height=3.2cm]{./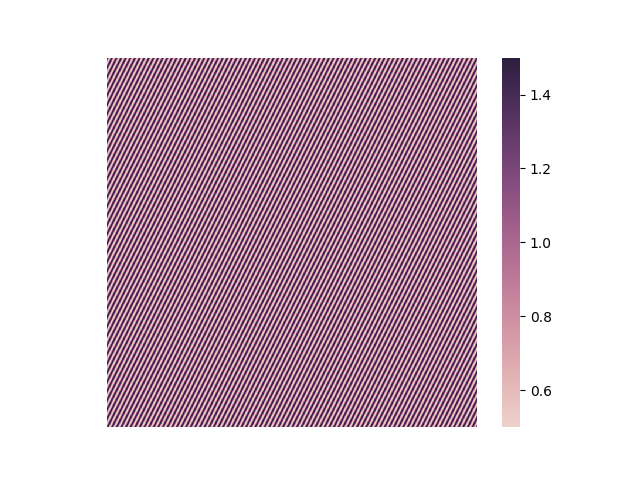}
\includegraphics[clip =  = true, trim = 2.4cm 1cm 3.7cm 1cm, height=3.2cm]{./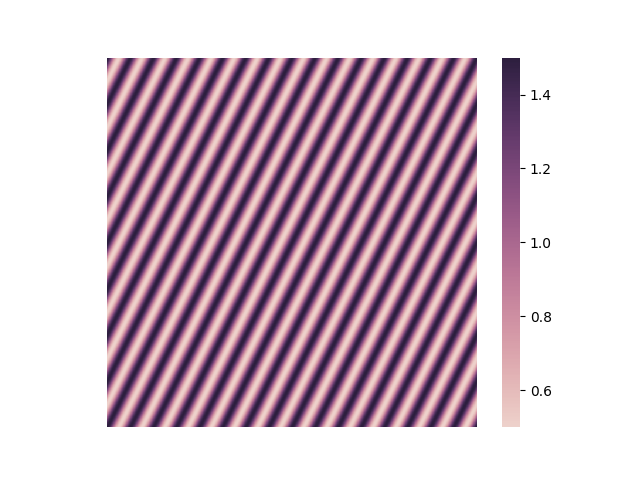}
\includegraphics[clip =  = true, trim = 2.4cm 1cm 1.5cm 1cm, height=3.2cm]{./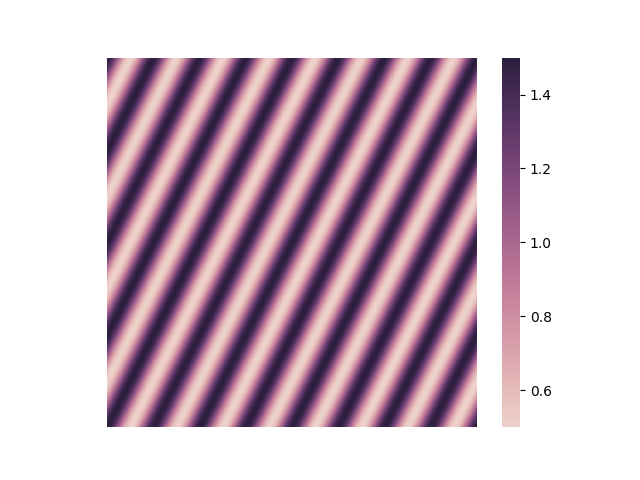}

\includegraphics[clip = true, trim = 5mm 5mm 1cm 5mm, height=3.2cm]{./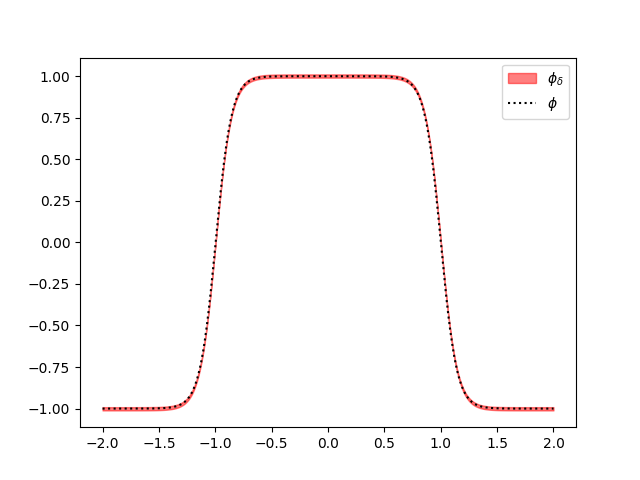}\hfill
\includegraphics[clip = true, trim = 2.4cm 1cm 3.7cm 1cm, height=3.2cm]{./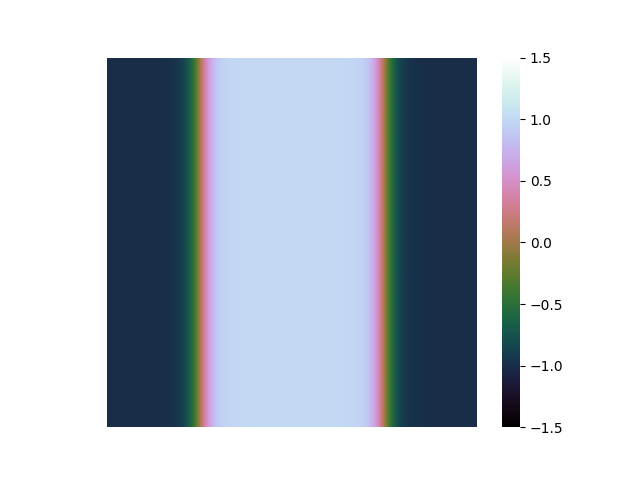}
\includegraphics[clip = true, trim = 2.4cm 1cm 3.7cm 1cm, height=3.2cm]{./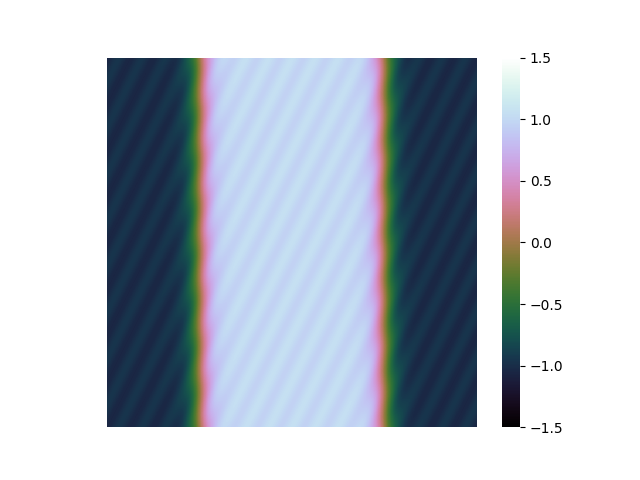}
\includegraphics[clip = true, trim = 2.4cm 1cm 1.5cm 1cm, height=3.2cm]{./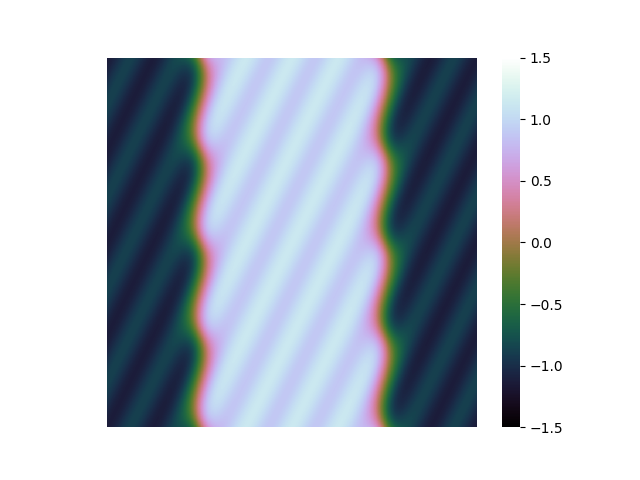}

\caption{\label{figure varying wells}
Top row: The decay of the spatially inhomogeneous `Modica-Mortola' energy (left) and the spatially varying wells at $\sqrt{b(x/\delta)}$ in the double-well potentials for different values of $\delta$ (plotting $b$). Bottom row: The left image shows the area between $\min_y u(x,y)$ and $\max_yu(x,y)$ for $\delta = 0.2$ (shaded red) compared to the solution for $W_{hom}$ (dotted). Right three images: The terminal state of our simulations for various values of $\delta$ (corresponds to to top row). We selected a different color palette in this figure to emphasize that $u$ takes values outside of $(-1,1)$ here.\\
Note that in agreement with our analysis, the discrepancy between terminal state energies for moderately small $\delta$ is much bigger in this setting than for potentials which are non-negative.
}
\end{figure}

{\bf Experiment design.} We numerically approximate the gradient flow of the phase-field energy 
\begin{equation}\label{eq energy numerical}
\F_{\eps,\delta}(u) = \avint_\Omega \frac\eps2 \,\|\nabla u\|^2 + \frac{W_\delta(x,u)}\eps \dx
\end{equation}
with $\eps = 0.025$ for various values of $\delta$ and various different potentials $W_\delta$:

\begin{enumerate}
\item In Figure \ref{figure hexagon symmetry}, we consider a periodic potential with hexagonal symmetry:
\[
W_\delta(x,u) = w(x/\delta)\,\frac{(u^2-1)^2}4, \qquad \widetilde w(x) = a + b \sum_{i=1}^3 \sin(\pi \cdot z_i)^2
\]
where
\[
a = 0.228, \qquad b = -0.1, \qquad z_1 = x_1, \qquad z_2 = \frac{x_1 + \sqrt{3}\,x_2}2, \qquad z_3 = \frac{x_1 - \sqrt{3}\,x_2}2
\]
and $w= \tilde w/\langle \tilde w\rangle$ is normalized such that the homogenized potential is the usual double-well potential $W_{hom}(u) = (u^2-1)^2/4$.

\item In Figure \ref{figure stochastic}, we consider a random potential:
\[
W_\delta(x,u) = \frac{(u^2-1)^2}4\sum_{i,j} q^\delta_{ij}\,1_{x\in Q_{ij}^\delta}
\]
where the $Q_{ij}^\delta$ form a tesselation of the spatial domain by squares of side-length $\delta$ and the coefficients $q^\delta_{ij}$ are independent uniformly distributed random variables in the interval $[0,2]$. 

\item In Figure \ref{figure varying wells}, we select a potential with spatially varying wells
\[
W_\delta(x,u) = \frac{\big(u^2- b(x/\delta)\big)^2 + c(x/\delta)}4, \qquad b(x) =  1 + 0.5\cdot \cos(2\pi\cdot(x_1+2x_2) 
\]
and $c(x) = 1 -a^2(x)$.

\item In Figure \ref{figure exponent}, we consider the periodic potential
\[
W_\delta(x,u) = \big|u^2-1\big|^{p(x/\delta)}, \qquad p(x) = 1.5 + 8.5\cdot \cos^2\big(2\pi(x_2-\sin(2\pi x_1))\big).
\]
\end{enumerate}

The fourth experiment is the only one in which we do not know the homogenized potential. Due to the lack of a separable structure, the homogenized potential is harder to compute in this setting --  in all other cases, it is $(u^2-1)^2/4$.  For $W_{hom}(u) = (u^2-1)^2/4$, we know the optimal transition shape $\phi(x) = \tanh(x/\sqrt 2)$. We observe that $\tanh(1.5/\sqrt 2) \approx 0.79$ and $\tanh(3/\sqrt 2) \approx 0.97$. It is therefore reasonable to argue that the transition length scale to which $\delta$ should be compared lies somewhere between $3\eps$ and $6\eps$, i.e.\ between $0.075$ and $0.15$. 

The domain is $\Omega = (-2,2)^2$ with periodic boundary conditions (or equivalently, $\Omega= \R^2/ (4\Z+2)$). The first potential fails to be perfectly periodic at the boundary, but we do not observe significant effects here, and the theoretical results still apply to this situation. All other potentials are naturally defined on the torus.

\begin{figure}
%

\includegraphics[clip = true, trim = 1cm .5cm 1cm 1cm, height=3.2cm]{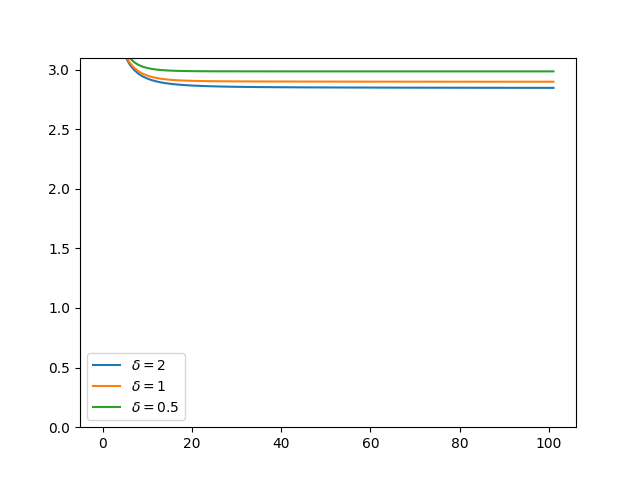}\hfill
\includegraphics[clip = true, trim = 2.4cm 1cm 3.7cm 1cm, height=3.2cm]{./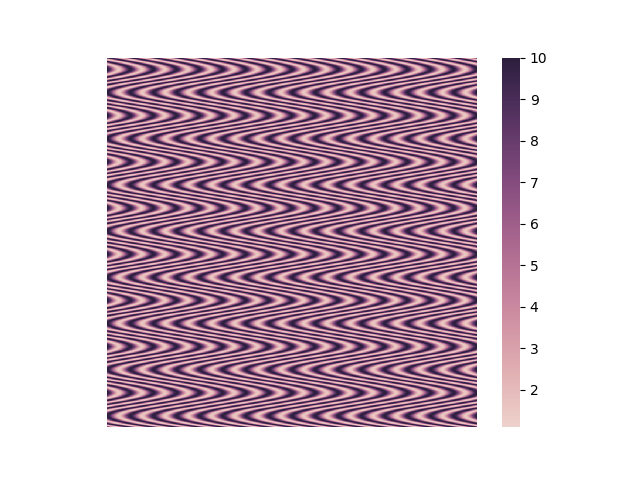}
\includegraphics[clip =  = true, trim = 2.4cm 1cm 3.7cm 1cm, height=3.2cm]{./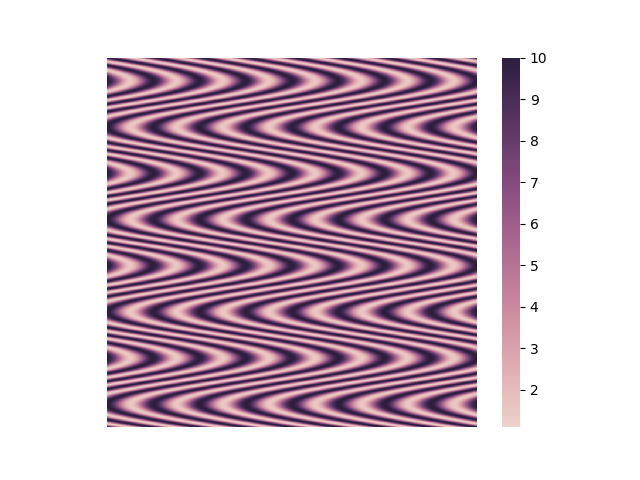}
\includegraphics[clip =  = true, trim = 2.4cm 1cm 1.5cm 1cm, height=3.2cm]{./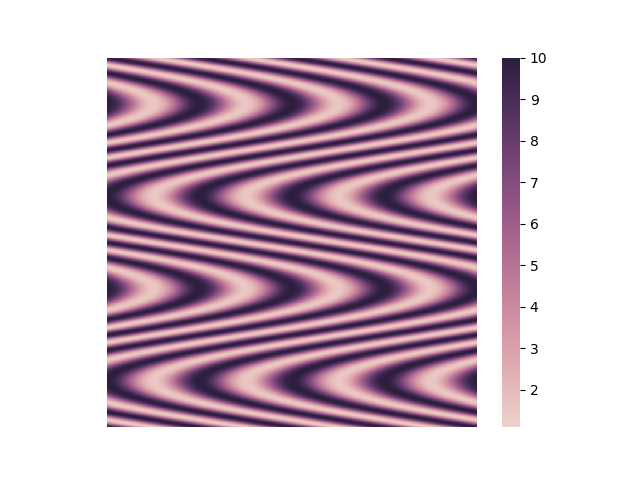}

\includegraphics[clip=true, trim = .8cm 0 0cm 0, height=3.2cm]{./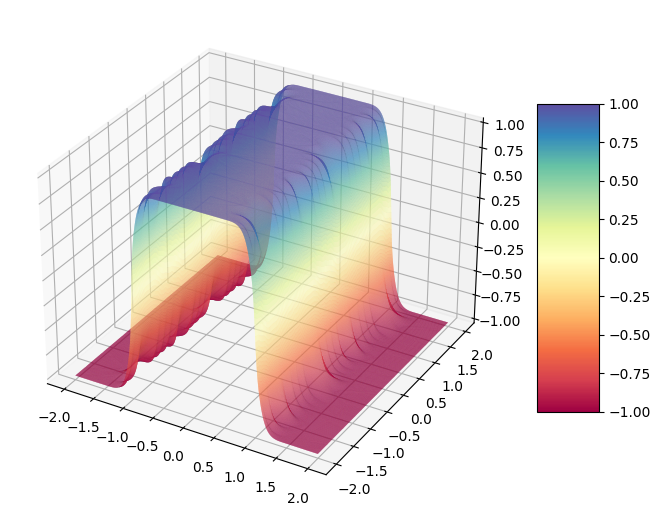}
\hfill\includegraphics[clip = true, trim = 2.4cm 1cm 3.7cm 1cm, height=3.2cm]{./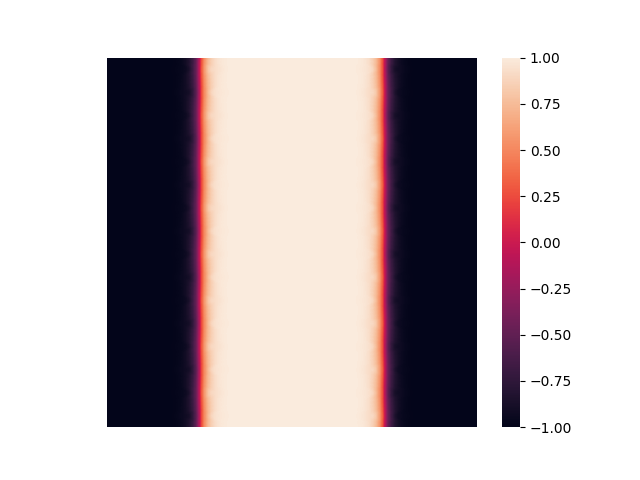}
\includegraphics[clip = true, trim = 2.4cm 1cm 3.7cm 1cm, height=3.2cm]{./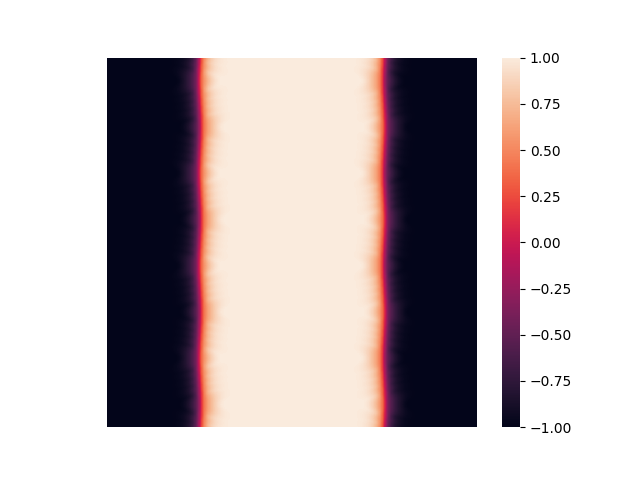}
\includegraphics[clip = true, trim = 2.4cm 1cm 1.5cm 1cm, height=3.2cm]{./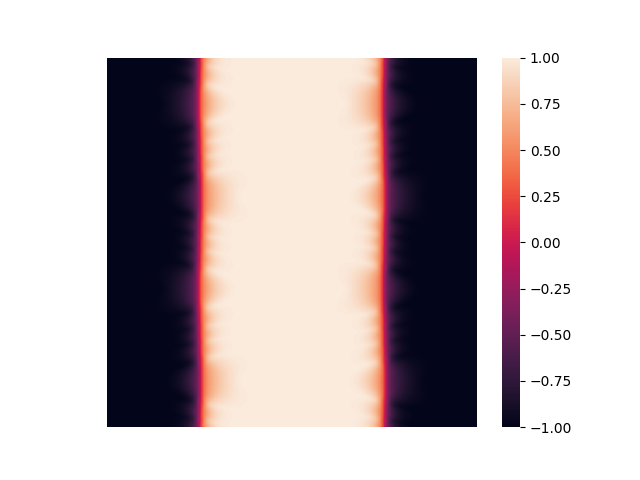}

\caption{\label{figure exponent}
Top row: The decay of the spatially inhomogeneous `Modica-Mortola' energy (left) and the spatially varying exponent $p(x/\delta)$ of the double-well potential for different values of $\delta$ in a `snake' setting. Bottom row: The terminal state in the same simulation for the homogenized potential and the terminal state of our simulations for various values of $\delta$ (corresponds to to top row). The right image is a different visualization of the rightmost contour plot.}
\end{figure}

{\bf Implementation.} In the simulation, we take 100 time steps of size $\tau = 10^{-3}$, i.e.\ following the gradient flow
\[
\partial_t u = \eps\,\Delta u - \frac{\partial_uW_\delta(x,u)}\eps
\] 
until time $t=0.1$. As the time-stepping scheme, we choose a Fourier space discretization
\[
\big(1+4\pi^2\tau\eps\,|\xi|^2\big)\,\F u_{t+1}(\xi) = \F\big(u_t - \tau\,\partial_uW_\delta(u_t, x) / \eps\big)(\xi)
\]
which treats the Laplacian implicitly and the double-well potential explicitly. The Fourier transform is computed as a fast Fourier transform on a $n\times n$ grid with $n^2=1,000,000$ nodes. The initial condition is the characteristic function $u_0 = 1_{\{(x,y) : -1< x<1\}}$ on the  periodic square $\Omega = (-2,2)^2$.

\begin{figure}
\begin{flushleft}
\includegraphics[clip=true, trim = .8cm 0 3.2cm 0, height=4.6cm]{./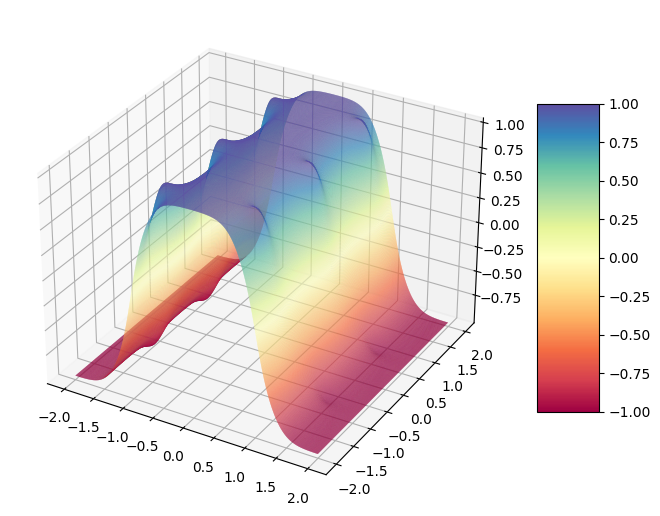}
\includegraphics[clip=true, trim = .8cm 0 3.2cm 0, height=4.6cm]{./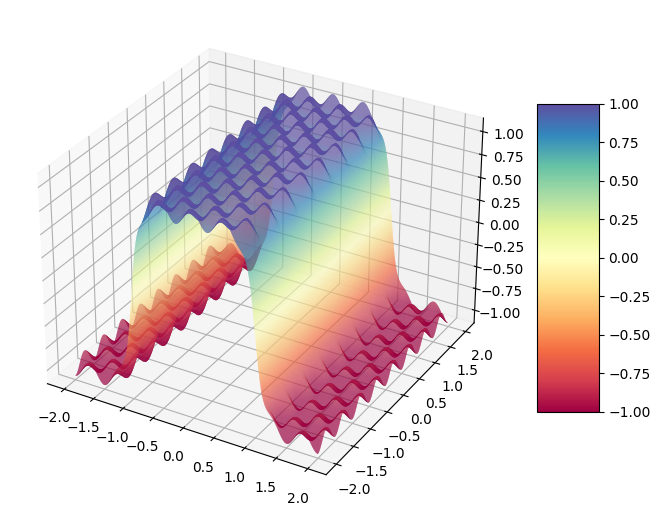}
\includegraphics[clip=true, trim = .8cm 0 .2cm 0, height=4.6cm]{./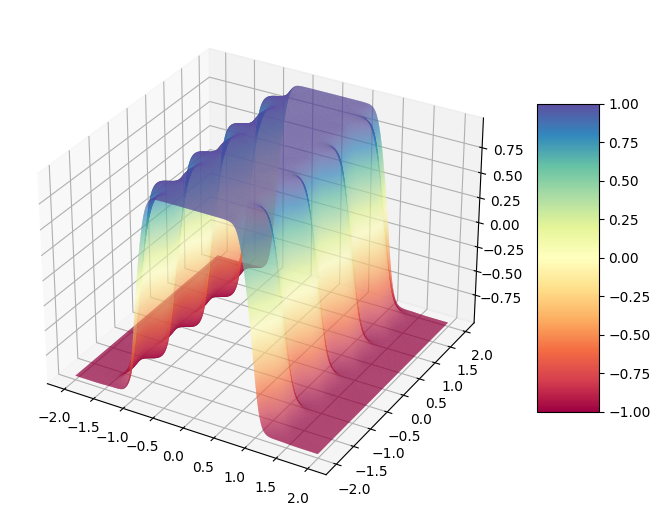}

\vspace{1mm}

\includegraphics[clip=true, trim = 1cm 0 1cm 1cm, width =4.48cm]{./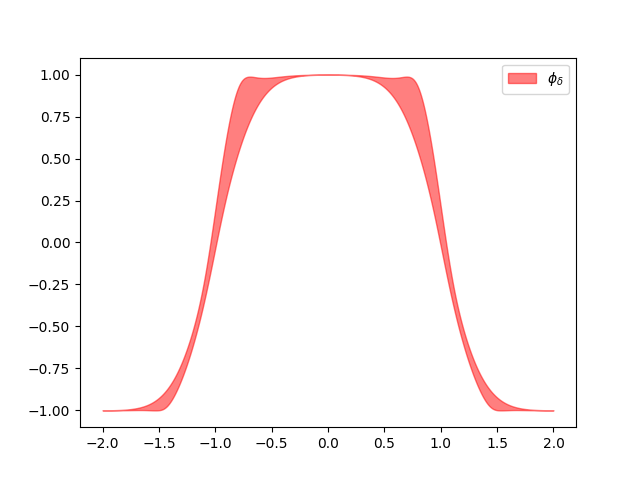}
\includegraphics[clip=true, trim = 1cm 0 1cm 1cm, width =4.48cm]{./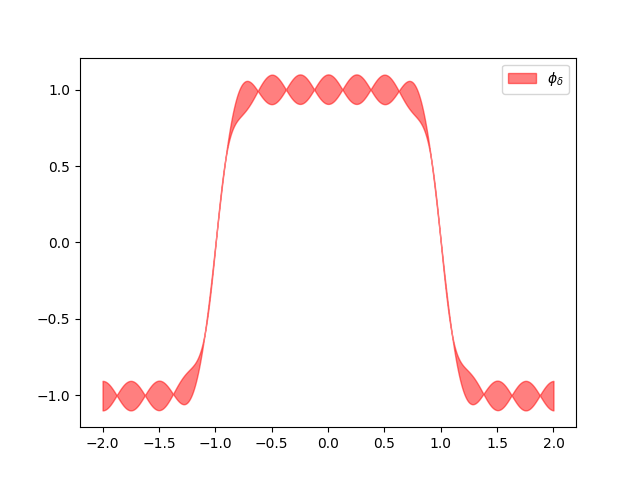}
\includegraphics[clip=true, trim = 1cm 0 1cm 1cm, width =4.48cm]{./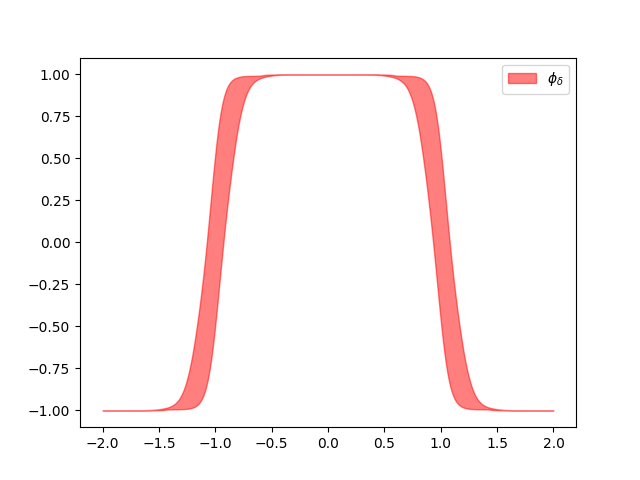}

\vspace{1mm}

\includegraphics[clip=true, trim = 2cm 5mm 2cm 1cm, width =4.48cm]{./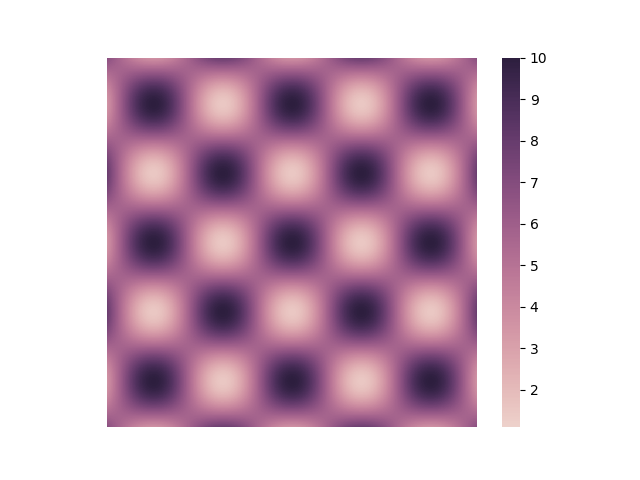}
\includegraphics[clip=true, trim = 2cm 5mm 2cm 1cm, width =4.48cm]{./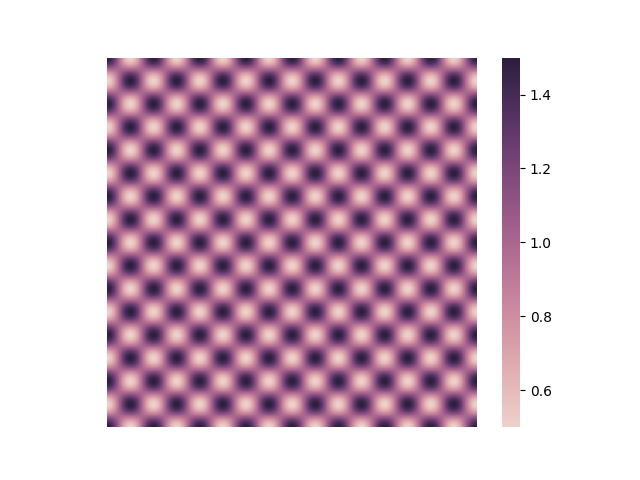}
\includegraphics[clip=true, trim = 2cm 5mm 2cm 1cm, width =4.48cm]{./hexweight_potential_delta_0.5.png}

\end{flushleft}
\caption{\label{pretty figure}
In this figure, we demonstrate that there are several different aspects of spatial inhomogeneity which disappear as we send $\delta\to 0$ for fixed $\eps$. The first row is always the terminal state $u$ of our simulation and the second row is always the space between the highest and lowest value that $u$ takes in $y$-direction for fixed $x$. In the left column, we consider a potential with spatially varying exponent $p(x)$ (plotted in the third row). In the second row, we observe a potential with spatially varying wells $a(x) \in [1/2, 3/2]$ (plotted in the third row). The potential in the third column is a product $W_{hom}(u) \cdot a(x)$ with a factor $a(x)\in [0,3]$ (plotted in the third row).\\
We find that (with a geometry parallel to the interface) the varying exponent barely affects the location of the interface,	 but has a large impact on its shape and steepness: The three `ridges' where $u$ makes a rapid transition coincide with the three places along the transition where the exponent $p(x)$ is close to $1$. On the opposite extreme end, the hexagonal potential shifts the interface to the region where $a(x)\approx 0$ to decrease length, but retains essentially the same transition shape and width everywhere. For the potential with varying wells, the interface is essentially unaffected and the effects are mostly noticeable in pure phase. This agrees with the results of \cite{cristoferi2023homogenization_delta_large}, who characterize the first order contribution to the $\Gamma$-expansion as a bulk integral (and the zeroth order contribution as an interface energy).
}
\end{figure}

{\bf Results and observations.} In all simulations, the gradient flow rapidly relaxes the jump in the initial condition to a finite energy transition on a length scale $\sim\eps$. For larger $\delta$, the energy decreases to lower values by exploiting the spatial microstructure of the potential. If $\delta$ is sufficiently small with respect to the transition length, then the interfaces are essentially straight and look like the transition of the homogenized potential between $-1$ and $1$, even if the wells of $W_\delta$ are not at $\pm 1$.
As $\delta\to 0$ and for small $\eps$, the energy approaches $2\cdot \frac{2\sqrt 2}3 = \int_{-1}^1 \sqrt{2\,W_{hom}(u)} \d u \approx 1.89$ in all experiments where $W_{hom}(u) = (u^2-1)^2/4$. This is the correct length for two opposing interfaces on the torus as we are using {\em average integrals} in the energy \eqref{eq energy numerical} rather than unnormalized integrals.

For potentials $W$ which only depend on the phase parameter $u$, it is well-understood that the gradient flow `relaxes' $u$ from an initial condition $u_0:\Omega\to(-1,1)$ to a function which take values mostly in the potential wells $\pm 1$ and transitions on a length scale $\eps$ on a fast time scale $\sim \eps|\log\eps|$ where the `ODE' $\dot u = -W'(u)$ dominates the dynamics. On the `slow' time-scale $\sim1/\eps$, the interface between the phases $\{u\approx 1\}$ and $\{u\approx-1\}$ moves by mean curvature flow (MCF) \cite{ilmanen1993convergence, mugnai2011convergence, fischer2020convergence}. In one dimension -- or for interfaces along parallel straight lines in two dimensions -- the dynamics become exponentially slow in $1/\eps$ \cite{carr1989metastable, fusco1989slow, bronsard_kohn} and indeed, also for potentials with spatial inhomogeneity, two parallel lines appear to correspond to a meta-stable state, especially if $\delta$ is small.

There are several different mechanisms of inhomogeneity which all disappear as $\delta/\eps\to 0^+$, including the shape and width of the transition between the potential wells, the geometry of the level set $\{u=0\}$, and the behavior in pure phase. We visualize this observation in Figure \ref{pretty figure}. 
Notably, the homogenization of potentials with varying wells requires $\delta$ to be much smaller than $\eps$, as predicted by the theoretical analysis.

\newpage
\bibliographystyle{alpha}
\bibliography{./mm.bib}

\end{document}